
\documentclass{article}
\usepackage[francais,english]{babel} 
\usepackage{aeguill} 

\usepackage[margin=35mm]{geometry}
\usepackage{amsthm,amsmath,amssymb}
\usepackage{mathtools}  
\usepackage{url}
\usepackage{tikz}
\usepackage{pgfplots}
\usetikzlibrary{calc,matrix,shapes,arrows,positioning,shadows,backgrounds}
\newcommand{\deptx}{0.37}
\newcommand{\depty}{0.37}
\newcommand{\eps}{0.02}
\definecolor {gr}{rgb}{0.7,0.7,0.7}
\newcommand{\cube}{
\rotatebox{0}{
\raisebox{-0.1em}{
$\begin{tikzpicture}[xscale=0.2,yscale=0.2,anchor=base]
\draw[color=gr](0+\deptx+\eps,0+\depty+\eps)--(1+\deptx,0+\depty);
\draw[color=gr](0+\deptx,1+\depty)--(0+\deptx+\eps,0+\depty+\eps);
\draw[color=gr](0+\eps,0+\eps)--(0+\deptx+\eps,0+\depty+\eps);
\draw(0+\eps,0+\eps)--(1,0)--(1+\deptx,0+\depty)--(1+\deptx-\eps,1+\depty-\eps)--(0+\deptx,1+\depty)--(0,1)--cycle;
\draw(0,1)--(1-\eps,1-\eps);
\draw(1,0)--(1-\eps,1-\eps);
\draw(1+\deptx-\eps,1+\depty-\eps)--(1-\eps,1-\eps);
\end{tikzpicture}$
}} }


\usepackage{xcolor}
\usepackage{hyperref}



\newtheorem{theorem}{Theorem}[section]
\newtheorem{proposition}[theorem]{Proposition}
\newtheorem{corollary}[theorem]{Corollary}
\newtheorem{lemma}[theorem]{Lemma}

\theoremstyle{remark}
\newtheorem{remark}{Remark}

\newcommand{\tL}{\mathtt 1}                            
\newcommand{\tO}{\mathtt 0}                            

\newcommand{\digitsum}{\mathsf s}
\newcommand{\smallspace}{\hspace{0.5pt}} 
\newcommand{\digit}{\delta}
\newcommand{\ijkl}{j}
\newcommand{\eqdef}{\coloneqq}

\newcommand{\nL}{n_{\mathtt{1}} }
\newcommand{\nA}{n_{\mathrm A} }

\newcommand{\nO}{n_{\mathtt{0}} }

\newcommand{\nLO}{n_{\mathtt{10}} }
\newcommand{\nLL}{n_{\mathtt{11}} }
\newcommand{\mLL}{m_{\mathtt{11}} }
\newcommand{\JLO}{J_{\mathtt{10}} }
\newcommand{\JLL}{J_{\mathtt{11}} }
\newcommand{\JLLm}{J_{\mathtt{11}} }

\newcommand{\nOO}{n_{\mathtt{00}} }
\newcommand{\nOL}{n_{\mathtt{01}} }
\newcommand{\JOO}{J_{\mathtt{00}} }
\newcommand{\JOL}{J_{\mathtt{01}} }
\newcommand{\JL}{J_{\mathtt{1}} }
\newcommand{\JO}{J_{\mathtt{0}} }

\newcommand{\Iold}{\mathbb N\setminus[u,\lambda)}
\newcommand{\Inew}{I}
\newcommand{\partition}{\mathfrak P}

\newcommand{\sO}{s_{\mathtt 0}}
\newcommand{\sL}{s_{\mathtt 1}}
\newcommand{\ts}{\underline s}
\newcommand{\ThueMorse}{\mathsf t}

\newcommand{\hO}{h_{\mathtt{0}} }

\newcommand{\nAprime}{m_{\mathtt{0}}}
\newcommand{\nBprime}{m_{\mathtt{1}}}
\newcommand{\indicator}{\mathbf 1}
\newcommand{\multiple}{\mathfrak m}
\newcommand{\tmultiple}{\tilde{\mathfrak m}}

\newcommand{\Qtilde}{\tilde\alpha}
\newcommand{\Qprime}{\alpha}

\newcommand{\Szero}{S_{\mathsf 0}}
\newcommand{\Sone}{S_{\mathsf 1}}
\newcommand{\Stwo}{S_{\mathsf 2}}
\newcommand{\Sthree}{S_{\mathsf 3}}
\newcommand{\Sfour}{S_{\mathsf 4}}
\newcommand{\Sfive}{S_{\mathsf 5}}
\newcommand{\Ssix}{S_{\mathsf 6}}
\newcommand{\Sseven}{S_{\mathsf 7}}
\newcommand{\Seight}{S_{\mathsf 8}}
\newcommand{\Snine}{S_{\mathsf 9}}

\newcommand{\Ezero}{E_{\mathsf 0}}
\newcommand{\Eone}{E_{\mathsf 1}}
\newcommand{\Etwo}{E_{\mathsf 2}}
\newcommand{\Ethree}{E_{\mathsf 3}}
\newcommand{\Efourm}{E'_{\mathsf 4}}
\newcommand{\Efour}{E_{\mathsf 4}}
\newcommand{\Efive}{E_{\mathsf 5}}
\newcommand{\Esix}{E_{\mathsf 6}}
\newcommand{\Eseven}{E_{\mathsf 7}}
\newcommand{\Eeight}{E_{\mathsf 8}}
\newcommand{\Enine}{E_{\mathsf 9}}
\newcommand{\Eten}{E_{\mathsf {10}} }
\newcommand{\Eeleven}{E_{\mathsf {11}} }
\newcommand{\Etwelve}{E_{\mathsf {12}} }
\newcommand{\Ethirteen}{E_{\mathsf {13}} }
\newcommand{\Efourteen}{E_{\mathsf {14}} }

\newcommand{\kappaO}{{\kappa_{\mathtt 0}} }

\newcommand{\etaO}{\eta_{\mathtt 0}}
\newcommand{\etaL}{\eta_{\mathtt 1}}
\newcommand{\GetaO}{\mathcal G_{\mathtt 0}}
\newcommand{\GetaL}{\mathcal G_{\mathtt 1}}
\newcommand{\Geta}{\mathcal G}

\newcommand\namehighlight[1]{{\sc #1}}
\newcommand\SX{S_{\mathsf{X}}}
\newcommand\SY{S_{\mathsf{Y}}}
\newcommand\SZ{S_{\mathsf{Z}}}

\DeclareMathOperator{\logp}{\log^+\!} 
\DeclareMathOperator{\e}{\mathrm{e}}                   
\DeclareMathOperator{\LandauO}{\mathcal O}             

\title{
Thue--Morse along the sequence of cubes
}
\author{
Lukas Spiegelhofer\footnote{The author acknowledges support by the
ANR (Agence Nationale de la Recherche, France)
and the FWF (Austrian Science Fund):
joint project \emph{ArithRand} (grant numbers ANR-20-CE91-0006 (ANR) and I4945-N (FWF)),
and project P36137-N (FWF).}\\
Montanuniversit\"at Leoben, Austria\\
}
\date{}
\pagestyle{headings}

\begin{document}
\maketitle

\begin{abstract}
The Thue--Morse sequence $\mathsf t=\mathtt{01101001}\cdots$
is an \emph{automatic sequence} over the alphabet $\{\mathtt 0,\mathtt 1\}$.
It can be defined as the binary sum-of-digits function $\mathsf s:\mathbb N\rightarrow\mathbb N$, reduced modulo~$2$,
or by using the substitution $\mathtt 0\mapsto\mathtt{01}$, $\mathtt1\mapsto\mathtt{10}$.
We prove that the asymptotic density of the set of natural numbers $n$ satisfying $\mathsf t(n^3)=\mathtt 0$ equals $1/2$.
Comparable results, featuring asymptotic equivalence along a polynomial as in our theorem, were previously only known for the linear case~[A.~O.~Gelfond, Acta Arith. 13 (1967/68), 259--265], and for the sequence of squares.
The main theorem in~[C.~Mauduit and J.~Rivat, Acta Math. 203 (2009), no. 1, 107--148] was the first such result for the sequence of squares.

Concerning the sum-of-digits function along polynomials $p$ of degree at least three, previous results were restricted either to lower bounds (such as for the numbers $\#\{n<N:\mathsf t(p(n))=\mathtt 0\}$), or to sum-of-digits functions in ``sufficiently large bases''.
By proving an asymptotic equivalence for the case of the Thue--Morse sequence, and a cubic polynomial,
we move one step closer to the solution of the third \emph{Gelfond problem} on the sum-of-digits function (1967/1968), op. cit.
\end{abstract}

\tableofcontents
\renewcommand{\thefootnote}{\fnsymbol{footnote}} 
\footnotetext{\emph{2020 Mathematics Subject Classification.} Primary: 11A63, 11K16; Secondary: 05A16}
\footnotetext{\emph{Key words and phrases.} sum of digits, Gelfond problems, normal numbers}
\renewcommand{\thefootnote}{\arabic{footnote}}

\section{Introduction}\label{sec_intro}
The {\sc Thue--Morse} sequence $\mathsf{t}$ over the alphabet $\{\tO,\tL\}$
is the unique fixed point of the substitution defined by
\[\tO\mapsto\tO\tL,\quad\tL\mapsto\tL\tO\]
that starts with $\tO$:
\[\mathsf{t}=
\tO\tL\tL\tO\tL\tO\tO\tL
\tL\tO\tO\tL\tO\tL\tL\tO
\tL\tO\tO\tL\tO\tL\tL\tO
\tO\tL\tL\tO\tL\tO\tO\tL
\cdots.\]
In the present paper, we prove that the subsequence $n\mapsto \mathsf{t}\bigl(n^3\bigr)$ attains each of its two values with asymptotic frequency $1/2$, more precisely,
\begin{equation*} 
\lim_{N\rightarrow\infty}
\frac1N
\#\bigl\{
n\in\{0,\ldots,N-1\}:\mathsf{t}\bigl(n^3\bigr)=\tO
\bigr\}
=1/2.
\end{equation*}

Being an \emph{automatic sequence}~\cite{AlloucheShallit1999,AlloucheShallit2003}, the Thue--Morse sequence it is closely linked to the base-$q$ expansion of integers, where $q=2$.
Assume that $q\geq2$ is an integer.
Every $n\in\mathbb N$ can be written in a unique way as
\begin{equation*} 
n=\sum_{\ijkl=0}^{\ell-1} \digit_\ijkl q^\ijkl,
\end{equation*}
where $\ell\geq 0$ is an integer and
$\bigl(\delta_0,\ldots,\delta_{\ell-1}\bigr)\in\{0,\ldots,q-1\}^{\ell}$ (the \emph{base-$q$ expansion of $n$}),
and either $\ell=0$ or $\delta_{\ell-1}\neq 0$.
Due to this uniqueness, functions $n\mapsto \delta_\ijkl(n)$
and $n\mapsto\ell(n)$ are defined (reusing notation),
and we call $\delta_\ijkl(n)$ the
\emph{digit} of $n$ (in base $q$) \emph{at the index} $\ijkl$, and $\ell(n)$ the \emph{length of the base-$q$ expansion} of $n$.
For simplicity of notation, we also set $\delta_\ijkl(n)=0$ for $\ijkl\geq \ell(n)$.
Note that the base-$q$ expansion of $0$ is the empty string, and $\ell(0)=0$.

We define
\begin{equation*} 
\digitsum_q(n)\eqdef\sum_{\ijkl=0}^{\ell(n)-1}\digit_\ijkl(n)
=\sum_{\ijkl\geq 0}\digit_\ijkl(n).
\end{equation*}
This number is the
\emph{sum of digits of $n$ in base $q$},
and $\digitsum_q$ is the
\emph{sum-of-digits function in base $q$}.

For prime numbers $p$, The function $\digitsum_p(n)$ features prominently in
\namehighlight{Legendre}'s formula on the $p$-valuation of $n!$ (the exponent of the largest power of $p$ dividing $n!$):
\begin{equation}\label{eqn_legendre}
\nu_p(n!)=\frac{n-\digitsum_p(n)}{p-1}.
\end{equation}
Indeed, a necessary and sufficient condition for $m\in A_k\eqdef[1,n]\cap p^k\mathbb N$ is
\[\delta_0(m)=\delta_1(m)=\cdots=\delta_{k-1}(m)=0\quad
\textsf{and}\quad
\sum_{j\geq k}\delta_j(m)p^j\leq \sum_{j\geq k}\delta_j(n)p^j.\]
Since the valuation $\nu_p$ is additive, it follows that
\begin{align*}\nu_p(n!)
&=
\sum_{1\leq m\leq n}
\nu_p(m)
=
\sum_{1\leq k<\ell(n)}
\#\bigl([1,n]\cap p^k\mathbb N\bigr)
\\&=
\sum_{1\leq k<\ell(n)}
\sum_{k\leq j<\ell(n)}
\delta_j(n)p^{j-k}
=
\sum_{1\leq j<\ell(n)}
\delta_j(n)
\sum_{1\leq k\leq j}
p^{j-k}
\\&=
\sum_{0\leq j<\ell(n)}
\delta_j(n)
\frac{p^j-1}{p-1}
=\frac{n-\digitsum_p(n)}{p-1}.
\end{align*}

In particular, we recover the simple fact that $\digitsum_p(n)\equiv n\bmod p-1$.
Since $p^k\equiv 1\bmod p-1$, this latter identity is also valid for general integer bases $p\ge2$, a principle that forms the basis of procedures such as ``casting out nines'' and 
\guillemotleft\,preuve par neuf\,\guillemotright.
In particular, we can clearly see the origin of the condition $\gcd(m,q-1)=1$, commonly present in theorems on the distribution of $\digitsum_q(n)$ modulo $m$~\cite{Besineau1972,DrmotaMauduitRivat2011,Gelfond1968,Kim1999,MauduitRivat2009,MauduitRivat2010}.

Studying the $p$-valuation of binomial coefficients has a long history, going back at least to the $19^{\text{th}}$ century.
Assume that $p$ is a prime.
Kummer~\cite{Kummer1852} proved that the highest power $p^k$ dividing $\binom nt$ is the number of \emph{borrows} occurring in the subtraction $n-t$ in base $p$.
For overviews on the topic, we refer to the surveys~\cite{Granville1992,Singmaster1980} by Granville and Singmaster, respectively.

Applying Legendre's formula~\eqref{eqn_legendre} three times, we arrive at the representation
\begin{equation*} 
\digitsum_p(n+t)-\digitsum_p(n)=\digitsum_p(t)-(p-1)\nu_p\biggl(\binom{n+t}n\biggr).
\end{equation*}
This intimate connection between the $p$-valuation of binomial coefficients, carries, and the sum-of-digits function in base $p$ provides motivation, in the opinion of the author, to study the function $\digitsum_p$, and its correlations~\cite{SpiegelhoferWallner2021}.

O.~A.~\namehighlight{Gelfond}~\cite{Gelfond1968} considered the sum-of-digits function in base $q$ along arithmetic progressions.
He proved that, as soon as $\gcd(m,q-1)=1$, the base-$q$ sum-of-digits function along $a+d\mathbb N$, reduced modulo $m$, attains each value $\in\{0,\ldots,m-1\}$ with asymptotic density $1/m$.

The three questions he posed at the end of that paper came to be known as ``Gelfond problems (1967/1968)''. Their content can be summarized, informally, as follows.

\begin{enumerate}
\item Study the joint distribution in residue classes of sum-of-digits functions in different bases.
\item Find the number of prime numbers $p\leq x$ such that $\digitsum_q(p)\equiv \ell\bmod m$.
\item Given a polynomial $P$ such that $P(n)\in\mathbb N$ for $n\in\mathbb N$, study the distribution of $\digitsum_q(P(n))$ in residue classes.
\end{enumerate}

The first problem was settled by \namehighlight{B\'esineau}~\cite{Besineau1972} and \namehighlight{Kim}~\cite{Kim1999}.
\namehighlight{Mauduit} and \namehighlight{Rivat}, in two major papers~\cite{MauduitRivat2009,MauduitRivat2010} solved the second problem as well as a special case of the third problem, concerning the polynomial $P(x)=x^2$.
For the case of the
Thue--Morse sequence,
\begin{equation*} 
\mathsf{t}(n)=\digitsum_2(n) \bmod 2,
\end{equation*}
the latter result yields the following statement.
\begin{equation}\label{eqn_MR2009}
\mbox{\begin{minipage}{0.9\textwidth}
The set of positive integers $n$ such that $\mathsf t(n^2)=\mathtt 0$
has asymptotic density $1/2$.\end{minipage}}
\end{equation}
(More precisely, Mauduit and Rivat handled all bases $q\ge2$, and provided an error term too.)
Clearly, replacing $\tO$ by $\tL$ yields an equivalent statement.

The sequence
$\bigl(\mathsf t(n^2)\bigr)_{n\ge0}
=
\mathtt{0110110111110010111110110100\cdots} 
$
can be found as entry \href{https://oeis.org/A228039}{\texttt{A228039}} in Sloane's \emph{Online Encyclopedia of Integer Sequences}~\cite{OEIS}.
Generalizing~\eqref{eqn_MR2009}, it was even shown to be \emph{normal} by \namehighlight{Drmota}, \namehighlight{Mauduit}, and \namehighlight{Rivat}~\cite{DrmotaMauduitRivat2019}: each finite sequence of $\tO$s and $\tL$s, of length $L\ge1$, appears with asymptotic frequency $2^{-L}$ in this sequence.

%
Beginning with the paper~\cite{MauduitRivat2009} on the digits of $n^2$, interest in the third Gelfond problem~\cite{Gelfond1968} came up again.
Efforts invested in order to extend Mauduit and Rivat's results on $\digitsum_q(n^2)$~\cite{MauduitRivat2009} did not yet produce a polynomial of higher degree for which analogous results hold for all bases (for example, uniform distribution in residue classes $a+m\mathbb Z$, where $\gcd(m,q-1)=1$).
Partial results exist~\cite{DartygeTenenbaum2006,DrmotaMauduitRivat2011,Moshe2007,Stoll2012}, in particular, it is known that $\mathsf t(n^3)=\tO$ infinitely often; moreover, the distribution of $s_q$ in residue classes, along polynomials, is understood for ``large bases'' $q$.


\section{The main result}
In the present paper, we settle the case $\digitsum_q\bigl(P(n)\bigr)\bmod m$, for $(q,m)=(2,2)$ and the polynomial $P(n)=n^3$, of Gelfond's third problem from 1967/1968.
\begin{theorem}\label{thm_main}
There exist real numbers $c>0$ and $C$ such that for all $x\ge1$,
\begin{equation}\label{eqn_main}
\left\lvert
\#\bigl\{n<x:
\mathsf{t}\bigl(n^3\bigr)=\tO\bigr\}-\frac x2\right\rvert
\leq Cx^{1-c}.
\end{equation}
\end{theorem}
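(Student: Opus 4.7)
The plan is to rephrase the counting estimate~\eqref{eqn_main} as the exponential-sum bound
\[
\biggl\lvert\sum_{n<x}(-1)^{\digitsum_2(n^3)}\biggr\rvert\leq Cx^{1-c},
\]
which is equivalent since $\mathsf t(n)=\digitsum_2(n)\bmod 2$ implies $\#\{n<x:\mathsf t(n^3)=\tO\}-x/2=-\tfrac12\sum_{n<x}(-1)^{\digitsum_2(n^3)}$. I would work with the more flexible family $\sum_{n<x}e\bigl(\alpha \digitsum_2(n^3)\bigr)$ evaluated at $\alpha=1/2$, following the Mauduit--Rivat strategy.

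The first step is \emph{truncation}: replace $\digitsum_2(m)$ by $\digitsum_2^{(\lambda)}(m)\eqdef\sum_{\ijkl<\lambda}\digit_\ijkl(m)$ with $\lambda$ slightly larger than $3\log_2 x$, and cut out a bounded window $[\mu,\lambda)$ of ``useful'' digits; the high digits contribute trivially because $n^3<2^\lambda$, and the few low digits produce only a small multiplicative loss. After a Gelfond-type Fourier expansion of $e\bigl(\alpha \digitsum_2^{(\lambda)}(m)\bigr)$ in residue classes modulo~$2^\lambda$, matters reduce to estimating, uniformly for integers $h$ in a dyadic range, exponential sums of the type
\[
T(h)\eqdef\sum_{n<x}e\!\left(\frac{h}{2^\lambda}n^3+\beta\,\digitsum_2^{[\mu,\lambda)}(n^3)\right),
\]
where $\digitsum_2^{[\mu,\lambda)}$ restricts the summation in $\digitsum_2$ to the window, and $\beta$ is a suitable frequency tied to~$\alpha$.

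The second step is to lower the polynomial degree by two successive applications of van der Corput's inequality, with shift parameters $r_1,r_2$ in carefully chosen dyadic ranges. The first application turns $(n+r_1)^3-n^3=3r_1 n^2+3r_1^2 n+r_1^3$ into a quadratic phase; the second reduces to the linear phase
\[
\bigl(n+r_1+r_2\bigr)^3-\bigl(n+r_1\bigr)^3-\bigl(n+r_2\bigr)^3+n^3=6r_1r_2 n,
\]
which can be treated classically via geometric sums in~$n$. The digital part $\digitsum_2^{[\mu,\lambda)}(n^3)$ is handled in parallel: for most $n$ and most small shifts, the carries caused by shifting $n$ or $n^3$ by bounded quantities are localized, so the differenced sum-of-digits function equals a \emph{local} functional of~$n$ and $r_1,r_2$ modulo a small power of~$2$. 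This locality is precisely what makes the Fourier-analytic input tight, and it is the mechanism that transfers the cancellation from the polynomial phase to the whole integrand.

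The main obstacle, and what distinguishes the cubic case from the quadratic one of Mauduit--Rivat, is the quality of the bound after \emph{two} van der Corput steps. Each application costs a square-root, so only an essentially power-saving estimate for the resulting doubly-bilinear sum over $(r_1,r_2,n)$ will close the argument. The delicate point is to prove a sufficiently sharp \emph{carry-propagation lemma}: the set of $n$ for which the four shifted cubes $n^3$, $(n+r_1)^3$, $(n+r_2)^3$, $(n+r_1+r_2)^3$ produce long, synchronized carries in the window $[\mu,\lambda)$ must be shown to have density decaying faster than the saving demanded by the two van der Corput steps. Combining this carry estimate with the linear-phase cancellation $e\bigl((6hr_1r_2/2^\lambda)n\bigr)$, and optimizing the parameters $\mu,\lambda$ and the dyadic ranges of $r_1,r_2,h$, would then yield the power-saving exponent $c>0$ claimed in~\eqref{eqn_main}.
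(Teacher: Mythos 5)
Your proposal is a faithful sketch of the Mauduit--Rivat strategy for squares transplanted to cubes, but it has a fundamental gap precisely at the point where the cubic case diverges from the quadratic one. The claim that ``for most $n$ and most small shifts, ... the differenced sum-of-digits function equals a local functional of $n$ and $r_1,r_2$ modulo a small power of $2$'' is the step that fails. After two van der Corput differencings the \emph{polynomial phase} $hn^3/2^\lambda$ does indeed become linear ($\asymp 6hr_1r_2n/2^\lambda$, as you compute), but the \emph{digital} part is a four-fold product $\prod \e\bigl(\pm\tfrac12\digitsum^{[\mu,\lambda)}((n+\cdot)^3)\bigr)$, and the sum-of-digits function does not commute with finite differences. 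Each of the four arguments is a genuine cube of size $\asymp 2^{3\nu}$, and the differences between neighbouring arguments, e.g.\ $(n+r_1)^3-n^3 = 3r_1n^2 + O(2^{\nu+\log R})$, still occupy $\asymp 2\nu$ bit-positions, depending on $n$ \emph{quadratically}. The carry lemma (Lemma~\ref{lem_carry}) only allows one to discard the digits with indices $\geq\lambda$; inside the window $[\mu,\lambda)$ there remain $\Theta(\nu)$ contributing digits of a nonlinear function of $n$. Two van der Corput steps, costing a square-root each, cannot pay for an object of this complexity -- this is exactly the obstruction that blocked the extension of~\cite{MauduitRivat2009} to higher degrees, as the introduction of the paper points out.

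The paper's proof circumvents this with three devices absent from your sketch. First, instead of hoping that the window becomes local, it \emph{detects} the digits of $n^3$ in a critical interval $[u,\lambda)$ by Vaaler trigonometric polynomials (Corollary~\ref{cor_interval_detection_shift}), converting the nonlinearity into an extra summation variable $h$ and, after one van der Corput step with shift $m2^\tau$ plus summation by parts and Dirichlet approximation, into a geometric sum $\varphi_H$ (Section~\ref{sec_linearize}). Second, that geometric sum is \emph{uncoupled} from the sum-of-digits part by partitioning into short arithmetic progressions with carefully chosen odd difference $T$ and applying the extremal H\"older inequality (Section~\ref{sec_uncoupling}); this is crucial because $H$ is far larger than the inner summation length, so the factor $\varphi_H$ must contribute only a logarithm, not $H$. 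Third, and most importantly, the remaining sum-of-digits expression is along a \emph{linear} progression in $\nLL$, which then admits an \emph{iterated} van der Corput -- not two steps, but $4(L+1)$ steps with $L\asymp\nu$ -- cutting out slice after slice of digits until only a bounded window survives, which is finally bounded by a Gowers-norm estimate for the Thue--Morse sequence (Proposition~\ref{prp_uniform}, Lemma~\ref{lem_gowers}). In your plan there is no mechanism to linearize the argument of $\digitsum$ before slicing, and without linearization the iterated slicing does not work, which is why the proposal cannot close.
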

This theorem is a statement on the sequence
\[\bigl(\mathsf t(n^3)\bigr)_{n\ge0}
=
\mathtt{0110100010000100100000010110\cdots},\]
which is recorded as entry \href{https://oeis.org/A365089}{\texttt{A365089}} in the OEIS~\cite{OEIS}.
In particular, the following holds
\begin{equation}\label{eqn_main_corollary}
\mbox{The sets of $n\in\mathbb N$ satisfying $\mathsf t(n^3)=\tO$ or $\tL$, respectively, have asymptotic density $1/2$.}
\end{equation}

\bigskip
\noindent
We introduce an exponential sum $\Szero$, whose smallness is sufficient for~\eqref{eqn_main} to hold for all $x\ge1$.
Given an integer $\nu\ge0$ and a real number $\xi$, let us define
\begin{equation}\label{eqn_S1_def}
\Szero\bigl(\nu,\xi\bigr)
\eqdef
\frac1{2^\nu}
\sum_{0\leq n<2^\nu}
\e\left(\frac12\smallspace\digitsum_2\bigl(n^3\bigr)
+n\xi
\right),
\end{equation}
where $\e(x)=\exp(2\pi i x)$.
Applying Lemma~\ref{lem_vinogradov} in order to extend the summation index in the
 main theorem to the next higher power $2^\nu$ of $2$,
it can be seen easily that it is sufficient to prove the following proposition.
\begin{proposition}\label{prp_sufficient}
There exist absolute constants $c>0$ and $C$ such that for all integers $\nu\ge0$,
\begin{equation*} 
\sup_{\xi\in\mathbb R}
\smallspace
\bigl\lvert \Szero\bigl(\nu,\xi\bigr)\bigr\rvert
\leq C
e^{-\nu c}.
\end{equation*}
\end{proposition}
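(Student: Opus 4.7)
The plan is to follow and extend the Fourier-analytic framework that Mauduit and Rivat developed for $\mathsf t(n^2)$ in~\cite{MauduitRivat2009}, treating the extra degree of the cube by an additional van der Corput step and by more delicate carry-propagation estimates of the sort present in~\cite{SpiegelhoferWallner2021}.

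First I truncate the sum of digits. For parameters $0\le\mu<\lambda$, of order $c_1\nu$ and $c_2\nu$ respectively, let $s_{\mu,\lambda}(m)\eqdef\sum_{\mu\le j<\lambda}\delta_j(m)$ be the sum of the digits of~$m$ with indices in~$[\mu,\lambda)$. A carry lemma should show that $\digitsum_2(n^3)\equiv s_{\mu,\lambda}(n^3)\pmod 2$ for all $n<2^\nu$ outside an exceptional set of cardinality~$\mathcal O\bigl(2^{\nu(1-c)}\bigr)$: digits above index~$\lambda$ are stable under the small perturbations of~$n$ that will be introduced by van der Corput, and digits below~$\mu$ can be averaged out directly. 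Since $s_{\mu,\lambda}(n^3)\bmod 2$ depends only on $n^3\bmod 2^\lambda$, one may expand
\[
\e\bigl(\tfrac12\, s_{\mu,\lambda}(n^3)\bigr)
=\sum_{0\le h<2^\lambda}\hat f_{\mu,\lambda}(h)\, \e\bigl(h\,n^3/2^\lambda\bigr),
\]
where $\hat f_{\mu,\lambda}$ is the discrete Fourier transform of $m\mapsto \e(\tfrac12 s_{\mu,\lambda}(m))$ on $\mathbb Z/2^\lambda\mathbb Z$. A product-formula computation, exploiting that $\e(\tfrac12 s_{\mu,\lambda})$ factors over digit positions, will provide favourable control on $\sum_h|\hat f_{\mu,\lambda}(h)|$ and, more importantly, concentrate its mass on frequencies $h$ with many nonzero low-order bits.

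It remains to bound, for each~$h$, the cubic exponential sum
\[
T(h,\xi)\eqdef\sum_{n<2^\nu}\e\bigl(h\,n^3/2^\lambda+n\xi\bigr).
\]
Two applications of van der Corput's inequality reduce the cubic phase first to a quadratic phase of the shape $\e\bigl(h(3n^2 r_1+3nr_1^2+r_1^3)/2^\lambda\bigr)$ averaged over a shift $r_1$, and then to linear sums in $n$ that can be estimated via standard bounds on incomplete geometric series once the leading coefficient $6hr_1r_2/2^\lambda$ is diophantine-approximated appropriately. Weighting the resulting $T(h,\xi)$ bound by $\hat f_{\mu,\lambda}(h)$ and adding the carry-exceptional contribution should then deliver an overall estimate of the shape $e^{-c\nu}$.

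The main obstacle is the interaction between the Fourier expansion and the two van der Corput applications: the ``bad'' triples $(r_1,r_2,h)$ for which $6hr_1r_2/2^\lambda$ is close to a rational with very small denominator fail to give any cancellation in $T(h,\xi)$, and one must exploit the Fourier decay of $\hat f_{\mu,\lambda}$ precisely in the stratum of $h$ where these resonances occur. Calibrating $\mu$ and $\lambda$ as appropriate fractions of~$\nu$ so that the carry exceptional set, the $\ell^1$-mass of $\hat f_{\mu,\lambda}$, and the van der Corput savings all conspire to yield a geometric saving uniform in~$\xi$ is, I expect, the technical heart of the proof.
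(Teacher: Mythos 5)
Your first step already contains a false claim: the carry lemma does \emph{not} assert that $\digitsum_2(n^3)\equiv s_{\mu,\lambda}(n^3)\pmod 2$ for most $n<2^\nu$. The digits above index $\lambda$ and below index $\mu$ are genuinely present and are not negligible; what the carry lemma controls (see Lemma~\ref{lem_carry} in the paper) is the \emph{difference} $\digitsum\bigl((n+r)^3\bigr)-\digitsum\bigl(n^3\bigr)$, which can be replaced by its truncated version for most $n$ once a van der Corput step has produced such a difference. In the Mauduit--Rivat paradigm the truncation therefore happens \emph{after} two applications of van der Corput, not by a direct ``most $n$'' approximation; your proposal reverses the order and asserts something that is simply not true.

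More seriously, even after repairing this, the quantitative skeleton of the plan cannot produce a saving for cubes. The Fourier transform $\hat f_{\mu,\lambda}$ of $m\mapsto\e\bigl(\tfrac12\digitsum^{[\mu,\lambda)}(m)\bigr)$ on $\mathbb Z/2^\lambda\mathbb Z$ has $L^1$-mass roughly $2^{(\lambda-\mu)/2}$. For squares one may take $\lambda\approx 2\nu$ and $\mu\approx\nu$, so $\lambda-\mu\approx\nu$ and the $L^1$-mass $\approx 2^{\nu/2}$ is exactly what a Gauss-sum saving $\sqrt N=2^{\nu/2}$ can absorb. For cubes the critical window has length $\lambda-\mu\approx 2\nu$ (one must reach $\lambda\approx 3\nu$ but cannot truncate below $\mu\approx\nu$), so the $L^1$-mass is $\approx 2^{\nu}$, while two Weyl differencings on $\sum_n\e(hn^3/2^\lambda+n\xi)$ yield at best a saving of order $2^{c\nu}$ with $c<1$. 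The two factors do not reconcile: this is precisely the ``too many contributing digits'' obstruction that the paper identifies as \emph{the} difficulty of the degree-$3$ case, and it is why a direct extension of~\cite{MauduitRivat2009} was never made to work. The paper circumvents it by a genuinely different architecture: (i)~a \emph{linearization} step (Section~\ref{sec_linearize}) that first removes the cube from the argument of $\digitsum$ by detecting digits in a critical interval with a Vaaler polynomial, paying only a geometric factor $\varphi_H$; (ii)~an \emph{uncoupling} step (Section~\ref{sec_uncoupling}) that separates $\varphi_H$ from the digit-sum part via Dirichlet approximation and H\"older along arithmetic progressions; and (iii)~an \emph{iterated digit-slicing} step (Section~\ref{sec_uniform}), possible only because the problem is now linear, which carves away blocks of digits with repeated van der Corput inequalities until a Gowers norm estimate for the Thue--Morse sequence finishes the job. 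None of these three ingredients appears in your proposal, and without at least the linearization there is no known way to drive the digit window down to length $o(\nu)$.
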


The remainder of this paper is devoted to the proof of Proposition~\ref{prp_sufficient}.
\subsection{Notation}\label{sec_notation}
Henceforth, only the sum-of-digits function in base $2$ will be considered.
We write $\digitsum\eqdef\digitsum_2$ and simply call this the \emph{sum-of-digits function}.
Digits $\digit_\ijkl(n)$ are understood to be digits in base $2$ from now on.

\smallskip\noindent
Let us define periodic functions of a real variable $x$, with period $1$: 
\[
\begin{aligned}
\e(x)&\eqdef\exp(2\pi i x),&
\lfloor x\rfloor&\eqdef\max\{n\in\mathbb Z:n\leq x\},\\
\lVert x\rVert&\eqdef\min\bigl\{\lvert x-n\rvert:n\in\mathbb Z\bigr\},& 
\{x\}&\eqdef x-\lfloor x\rfloor.
\end{aligned}
\]

\smallskip\noindent
We use the shorthand
\[\logp(x)\eqdef
\begin{cases}1,&x<e;\\
\log x,&x\geq e,
\end{cases}
\]
which avoids some case distinctions. In particular, $\sum_{1\leq j\leq x}1/j\ll\logp(x)$ for all $x\ge0$.

\smallskip\noindent
We make use of the Iverson bracket notation $[\mathcal R]$, for a relation $\mathcal R$.
It yields $1$ if $\mathcal R$ is satisfied, and $0$ otherwise.

\smallskip\noindent
If $A,B\subseteq\mathbb R$, we write
\[A+B\eqdef\{a+b:(a,b)\in A\times B\},\] and similarly for ``$-$''.
By slight abuse of notation, we will also write
$A\pm x\eqdef\{a\pm x:a\in A\}$ for $x\in\mathbb R$ and $A\subset\mathbb R$.

\smallskip\noindent
For a subset $I\subseteq\mathbb N$, and $n\in\mathbb N$, let
\begin{equation*} 
n^I\eqdef\sum_{j\in I}\digit_j(n)q^j.
\end{equation*}

\smallskip\noindent
Assume that $I\subseteq \mathbb R$ is any set, and let $\indicator_I$ denote the indicator function of the set $I\subseteq\mathbb R$,
\[\indicator_I(x)=\begin{cases}1,&x\in I;\\0,&x\not\in I.\end{cases}\]
The set of natural numbers is denoted by $\mathbb N$, and contains $0$.
For an integer $n\ge1$, we set $\nu_2=\max\{k\ge0:2^k\mid n\}$.

\smallskip\noindent
In order to avoid ambiguities, we do not use terms like ``the $\ijkl$-th digit of $n$'' in order to denote the coefficient $\digit_j$ in the expansion $n=\sum_{0\leq i\leq \nu}\digit_iq^i$, since in everyday language, a list begins with its \emph{first} element rather than its ``$0$-th element''.
We would rather use ``the digit of $n$ at index $\ijkl$'', or similar phrases.

\subsection{Description of the proof of Theorem~\ref{thm_main}}
\subsubsection{Overview}
A central tool in our proof is \namehighlight{van der Corput}'s inequality (Lemma~\ref{lem_vdC}),
which already proved very effective in Mauduit and Rivat's papers~\cite{MauduitRivat2009,MauduitRivat2010} on the sum of digits of squares and primes, respectively.
This lemma reduces the estimation of a sum $\sum_{n\in I}a_n$ to the estimation of certain \emph{correlations}
\begin{equation}\label{eqn_correlation}
\sum_{n\in I\cap (I-r)}a_n\overline{a_{n+r}},
\end{equation}
where $r$ may be relatively small compared to $\lvert I\rvert$.
The usefulness of this statement in the context of sum-of-digits functions can be understood, informally, by a monotonicity statement.
Consider the function $n\mapsto [n]_2$, which maps the nonnegative integer $n$ to its binary expansion, which is an element of the set
\[\mathcal A\eqdef \bigl\{z\in\{0,1\}^\mathbb N: z_i=1 \mbox{ only finitely often}\bigr\}.\]
(Note that we consider improper expansions, padded with zeros to the left.)
With respect to the usual order on $\mathbb N$ and the lexicographical order
\[x<y\Leftrightarrow x\neq y\mbox{ and }
(x_i,y_i)=(0,1)\mbox{ for }i\eqdef\max\{j\ge0:x_j\neq y_j\}
\]
on $\mathcal A$, this assignment is increasing.
Noting also that the length of the binary expansion of $n$ only grows logarithmically in $n$, we see --- informally --- that adding a ``small'' value $r$ to a given integer $n$ does not change ``too many'' binary digits, except for ``few'' cases where carry propagation over a long distance occurs.
In the definition~\eqref{eqn_S1_def} of $\Szero$, the sum-of-digits function appears only in the exponential.
Evaluating the product $a_n\overline{a_{n+r}}$ in the expression~\eqref{eqn_correlation}, it follows that the contributions of the digits with high indices cancel (except for the ``few'' exceptional cases), which leads to the \emph{truncated sum-of-digits function}
\[\digitsum^{[0,\lambda)}(n)\eqdef \digitsum\bigl(n\bmod 2^\lambda\bigr).\]
In the paper~\cite{MauduitRivat2009} by Mauduit and Rivat, this idea was extended in order to discard digits at the lowest indices too:
van der Corput's inequality is applied again, this time $r$ is a multiple of $2^\mu$.
This leaves the lowest $\mu$ digits unchanged, and we arrive at the \emph{doubly truncated sum-of-digits function} $\digitsum^{[\mu,\lambda)}$.
Having disposed of many digits, the truncated function can be controlled, which led to a proof of the main results in~\cite{MauduitRivat2009,MauduitRivat2010}.

The author~\cite{Spiegelhofer2020} continued the investigation of this idea.
We did not stop after two applications of van der Corput's inequality,
but discarded digits with indices in certain intervals (``windows of digits'') iteratively.
This proved very useful in the context of sum-of-digits functions along \emph{very sparse arithmetic progressions}.
The common difference of such a progression may be an arbitrarily large power $N^K$ of its length $N$, and consequently $K\log_2N$ digits have to be taken into account.
After the iterated truncation of digits --- comparable to salami slicing --- we are left only with a small window $I$ of contributing digits.
The size $\lvert I\rvert$ of this residual interval is considerably smaller than $\log_2N$, while the length $N$ of summation is almost unchanged.
In this way we obtain a certain uniform distribution result of the digits in $I$, and thus the main difficulty --- ``too many contributing digits'' --- has been overcome.
The performed iterated application of van der Corput's inequality leads us to expressions of the type
\[
\prod_{\varepsilon_0,\ldots,\varepsilon_L\in\{0,1\}}
f\bigl(n+\varepsilon_0r_0+\cdots+\varepsilon_Lr_L\bigr),\]
where $f(n)=\e\bigl(\tfrac12{\digitsum^I(n)}\bigr)$.
This is reminiscent of a \namehighlight{Gowers} uniformity norm, as treated by \namehighlight{Konieczny}~\cite{Konieczny2019} for the case of the Thue--Morse sequence, and by \namehighlight{Byszewski}, \namehighlight{Konieczny}, and \namehighlight{M\"ullner}~\cite{ByszewskiKoniecznyMuellner2020} for general automatic sequences.
Along these lines, we arrived at a  statement on the \emph{level of distribution} of the Thue--Morse sequence~\cite{Spiegelhofer2020}.

\namehighlight{Drmota}, \namehighlight{M\"ullner}, and the author~\cite{DrmotaMuellnerSpiegelhofer2021} extended this method to the \namehighlight{Zeckendorf} sum-of-digits function~\cite{DrmotaMuellnerSpiegelhofer2021}.
This function yields the minimal number of Fibonacci numbers needed to write a nonnegative integer $n$ as their sum.
The level of distribution of the Zeckendorf sum-of-digits function forms an essential ingredient in the proof of the main theorems of that paper, which establishes theorems on the representation of prime numbers as sums of different, non-adjacent Fibonacci numbers.

\subsubsection{The three main steps in the proof of Proposition~\ref{prp_sufficient}}
In the present paper, we will apply ``digit slicing'' again.
Before that, however, we have to take great care to eliminate the consequences of the nonlinearity introduced by $n^3$, which is the main difficulty and makes up the major part of the paper (see Sections~\ref{sec_linearize} and~\ref{sec_uncoupling}).

\smallskip
The proof of Proposition~\ref{prp_sufficient} starts in \textbf{Section~\ref{sec_lemmas}}, which contains a series of lemmas.
This section is followed by the main proof, which is structured into three main parts:
\[
\mbox{$\mathrm I\cdot$ Linearization,\quad
$\mathrm{II}\cdot$ Uncoupling,\quad
$\mathrm{III}\cdot$ Elimination.
}
\]

These three steps are handled in Sections~\ref{sec_linearize} to~\ref{sec_uniform}.
Section~\ref{sec_finishing} combines the arguments from these three sections, which completes the proof.
We proceed to the description of our three main steps, which is somewhat more technical.
\begin{itemize}
\item[$\mathrm{I}\cdot$] Linearization.
In \textbf{Section~\ref{sec_linearize}}, we state and prove our \emph{key result}, Proposition~\ref{prp_linearize}.
This subsequently leads to the statement of Corollary~\ref{cor_linearize}, from which we will continue in the sections thereafter.
At this point, the cube in the argument of $\digitsum$ has already been removed,
leaving only a \emph{linear problem} to be handled (see~\eqref{eqn_K_def},~\eqref{eqn_Qprime_def},~\eqref{eqn_S9_linear3},~\eqref{eqn_linearize}). 
Roughly speaking, using van der Corput's inequality (Lemma~\ref{lem_vdC}), combined with the \emph{carry lemma} (Lemma~\ref{lem_carry}), we eliminate the contribution of binary digits of $n^3$ above $\lambda=2\nu(1+\Xi)$, where $N=2^\nu$, and $\Xi$ is a small constant to be chosen later.
We decompose $\{0,\ldots,2^\nu-1\}$ into arithmetic progressions with difference $2^\rho$, where $\rho=(1-2\Xi)\nu$
is slightly smaller than $\nu$, see~\eqref{eqn_first_split},~\eqref{eqn_nL_split}.
Since $3\rho\ge\lambda$, the cubes do not enter in an essential way when we proceed along these progressions.
Digits on the window $[2\rho,\lambda)$, where we still have quadratic behaviour, are detected by means of a trigonometric (``Vaaler'') polynomial,
of degree $H$.
This detection transfers the remaining nonlinearity into a trigonometric polynomial.
By another application of van der Corput's inequality, the argument of this trigonometric polynomial becomes linear too. We wish to note, in order to avoid possible confusion, that in the actual proof this application of the inequality comes first, for technical reasons.

Note that the length of our progressions --- which is $2^{\nu-\rho}\asymp 2^{2\Xi\nu}$ --- is considerably smaller than the number $2^{\lambda-2\rho}\asymp2^{6\Xi\nu}$ of digit combinations that we have to detect in order to remove the window $[2\rho,\lambda)$.

The remainder of the \emph{linearizing} part consists in accumulating the additional terms arising from the detection of digits in the \emph{critical interval} $[2\rho,\lambda)$.
To this end, we will use summation by parts~\eqref{eqn_S6_split} and Dirichlet approximation~\eqref{eqn_x_close} in a suitable manner.
After this procedure, the additionally generated terms are captured by a 
geometric sum $\varphi_H$, see~\eqref{eqn_S3_S8}.
We have thus exchanged the nonlinearity of the problem for an additional factor $\varphi_H$, which is recorded in Corollary~\ref{cor_linearize}.

\item[$\mathrm{II}\cdot$] Uncoupling.
In the just obtained Corollary~\ref{cor_linearize}, the sum-of-digits part --- contained in the expression $\Seight$ --- is \emph{coupled} to the newly introduced geometric sum $\varphi_H$,
in the sense that both factors depend on the outer summation variables.
In \textbf{Section~\ref{sec_uncoupling}}, we
\emph{uncouple} these two terms, by applying Dirichlet approximation a second time.
We choose an integer $T$ suitably, and decompose the summation (over $\nLO$) into arithmetic progressions with common difference $T$.
Along each of these progressions, the geometric sum $\varphi_H$ is almost constant due to Dirichlet's approximation theorem.
Therefore we can apply in a profitable manner the $(p,q)=(1,\infty)$-case of H\"older's inequality on this decomposition.
More precisely, Lemma~\ref{lem_glycerol} is used, consisting of two applications of H\"older's inequality.
Thus, the ``Dirichlet part'' (a geometric sum) and the ``sum-of-digits part'' separate, and can be handled independently.

At this point we wish to revisit a remark from the first proof step.
The length $2^{2\Xi\nu}$ of our innermost summation (over $\nLL$) is much shorter than the number $2^{6\Xi\nu}$ of detected digit combinations on the critical interval $[2\rho,\lambda)$, which in turn has to be smaller than the degree $H$ of the trigonometric polynomial.
Thus the innermost summation is much shorter than the additional summation over $h<H$.
We have therefore reason to wonder how this can even yield a nontrivial estimate.

The answer is given by the two-step uncoupling process, started already in Section~\ref{sec_linearize}. In that section, we translated the additional summation of length $H$ into a geometric sum $\varphi_H$.
In Section~\ref{sec_uncoupling}, we separate $\varphi_H$ from the main part, containing sum-of-digits functions. This yields an \emph{average} in $x$ over $\sum_{0\leq h<H}\e(hx)$, hence only a \emph{logarithmic factor}!

A simplified form of this argument appears again, in the treatment of the error term~$\Efour$. Van der Corput's inequality is applied after detection of the digits in $[2\rho,\lambda)$.
The error that arises from the subsequent omission of digits with indices $\ge\lambda$ --- which is greater than $2^{2\rho-\lambda}$ --- obviously cannot be multiplied by the trivial bound $H$ without losing the nontrivial estimate.
However, it easily swallows the mean value of $\varphi_H$.

The sum-of-digits part $\Seight$, evaluated along arithmetic progressions with common difference $T$, remains.

\item[$\mathrm{III}\cdot$] Elimination.
We proceed to \textbf{Section~\ref{sec_uniform}}.
In order to obtain an upper bound for $\Seight$,
we start the procedure that removes slices of digits repeatedly.
This happens by iterated use of van der Corput's inequality, see Lemma~\ref{lem_vdC_iterated}, applied to the main summation variable $\nLL$ of the sum $\Seight$, which is defined in~\eqref{eqn_S9_def}.
Note that a certain average in $\nLO$ over the expression $\Seight$ is evaluated.
We consider $\nLO\in P$, where $P$ is an arithmetic progression with difference $T$, coming from part $\mathrm{II}\cdot$.
After this procedure, most digits have been removed.
Only the \emph{truncated sum-of-digits function} $\digitsum^{[c,d)}$ is left, which depends only on the digits with indices in a small interval $[c,d)$.
For this method to work, it will prove essential that the problem has been reduced to a linear one before, as repeated application of van der Corput's inequality does not play well with polynomials of higher degree inside the sum-of-digits function.
(Note also that passing to an arithmetic subsequence with common difference $T$ does not disrupt the linearity of the problem.)
Clearing additional complications --- for example, we have to consider \emph{four slopes} synchronously, as opposed to the easier situation in~\cite{Spiegelhofer2020} ---
we obtain a \emph{Gowers norm} of the Thue--Morse sequence, for which estimates are available~\cite{ByszewskiKoniecznyMuellner2020,Konieczny2019}.
A nontrivial estimate for the ``sum-of-digits part'' $\Seight$ follows.
This completes the treatment of the main issues, and it only remains to tie up loose ends, which happens in Section~\ref{sec_finishing}.
\end{itemize}
\section{Lemmas}\label{sec_lemmas}
We state a series of lemmas:

\bigskip

\begin{tabular}{ll}
  The inequalities of\\
  \hspace{2em}van der Corput,& Lemmas~\ref{lem_vdC_generalized},~\ref{lem_vdC},~\ref{lem_vdC_iterated};\\
  \hspace{2em}H\"older (extremal case, twofold),& Lemma~\ref{lem_glycerol};\\
  \hspace{2em}Erd\H{o}s--Tur\'an--Koksma, & Lemma~\ref{lem_ETK};\\
  \hspace{2em}Koksma--Hlawka,&Lemma~\ref{lem_Koksma_Hlawka};\\
Vaaler approximation,&Lemma~\ref{lem_vaaler2}, Corollary~\ref{cor_interval_detection_shift};\\
extending a summation range, & Lemma~\ref{lem_vinogradov};\\
The ``large sieve equality'', & Lemma~\ref{lem_full};\\
The ``carry lemma'' for $\digitsum_2$, & Lemma~\ref{lem_carry};\\
reverse summation by parts, & Lemma~\ref{lem_summation_by_parts};\\
``odd elimination'', & Lemma~\ref{lem_odd_elimination}.
\end{tabular}

\bigskip

The following generalization of van der Corput's inequality was proved in our joint paper~\cite{DrmotaMuellnerSpiegelhofer2021} with Drmota and M\"ullner, and appears as Proposition~6.14 there.

While this is by no means a deep theorem (the proof is an application of the Cauchy--Schwarz inequality, combined with ``double counting''), its particular formulation enabled us to prove a level of distribution-statement for the \emph{Zeckendorf sum-of-digits function}, and, consequently, the main theorems in our paper~\cite{DrmotaMuellnerSpiegelhofer2021}.
\begin{lemma}[Generalized van der Corput inequality,~\cite{DrmotaMuellnerSpiegelhofer2021}]\label{lem_vdC_generalized}
Let $I$ be a finite interval in $\mathbb Z$ containing $M$ integers and $x_n\in\mathbb C$ for $n\in I$.
Assume that $\mathcal S\subseteq\mathbb Z$ is a finite nonempty set.
Then
\[
\left\lvert \sum_{n\in I}x_n\right\rvert^2
\leq \frac{M+\max \mathcal S-\min \mathcal S}{\lvert \mathcal S\rvert^2}
\sum_{(\sO,\sL)\in \mathcal S^2}
\,
\sum_{\substack{n\in (I-\sO)\cap(I-\sL)}}
x_{n+\sO}\overline {x_{n+\sL}}.
\]
The right hand side is a nonnegative real number.
\end{lemma}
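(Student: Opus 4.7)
The plan is to combine the Cauchy--Schwarz inequality with a double-counting identity, in the spirit hinted at by the authors. First I would extend the sequence by setting $x_n \eqdef 0$ for $n \notin I$, and introduce the enlarged interval $J \eqdef \{m \in \mathbb Z : \min I - \max \mathcal S \leq m \leq \max I - \min \mathcal S\}$, which contains exactly $M + \max \mathcal S - \min \mathcal S$ integers and satisfies $I - s \subseteq J$ for every $s \in \mathcal S$. For each $m \in J$, I would define the window sum $y_m \eqdef \sum_{s \in \mathcal S} x_{m+s}$.

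The double-counting step is the identity
\[
\sum_{m \in J} y_m \;=\; \sum_{s \in \mathcal S} \sum_{m \in J} x_{m+s} \;=\; \lvert \mathcal S\rvert \sum_{n \in I} x_n,
\]
obtained by swapping the order of summation and using that $x$ vanishes outside $I$ while $J \supseteq I - s$, so that after the substitution $n = m+s$ the inner sum equals $\sum_{n \in I} x_n$ for each fixed $s \in \mathcal S$.

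Next I would apply Cauchy--Schwarz to the leftmost expression in the form $\bigl\lvert \sum_{m \in J} y_m \bigr\rvert^2 \leq \lvert J \rvert \sum_{m \in J} \lvert y_m \rvert^2$. Expanding $\lvert y_m \rvert^2 = \sum_{(\sO, \sL) \in \mathcal S^2} x_{m+\sO} \overline{x_{m+\sL}}$ and interchanging the order of summation, each term is nonzero only when $m + \sO \in I$ and $m + \sL \in I$, i.e.\ $m \in (I - \sO) \cap (I - \sL)$, so that the outer sum over $m \in J$ may be restricted to this intersection without changing the value. Combining this with the identity from the previous paragraph and dividing by $\lvert \mathcal S \rvert^2$ yields precisely the stated inequality.

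The only obstacle is the careful bookkeeping of the index shifts and the verification that $\lvert J \rvert = M + \max \mathcal S - \min \mathcal S$; there is no deeper content beyond Cauchy--Schwarz and re-indexing. The nonnegativity of the right-hand side is automatic, as it equals $\lvert J \rvert \cdot \lvert \mathcal S \rvert^{-2} \sum_{m \in J} \lvert y_m \rvert^2$, which is a sum of squared moduli.
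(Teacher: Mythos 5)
Your proof is correct and follows exactly the approach the paper alludes to (Cauchy--Schwarz combined with a double-counting identity); the lemma is quoted from \cite{DrmotaMuellnerSpiegelhofer2021}, where this is the standard argument. The bookkeeping checks out: $\lvert J\rvert = M+\max\mathcal S-\min\mathcal S$, the re-indexing $n=m+s$ is legitimate because $I-s\subseteq J$, and the restriction of the inner sum to $(I-\sO)\cap(I-\sL)$ is valid since this intersection is contained in $J$.
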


In particular, choosing $\mathcal S$ to be a finite arithmetic progression $\{0,M,2M,\ldots,(R-1)M\}$, we recover Lemme~17 in the paper~\cite{MauduitRivat2009} by Mauduit and Rivat.
That lemma, in turn, is a generalization of the classical inequality of van der Corput (the case $M=1$).
\begin{lemma}[Mauduit--Rivat]\label{lem_vdC}
Let $I\subseteq\mathbb Z$ be a finite interval containing $N$ integers, and
$(z_n)_{n\in I}$ a family in $\mathbb C$.
For all integers $M\geq 1$ and $R\geq 1$ we have
\[
  \left\lvert \sum_{n\in I}a_n\right\rvert^2
\leq
  \frac{N+M(R-1)}R
  \sum_{\substack{r\in\mathbb Z\\\lvert r\rvert<R}}\left(1-\frac {\lvert r\rvert}R\right)
  \sum_{n\in I\cap (I-Mr)}z_{n}\overline{z_{n+Mr}}.
\]
The right hand side is a nonnegative real number.
\end{lemma}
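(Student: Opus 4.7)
The plan is to derive this as a direct specialization of Lemma~\ref{lem_vdC_generalized}, exactly as the paragraph between the two lemmas hints. I would take $\mathcal S \eqdef \{0, M, 2M, \ldots, (R-1)M\}$, so that $\lvert\mathcal S\rvert = R$ and $\max\mathcal S - \min\mathcal S = M(R-1)$. Substituting these values into the conclusion of Lemma~\ref{lem_vdC_generalized} immediately produces the prefactor
\[
\frac{N + M(R-1)}{R^2},
\]
which, up to the factor $1/R$ absorbed later, matches the prefactor in the claimed inequality.

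The remaining task is to re-express the double sum $\sum_{(\sO,\sL)\in\mathcal S^2}$ as a single sum over differences. I would parametrize $\sO = iM$ and $\sL = jM$ for $0\le i,j<R$, and shift the inner summation index by $\sO$ (replacing $n+\sO$ by $n$), turning the inner correlation into
\[
\sum_{n\in I\cap(I-(\sL-\sO))} z_n\overline{z_{n+(\sL-\sO)}},
\]
which depends on the pair $(i,j)$ only through $r\eqdef j-i$. Grouping by $r$ and counting pairs: for each integer $r$ with $\lvert r\rvert < R$, the number of $(i,j)\in\{0,\ldots,R-1\}^2$ with $j-i=r$ is exactly $R-\lvert r\rvert$. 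Therefore
\[
\sum_{(\sO,\sL)\in\mathcal S^2}\sum_{n\in (I-\sO)\cap(I-\sL)} z_{n+\sO}\overline{z_{n+\sL}}
=\sum_{\lvert r\rvert < R}(R-\lvert r\rvert)\sum_{n\in I\cap(I-Mr)} z_n\overline{z_{n+Mr}}.
\]

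Combining with the prefactor $(N+M(R-1))/R^2$ and writing $(R-\lvert r\rvert)/R = 1 - \lvert r\rvert/R$ yields the stated inequality, and nonnegativity of the right-hand side is inherited from Lemma~\ref{lem_vdC_generalized}. There is no real obstacle in this derivation; the only thing to be careful about is the index shift (ensuring the intersection $I\cap(I-Mr)$ is the correct domain after replacing $n+\sO$ by $n$) and the combinatorial count $R-\lvert r\rvert$ for both signs of $r$. Everything else is bookkeeping.
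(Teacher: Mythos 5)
Your derivation is correct and is exactly the route the paper takes (and explicitly signals in the sentence immediately following Lemma~\ref{lem_vdC_generalized}): set $\mathcal S=\{0,M,\dots,(R-1)M\}$, shift $n\mapsto n+\sO$ so the inner correlation depends only on $r=j-i$, count $R-\lvert r\rvert$ pairs per $r$, and absorb one factor of $R$ from $\lvert\mathcal S\rvert^2=R^2$ into $(1-\lvert r\rvert/R)$. No gap; the bookkeeping you flag (intersection domain after the shift, the count $R-\lvert r\rvert$) is handled correctly.
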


Iterated application of Lemma~\ref{lem_vdC} yields the following statement.
\begin{lemma}\label{lem_vdC_iterated}
Let $Q\geq 1$ be an integer.
Assume that $J$ is a finite nonempty interval in $\mathbb Z$,
and $g:J\rightarrow \{z\in\mathbb C:\lvert z\rvert=1\}$.
For all integers $M_0,\ldots,M_{Q-1}\ge1$ and $R\ge1$, we have
\begin{equation}\label{eqn_vdC_iterated}
\begin{aligned}
\Biggl\lvert
\frac1{\lvert J\rvert}
\sum_{n\in J}
g(n)
\Biggr\rvert^{2^Q}
&\ll
\frac1{R^Q}
\sum_{r\in\{1,\ldots,R-1\}^Q}
\bigl\lvert
K\bigl(r_0M_0,\ldots,r_{Q-1}M_{Q-1}\bigr)
\bigr\rvert
\\&\quad
+
\LandauO
\Biggl(
\frac{\bigl(M_0+\cdots+M_{Q-1}\bigr)R}{\lvert J\rvert}
+\frac1R
\Biggr),
\end{aligned}
\end{equation}
where
\[
K\bigl(m_0,\ldots,m_{Q-1}\bigr)
\eqdef
\frac1{\lvert J\rvert}
\sum_{n\in J}
\prod_{\varepsilon\in\{0,1\}^Q} 
(-1)^{\lvert\varepsilon\rvert}
g\Biggl(n+\sum_{0\leq\ell<Q}\varepsilon_\ell m_\ell\Biggr).
\]

The implied constant depends only on $Q$.
\end{lemma}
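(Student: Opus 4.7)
The plan is to prove the lemma by induction on $Q$, with Lemma~\ref{lem_vdC} serving both as the base case and as the engine driving each inductive step.

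For the base case $Q=1$, I would apply Lemma~\ref{lem_vdC} with $M=M_0$ to $\sum_{n\in J}g(n)$ and divide by $|J|^2$. The prefactor $(|J|+M_0(R-1))/R$ becomes $1/R+O(M_0/|J|)$; the $r=0$ term contributes $|J|^{-1}\sum|g|^2=1$, yielding an $O(1/R)$ error; and for each $0<|r|<R$ one extends the range $J\cap(J-M_0r)$ to $J$ (with $g$ set to $0$ outside $J$) at cost $O(M_0|r|/|J|)$, which telescopes to $O(M_0R/|J|)$ when summed over $r$. Pairing $r$ with $-r$ via $K(-m)=\overline{K(m)}$ absorbs a factor of $2$ into the implied constant, delivering~\eqref{eqn_vdC_iterated} for $Q=1$.

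For the inductive step, assume~\eqref{eqn_vdC_iterated} at level $Q$ with accumulated error $E_Q=O\bigl((M_0+\cdots+M_{Q-1})R/|J|+1/R\bigr)$, and set $A=\frac{1}{|J|}\sum_{n\in J}g(n)$. Squaring and using $(a+b)^2\leq 2a^2+2b^2$ yields
\[
|A|^{2^{Q+1}}\ll\Bigl(\tfrac{1}{R^Q}\sum_{r}|K(r_0M_0,\ldots,r_{Q-1}M_{Q-1})|\Bigr)^{\!2}+E_Q^{\,2}.
\]
Cauchy--Schwarz on the outer average of at most $R^Q$ nonnegative summands dominates the square by $\frac{1}{R^Q}\sum_r|K|^2$. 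I now apply the $Q=1$ base case once more, this time to the unit-modulus function $h_r$ on $J$ whose mean equals $K(r_0M_0,\ldots,r_{Q-1}M_{Q-1})$ (namely the appropriate product over $\varepsilon\in\{0,1\}^Q$ of shifted conjugates of $g$). Since $h_r(n)\overline{h_r(n+r_QM_Q)}$ reassembles into the full $\{0,1\}^{Q+1}$-product of the statement, one obtains
\[
|K(r_0M_0,\ldots,r_{Q-1}M_{Q-1})|^2\leq\tfrac{1}{R}\sum_{r_Q=1}^{R-1}|K(r_0M_0,\ldots,r_QM_Q)|+O(M_QR/|J|+1/R).
\]
Averaging over $(r_0,\ldots,r_{Q-1})\in\{1,\ldots,R-1\}^Q$ completes the inductive step.

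The main bookkeeping concern is that the errors must not amplify under squaring. One may assume $E_Q\leq 1$ (otherwise~\eqref{eqn_vdC_iterated} is trivial by $|A|\leq 1$), so $E_Q^{\,2}\leq E_Q$; together with the fresh contribution $O(M_QR/|J|+1/R)$ this reassembles into $E_{Q+1}=O((M_0+\cdots+M_Q)R/|J|+1/R)$, as required. The multiplicative constant doubles at each step and so is bounded by $2^Q$, which the $\ll$-notation legitimately absorbs (the implied constant is permitted to depend on $Q$). After $Q-1$ further applications from the base case one arrives at~\eqref{eqn_vdC_iterated} in full generality.
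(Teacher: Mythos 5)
Your proof is correct and follows essentially the same approach as the paper's: induction on $Q$, with Lemma~\ref{lem_vdC} supplying both the base case and, after squaring and Cauchy--Schwarz, the inductive step. The paper phrases the squaring step as "omit the mixed term and the square of the error term," which is exactly your $(a+b)^2 \le 2a^2 + 2b^2$ together with the reduction $E_Q^2 \le E_Q$ under the harmless normalization $E_Q \le 1$; and the paper applies Lemma~\ref{lem_vdC} directly to $|K|^2$ where you re-invoke the $Q=1$ case, which is the same computation. The only unstated bookkeeping point common to both presentations is the replacement of the two-sided sum over $1\le|r_Q|<R$ by a one-sided sum via $K(\ldots,-m_Q)=\overline{K(\ldots,m_Q)}$ (up to a boundary error of the already-present order $M_QR/|J|$), which you correctly note costs only a factor absorbed into the $Q$-dependent implied constant.
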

\begin{proof}
For $Q=1$, this statement easily follows from Lemma~\ref{lem_vdC}, where the summand $r_0=0$ yields the error $1/R$, and
omission of the condition $n+M_0r_0\in J$
causes an error bounded by $RM_0/\lvert J\rvert$.
Assume that the statement has already been established for some $Q\ge1$.

The left hand side of~\eqref{eqn_vdC_iterated} as well as the first term on the right hand side are bounded by $1$ in absolute value.
We may therefore omit, at the cost of a bigger implied constant, the mixed term  and the square of the error term when squaring the right hand side.

An application of the Cauchy--Schwarz inequality, followed by Lemma~\ref{lem_vdC}, and appending an error $RM_Q/\lvert J\rvert+1/R$ as in the base case finishes the proof by induction.
\end{proof}

A twofold application of the H\"older inequality yields the following lemma.
\begin{lemma}\label{lem_glycerol}
Assume that $I\subseteq \mathbb Z$ is a finite nonempty set, and
$f(n),g(n)\in\mathbb C$ for $n\in I$.
Set
\[
S\eqdef \sum_{n\in I}
f(n)\smallspace g(n). 
\]
Assume that $\mathcal P$ is a partition of $I$.
Then
\[
\lvert S\rvert
\leq
\sum_{P\in\mathcal P}
\sup_{n\in P}
\bigl\lvert f(n)\bigr\rvert 
\times
\sup_{P\in\mathcal P}
\sum_{n\in P}
\bigl\lvert g(n)\bigr\rvert.
\]
\end{lemma}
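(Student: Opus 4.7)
The plan is a direct two-step application of the extremal ($L^1$--$L^\infty$) case of H\"older's inequality, as the prefatory sentence to the statement already suggests. Since a partition $\mathcal P$ of $I$ is provided, the natural first move is to split the sum $S=\sum_{n\in I}f(n)g(n)$ along $\mathcal P$ and use the triangle inequality, obtaining
\[
\lvert S\rvert\leq \sum_{P\in\mathcal P}\Biggl\lvert\sum_{n\in P}f(n)\smallspace g(n)\Biggr\rvert.
\]

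Next I would apply the extremal H\"older bound \emph{inside each block} $P$, which is the elementary estimate $\lvert\sum_{n\in P}f(n)g(n)\rvert\leq \sup_{n\in P}\lvert f(n)\rvert\cdot\sum_{n\in P}\lvert g(n)\rvert$. This is the first of the two H\"older applications promised by the statement. After this step we have
\[
\lvert S\rvert\leq \sum_{P\in\mathcal P}\Bigl(\sup_{n\in P}\lvert f(n)\rvert\Bigr)\Bigl(\sum_{n\in P}\lvert g(n)\rvert\Bigr).
\]
Now the outer sum has the form $\sum_{P}a_P b_P$ with $a_P\eqdef\sup_{n\in P}\lvert f(n)\rvert$ and $b_P\eqdef\sum_{n\in P}\lvert g(n)\rvert$. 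I apply H\"older once more, this time \emph{at the level of the partition}, in the same extremal case: $\sum_{P}a_Pb_P\leq\bigl(\sum_{P}a_P\bigr)\bigl(\sup_{P}b_P\bigr)$. Pulling the second factor out as the supremum over blocks of the partition yields the claimed inequality.

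There is no substantive obstacle here: both steps are applications of the same elementary principle (the duality between $\ell^1$ and $\ell^\infty$), and the only care needed is to keep track of which factor plays the role of $\ell^1$ and which plays the role of $\ell^\infty$ at each stage. The point of separating the two applications, rather than just invoking the trivial bound $\lvert S\rvert\leq \sup_{n\in I}\lvert f(n)\rvert\cdot\sum_{n\in I}\lvert g(n)\rvert$, is that the first step localizes the supremum of $\lvert f\rvert$ to each block of $\mathcal P$, which is precisely what makes the lemma useful for the ``uncoupling'' argument in Section~\ref{sec_uncoupling}, where $f$ (a geometric sum $\varphi_H$) is nearly constant on each arithmetic progression of the partition.
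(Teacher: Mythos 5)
Your proof is correct and matches the route the paper indicates: the paper states the lemma without a written-out proof, introducing it as ``a twofold application of the H\"older inequality,'' and your argument realizes exactly that --- first the $\ell^\infty$--$\ell^1$ bound inside each block $P$, then the $\ell^1$--$\ell^\infty$ bound over the index set $\mathcal P$.
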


We will approximate the $1$-periodic function
\[\psi:x\mapsto x-\lfloor x\rfloor-\frac 12,\]
and, subsequently, the indicator function $\indicator_{I+\mathbb Z}$, where $I$ is an interval,
by trigonometric polynomials known as {\sc Vaaler} polynomials (see \namehighlight{Graham}--\namehighlight{Kolesnik}~\cite[Theorem A.6]{GrahamKolesnik1991}, and Vaaler~\cite{Vaaler1985}).
Let $\phi$ be the continuous extension to $[-1,1]$ of the function

\[ 
\phi(t)\eqdef \pi t\bigl(1-\lvert t\rvert\bigr)\cot\pi t+\lvert t\rvert.
\] 

\begin{lemma}\label{lem_vaaler2}
Let $H$ be a positive integer.
The trigonometric polynomial
\[ 
\psi_H(x)\eqdef-\sum_{\substack{h\in \mathbb Z\\1\leq\lvert h\rvert<H}}
\bigl(2\pi ih\bigr)^{-1}\phi\bigl(h/H\bigr)\e(hx).
\] 
satisfies
\[ 
\bigl\lvert \psi(t)-\psi_H(t)\bigr\rvert \leq
\kappa_H(t),
\] 
where
\[ 
\kappa_H(t)=\frac{1}{2H}\sum_{\substack{h\in\mathbb Z\\\lvert h\rvert<H}}\left(1-\frac{\lvert h\rvert}H\right)\e(ht).
\] 
\end{lemma}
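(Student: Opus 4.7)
The plan is to follow the classical Beurling--Selberg--Vaaler construction of extremal one-sided trigonometric approximations to the sawtooth function, exactly as carried out by Vaaler~\cite{Vaaler1985} and presented in Graham--Kolesnik~\cite[Theorem A.6]{GrahamKolesnik1991}.

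First I would produce two trigonometric polynomials $\psi_H^+$ and $\psi_H^-$, each of degree $<H$, satisfying
\[\psi_H^-(x) \leq \psi(x) \leq \psi_H^+(x) \qquad \text{for every real } x,\]
together with the explicit identity
\[\psi_H^+(x) - \psi_H^-(x) = \frac{1}{H}\sum_{\substack{h\in\mathbb Z \\ \lvert h\rvert<H}}\Bigl(1-\frac{\lvert h\rvert}{H}\Bigr)\e(hx) = 2\kappa_H(x).\]
The right hand side is a (scaled) Fej\'er kernel, hence nonnegative, and this resolves an $L^1$-extremal problem: among all trigonometric majorants/minorants of $\psi$ of degree $<H$, the pair $\psi_H^\pm$ minimises the integrated gap. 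Concretely, $\psi_H^\pm$ are built by periodizing the Beurling function $B$ --- the entire majorant of $\operatorname{sgn}$ of exponential type $2\pi$, extremal in the sense $\int_{\mathbb R}(B-\operatorname{sgn}) = 1$ --- and then applying Selberg's symmetrization to convert the one-sided approximation of $\operatorname{sgn}$ into a two-sided approximation of $\psi$.

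With $\psi_H^\pm$ in hand, I would set $\psi_H \eqdef (\psi_H^+ + \psi_H^-)/2$, and the bound follows immediately from sandwiching:
\[\bigl\lvert\psi(x) - \psi_H(x)\bigr\rvert \leq \frac{\psi_H^+(x) - \psi_H^-(x)}{2} = \kappa_H(x).\]
What then remains is to verify that this $\psi_H$ has precisely the Fourier coefficients $-(2\pi i h)^{-1}\phi(h/H)$ for $1 \leq \lvert h\rvert < H$ asserted by the lemma, and that the $h=0$ coefficient vanishes. The function $\phi(t) = \pi t(1-\lvert t\rvert)\cot(\pi t) + \lvert t\rvert$ enters directly from the Fourier coefficients of Vaaler's explicit majorants; the constant term vanishes because $\psi$ has mean zero while the Fej\'er kernel contributes symmetrically to $\psi_H^+$ and $\psi_H^-$.

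The main technical obstacle is the explicit extraction of the factor $\phi(h/H)$ from the Fourier expansion of the Beurling function, which demands careful manipulation of $\cot$ identities together with careful bookkeeping of the interpolation data used in the Selberg symmetrization. Since the construction is entirely classical and the statement matches the formulation in the cited references verbatim, in the actual write-up I would simply invoke~\cite[Theorem A.6]{GrahamKolesnik1991}, noting only that the convention there concerning $\psi$ agrees with the one adopted in Section~\ref{sec_notation}.
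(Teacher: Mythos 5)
Your proposal is correct and, in its conclusion, matches what the paper does: the lemma is stated without an independent proof, with a citation to Graham--Kolesnik~\cite[Theorem A.6]{GrahamKolesnik1991} and Vaaler~\cite{Vaaler1985}. The sketch of the Beurling--Selberg construction (majorant/minorant pair with Fej\'er-kernel gap, then taking the average) is an accurate account of the classical argument and would indeed be the content of a self-contained proof.
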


%

Note that $2H\kappa_H(t)$ is the ($1$-periodic) Fej\'er kernel, which attains only nonnegative real values.
From Lemma~\ref{lem_vaaler2}, we can easily obtain a trigonometric approximation of an indicator function $\indicator_{[\alpha,\beta)+\mathbb Z}$.
Applying the identity
\[\indicator_{[\alpha,\beta)+\mathbb Z}(x)
=(\beta-\alpha)+\psi(x-\beta)-\psi(x-\alpha) \qquad\mbox{($0\leq \alpha\leq\beta\leq 1$, $x\in\mathbb R$),}
\]
we obtain the following important corollary.
\begin{corollary}\label{cor_interval_detection_shift}
Assume that $0\leq \alpha\leq \beta\leq 1$, and that $H\geq1$ is an integer.
The trigonometric polynomials

\[ 
\begin{aligned}
\psi_{\alpha,\beta,H}(x)&\eqdef
\sum_{\substack{h\in \mathbb Z\\0\leq\lvert h\rvert<H}}
a_h(\beta-\alpha,H)
e\bigl(h(x-\alpha)\bigr),\\
\kappa_{\alpha,\beta,H}&\eqdef\sum_{\substack{h\in\mathbb Z\\-H<h<H}}
b_h(\alpha,\beta,H)\e(hx),
\end{aligned}
\] 
where
\[ 
\begin{aligned}
a_0(\delta,H)&\eqdef\delta;\\
a_h(\delta,H)&\eqdef
-\bigl(2\pi ih\bigr)^{-1}\phi\bigl(h/H\bigr)
\bigl(1-\e(-\delta h)\bigr)
&&\mbox{for }1\leq\lvert h\rvert<H;\\
b_h(\alpha,\beta,H)&
\eqdef\frac1{2H}\left(1-\frac{\lvert h\rvert}{H}\right)
\bigl(\e(-h\alpha)+\e(-h\beta)\bigr)
&&\mbox{for }\lvert h\rvert<H,
\end{aligned}
\] 
satisfy
\[ 
\bigl\lvert \indicator_{[\alpha,\beta)+\mathbb Z}(t)-\psi_{\alpha,\beta,H}(t)\bigr\rvert \leq
\kappa_{\alpha,\beta,H}(t).
\] 
In particular, the right hand side of this inequality is a nonnegative real number.
\end{corollary}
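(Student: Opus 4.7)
The plan is to reduce the approximation of $\indicator_{[\alpha,\beta)+\mathbb Z}$ to the approximation of $\psi$ from Lemma~\ref{lem_vaaler2}, using the identity
\[
\indicator_{[\alpha,\beta)+\mathbb Z}(t) = (\beta-\alpha) + \psi(t-\beta) - \psi(t-\alpha)
\]
which is stated immediately before the corollary and is easily verified by the three-case analysis on $t \bmod 1$ (namely $t \bmod 1 \in [0,\alpha)$, $[\alpha,\beta)$, $[\beta,1)$). Applying Lemma~\ref{lem_vaaler2} to each of $\psi(t-\beta)$ and $\psi(t-\alpha)$ separately and invoking the triangle inequality yields
\[
\bigl\lvert \indicator_{[\alpha,\beta)+\mathbb Z}(t) - \bigl[(\beta-\alpha) + \psi_H(t-\beta) - \psi_H(t-\alpha)\bigr] \bigr\rvert \leq \kappa_H(t-\beta) + \kappa_H(t-\alpha).
\]

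The remainder of the proof is to check that the approximating expression rearranges into $\psi_{\alpha,\beta,H}$ and that the error kernel rearranges into $\kappa_{\alpha,\beta,H}$. For the approximating polynomial, I would substitute the definition of $\psi_H$ from Lemma~\ref{lem_vaaler2}, factor out $\e(h(t-\alpha))$ using $\e(h(t-\beta)) = \e(h(t-\alpha)) \cdot \e(-h(\beta-\alpha))$, and then collect the two sums into a single sum with coefficient
\[
-(2\pi i h)^{-1}\phi(h/H)\bigl(1 - \e(-h(\beta-\alpha))\bigr) = a_h(\beta-\alpha,H)
\]
for $1 \leq \lvert h\rvert < H$, with the constant term $\beta - \alpha = a_0(\beta-\alpha,H)$ supplying the $h=0$ contribution. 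For the error kernel, a direct computation gives
\[
\kappa_H(t-\beta) + \kappa_H(t-\alpha) = \frac{1}{2H}\sum_{\lvert h\rvert < H}\left(1 - \frac{\lvert h\rvert}{H}\right)\bigl(\e(-h\beta)+\e(-h\alpha)\bigr)\e(ht),
\]
which matches the definition of $\kappa_{\alpha,\beta,H}(t)$ term by term.

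Finally, nonnegativity of $\kappa_{\alpha,\beta,H}(t)$ follows because $\kappa_H$ is (up to the factor $1/(2H)$) the Fej\'er kernel, hence pointwise nonnegative, so the sum of two shifted copies is also nonnegative. The main obstacle is nothing more than careful bookkeeping of the Fourier coefficients and signs when passing from the two individual $\psi_H$ approximations to a single trigonometric polynomial written in the coordinate $t - \alpha$; there is no analytic difficulty beyond what Lemma~\ref{lem_vaaler2} already supplies.
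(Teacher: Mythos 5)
Your proposal is correct and is exactly the route the paper takes (the paper itself says ``Applying the identity \dots we obtain the following important corollary,'' and you have simply carried out the two applications of Lemma~\ref{lem_vaaler2} with the triangle inequality and regrouped Fourier coefficients). One remark worth flagging: if you actually do the coefficient bookkeeping, you find that the coefficient of $\e(h(x-\alpha))$ in $(\beta-\alpha)+\psi_H(x-\beta)-\psi_H(x-\alpha)$ is $+(2\pi i h)^{-1}\phi(h/H)\bigl(1-\e(-h(\beta-\alpha))\bigr)$, i.e.\ the \emph{negative} of the paper's stated $a_h(\beta-\alpha,H)$, so there is a sign slip in the paper's definition of $a_h$ (harmless in all downstream uses, since only $\lvert a_h\rvert$ is ever needed); your claim that the coefficients ``match'' $a_h$ reproduces this typo rather than catching it.
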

%
Assume that $d\ge1$ is an integer and $\alpha=(\alpha(m))_{m\in I}$ a sequence in $\mathbb R^d$, where $I\subseteq \mathbb N$ is an interval.
Let $\mathbb T^d$ be the $d$-dimensional torus:
in this paper, this will just be the set $[0,1)^d$.
An \emph{interval} in $\mathbb T^d$ is a subset obtained from an axis-parallel box in $\mathbb R^d$, reduced modulo $1\times\cdots\times 1$.
For integers $M$ such that $[0,M)\subseteq I$,
let us define the discrepancy
\[ 
D_M(\alpha)
\eqdef
\sup_{J\subseteq \mathbb T^d\text{ interval}}
\Biggl\lvert
\frac1M\sum_{0\le m<M}
\indicator_J\bigl(\alpha(m)\bigr)
-
\lambda_d(J)
\Biggr\rvert,
\] 
where $\lambda_d$ is the $d$-dimensional Lebesgue measure.

The inequality of Erd\H{o}s--Tur\'an--Koksma is well known.
\begin{lemma}\label{lem_ETK}
Let $d$ be a positive integer.
For vectors $h,k\in\mathbb R^d$, define
\[\mu(h)\eqdef\prod_{0\leq i<d}\max\bigl(1,\lvert h_i\rvert\bigr)
\quad\mbox{and}\quad
h\cdot k\eqdef\sum_{0\leq i<d}h_ik_i.\]
There exists a constant $C=C(d)$ such that for all integers $N\geq 1$, all sequences $x=(x_j)_{0\leq j<N}$ in $\mathbb R^d$, and all integers $H\geq 1$ we have
\begin{equation}\label{eqn_ETK}
D_N(x)\leq C
\left(\frac 1H+
\sum_{\substack{h\in \mathbb Z^d\\0<\lVert h\rVert_\infty<H}}
\frac 1{\mu(h)}
\left\lvert\frac 1N\sum_{0\leq n<N}\e(h\cdot x_n)
\right\rvert
\right).
\end{equation}
\end{lemma}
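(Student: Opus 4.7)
The plan is to approximate $\indicator_J$ for an arbitrary box $J\subseteq\mathbb{T}^d$ by the tensor product of the one-dimensional Vaaler polynomials supplied by Corollary~\ref{cor_interval_detection_shift}, evaluate this trigonometric polynomial along the sequence $(x_n)_{0\leq n<N}$, and read off the bound from its Fourier expansion. By splitting coordinate-wise one may assume $J=\prod_{i=0}^{d-1}[\alpha_i,\beta_i)+\mathbb{Z}^d$ with $0\leq\alpha_i\leq\beta_i\leq 1$. For each $i$ set $\indicator_i:=\indicator_{[\alpha_i,\beta_i)+\mathbb{Z}}$, $\psi_i:=\psi_{\alpha_i,\beta_i,H}$ and $\kappa_i:=\kappa_{\alpha_i,\beta_i,H}$, so that $|\indicator_i(t)-\psi_i(t)|\leq\kappa_i(t)$ and $\kappa_i\geq 0$, and form the tensor product $\Psi(x):=\prod_{i}\psi_i(x_i)$.

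A one-variable-at-a-time telescoping identity together with the pointwise bound $|\psi_j|\leq\indicator_j+\kappa_j\leq 1+\kappa_j$ yields
\[
\bigl|\indicator_J(x)-\Psi(x)\bigr|\leq\sum_{i=0}^{d-1}\kappa_i(x_i)\prod_{j\neq i}\bigl(1+\kappa_j(x_j)\bigr).
\]
The polynomial $\Psi$ has Fourier expansion $\Psi(x)=\sum_{\lVert h\rVert_\infty<H}c_h\,\e(h\cdot x)$ with $c_h=\prod_i a_{h_i}(\beta_i-\alpha_i,H)$; in particular $c_0=\prod_i(\beta_i-\alpha_i)=\lambda_d(J)$, and the elementary estimate $|\phi(t)|\leq 1$ on $[-1,1]$ combined with the explicit form of $a_h$ in Corollary~\ref{cor_interval_detection_shift} gives $|c_h|\leq 1/\mu(h)$ for $h\neq 0$. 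Averaging $\Psi$ over the sequence therefore produces $\lambda_d(J)$ plus exactly the exponential sum appearing on the right-hand side of~\eqref{eqn_ETK}.

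For the remaining error, expand the right-hand side of the telescoping inequality into its Fourier series: it is a finite sum of trigonometric polynomials on $\mathbb{T}^d$ of degree $<H$ in each coordinate, each product containing at least one factor $\kappa_i$. Since $\kappa_i$ has constant Fourier coefficient $1/H$ (read off from the formula for $b_h$ in Corollary~\ref{cor_interval_detection_shift}), the constant Fourier coefficient of each such product is $\LandauO(1/H)$, which furnishes the $1/H$-summand in~\eqref{eqn_ETK}; the non-constant coefficients are again bounded by $C(d)/\mu(h)$ via the explicit formulas for $a_h$ and $b_h$, and feed into the same exponential sum. Putting together the main term $\lambda_d(J)$, the exponential-sum contribution, and the $\LandauO(1/H)$ constant-term error yields~\eqref{eqn_ETK}. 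The only non-routine aspect is the bookkeeping: verifying uniformly in $d$ that the Fourier coefficients of both $\Psi$ and the finitely many error polynomials generated by the telescoping expansion satisfy the single bound $|c_h|\leq C(d)/\mu(h)$, while their constant terms all add up to $\LandauO(1/H)$.
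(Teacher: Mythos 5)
The paper does not actually prove Lemma~\ref{lem_ETK}; it is quoted as well known, and the remark following Lemma~\ref{lem_Koksma_Hlawka} merely indicates that one proof goes via Vaaler approximation. Your argument supplies exactly that proof, and it is correct: the tensorized Vaaler polynomial $\Psi$ gives the main Fourier term with $|c_h|\leq 1/\mu(h)$, the telescoping bound $|\indicator_J-\Psi|\leq\sum_i\kappa_i\prod_{j\neq i}(1+\kappa_j)$ is a nonnegative trigonometric polynomial whose constant coefficient is $\LandauO_d(1/H)$, and its nonconstant coefficients at $h$ with $0<\lVert h\rVert_\infty<H$ are $\leq C(d)/\mu(h)$ because $|b_{h_i}|\leq 1/H\leq 1/\max(1,|h_i|)$ — the "bookkeeping" you flag goes through with no obstruction.
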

We will also need the Koksma--Hlawka inequality.
\begin{lemma}\label{lem_Koksma_Hlawka}
Let $f:[0,1]\rightarrow\mathbb R$ have bounded variation $V(f)$ in the sense of Hardy and Krause,
and $x=(x_n)_{0\leq n<N}$ a sequence in $[0,1)$.
Then
\[
\Biggl\lvert
\frac1N\sum_{0\leq n<N}f(x_n)-\int_0^1f(t)\,\mathrm dt\Biggr\rvert
\leq V(f)D_N(x).
\]
\end{lemma}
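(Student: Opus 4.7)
The plan is to reduce the inequality, via one-dimensional Riemann--Stieltjes integration by parts, to a pointwise comparison between the empirical distribution function of the sequence and the identity. First I would introduce
\[
F_N(t)\eqdef\frac1N\#\bigl\{0\leq n<N:x_n\leq t\bigr\},\qquad t\in[0,1],
\]
so that $\frac1N\sum_{0\leq n<N}f(x_n)=\int_{[0,1]}f(t)\,dF_N(t)$. Subtracting the Lebesgue integral $\int_0^1 f(t)\,dt$ and viewing both sides jointly as Stieltjes integrals, the task reduces to bounding $\int_{[0,1]}f(t)\,d\bigl(F_N(t)-t\bigr)$.

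Next, applying Riemann--Stieltjes integration by parts yields
\[
\int_{[0,1]}f(t)\,d\bigl(F_N(t)-t\bigr)
=\bigl[f(t)\bigl(F_N(t)-t\bigr)\bigr]_0^1
-\int_{[0,1]}\bigl(F_N(t)-t\bigr)\,df(t),
\]
and the boundary term vanishes: $F_N(1)=1$ and $F_N(0^-)=0$ because $x_n\in[0,1)$ for every $n$. For the remaining integral, the key pointwise estimate $|F_N(t)-t|\leq D_N(x)$ for every $t\in[0,1]$ follows directly from the definition of $D_N$ upon taking $J=[0,t)\subseteq\mathbb T^1$, whose Lebesgue measure is $t$. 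Hence
\[
\Biggl\lvert\int_{[0,1]}\bigl(F_N(t)-t\bigr)\,df(t)\Biggr\rvert
\leq\sup_{t\in[0,1]}\bigl\lvert F_N(t)-t\bigr\rvert\cdot V(f)
\leq V(f)\,D_N(x),
\]
which is the asserted inequality.

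The only mildly delicate point is to make the Stieltjes integration by parts rigorous when both $F_N$, which jumps at each $x_n$, and $f$, which may itself have jumps, are discontinuous. In one dimension this is entirely standard: one first establishes the inequality for continuous $f$ of bounded variation, where no issue arises, and then extends it to general $f$ of bounded variation by approximation, using that $\int_0^1 f\,dt$ is insensitive to modifications of $f$ on a countable set. Equivalently, a consistent choice of one-sided values (say left- or right-continuous representatives of $F_N$ and $f$) in the Stieltjes integrals circumvents any common-jump ambiguity. This is the main, though minor, obstacle; once it is handled the result follows directly from the above chain of manipulations.
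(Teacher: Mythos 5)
The paper does not prove Lemma~\ref{lem_Koksma_Hlawka}: it is the classical one-dimensional Koksma inequality, stated and used as a known result, so there is no paper argument to compare against. Your proof is the standard textbook derivation and is correct: writing the empirical mean as $\int_{[0,1]} f\,dF_N$, integrating by parts against $F_N(t)-t$, observing that the boundary terms vanish, and bounding the remaining Stieltjes integral by $\sup_t\lvert F_N(t)-t\rvert\cdot V(f)\leq D_N(x)V(f)$. Two small remarks. First, the pointwise bound $\lvert F_N(t)-t\rvert\leq D_N(x)$ actually only requires the star discrepancy (intervals anchored at $0$), which is dominated by the paper's $D_N$ taken over all intervals of the torus, so the inequality you use is a fortiori valid. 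Second, the delicacy you flag around Riemann--Stieltjes integration by parts when both $F_N$ and $f$ may jump at the same point is real but routine; one clean way to sidestep it entirely, avoiding any approximation argument, is to apply Abel summation directly after sorting the $x_n$ in increasing order, which gives the same $\sup_t\lvert F_N(t)-t\rvert\cdot V(f)$ bound without invoking Stieltjes integration at all. Your argument is sound as written.
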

\begin{remark}
The bounds for the discrepancy that we get from the Erd\H{o}s--Tur\'an--Koksma inequality are indeed also valid for our ``rotation invariant'' discrepancy $D_N$. One proof of this inequality uses Vaaler approximation as above, and we easily see that we lose at most a constant factor when we admit general intervals in $\mathtt T^d$, as opposed to intervals $\prod_{0\leq i<d}[x_i,y_i)$, where $0\leq x_i\leq y_i\leq 1$.
\end{remark}

By means of the following standard lemma we may extend the range of a summation, introducing only a logarithmic factor into our estimates.
\begin{lemma}\label{lem_vinogradov}
Let $x\leq y\leq z$ be real numbers and $a_n\in\mathbb C$ for
$n\in [x,z)\cap\mathbb Z$.
Then
\begin{equation*}
\left\lvert\sum_{x\leq n<y}a_n\right\rvert
\leq
\int_0^1{\min\left\{\lceil y\rceil-x,\frac1{2\lVert \xi\rVert}\right\}
         \left\lvert\sum_{x\leq n<z}a_n\e(n\xi)\right\rvert}
\,\mathrm d\xi.
\end{equation*}

\end{lemma}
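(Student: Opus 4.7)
The plan is to realize the restriction of the summation range to $[x,y)$ as convolution with a Dirichlet kernel, so that the ``short'' sum on the left becomes a Fourier integral involving the ``long'' exponential sum on the right. The central identity is $\int_0^1\e((m-n)\xi)\,\mathrm d\xi=[m=n]$ for $m,n\in\mathbb Z$, which yields the representation $\indicator_{[x,y)}(n)=\int_0^1 D(\xi)\e(-n\xi)\,\mathrm d\xi$ for integers $n$, where $D(\xi)\eqdef\sum_{\lceil x\rceil\le m<\lceil y\rceil}\e(m\xi)$ is a partial geometric sum.

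From here I would first write $\sum_{x\le n<y}a_n=\sum_{x\le n<z}a_n\indicator_{[x,y)}(n)$ (the longer outer range is harmless since $\indicator_{[x,y)}$ kills the extra terms), substitute the Fourier representation of the indicator, and swap summation and integration to obtain the exact identity
\[
\sum_{x\le n<y}a_n=\int_0^1 D(\xi)\sum_{x\le n<z}a_n\e(-n\xi)\,\mathrm d\xi.
\]
Taking absolute values and combining the two standard bounds on the geometric sum $D$ --- namely $\lvert D(\xi)\rvert\le\lceil y\rceil-\lceil x\rceil\le\lceil y\rceil-x$ (trivial, by the number of summands) and $\lvert D(\xi)\rvert\le 1/\lvert\sin\pi\xi\rvert\le 1/(2\lVert\xi\rVert)$ (closed form of the geometric progression, together with $\lvert\sin\pi\xi\rvert\ge 2\lVert\xi\rVert$) --- yields the minimum appearing in the statement.

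The only cosmetic discrepancy is that the conclusion features $\e(n\xi)$ whereas the derivation produces $\e(-n\xi)$. Substituting $\xi\mapsto 1-\xi$ in the integral (equivalently, reducing $-\xi$ modulo $1$, using $1$-periodicity of the integrand) flips the sign of the phase while leaving $\lVert\xi\rVert$ and $\lceil y\rceil-x$ invariant, thereby finishing the proof. There is no substantial obstacle: the whole argument is the standard ``Dirichlet kernel inversion'' trick, and all of its ingredients are elementary.
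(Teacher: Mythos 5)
Your proof is correct and is essentially the same as the paper's: both realize the restriction to $[x,y)$ via the orthogonality relation $\int_0^1\e(k\xi)\,\mathrm d\xi=[k=0]$, swap sum and integral to extract a geometric sum, and then bound that geometric sum by the minimum of the trivial estimate and a $\lVert\xi\rVert^{-1}$ estimate. The only (immaterial) differences are cosmetic: you place the phase as $\e(-n\xi)$ and restore the stated sign by $\xi\mapsto 1-\xi$, and you get the $1/(2\lVert\xi\rVert)$ bound from $\lvert\sin\pi\xi\rvert\ge 2\lVert\xi\rVert$ rather than from the paper's equivalent inequality $\cos 2\pi x\le 1-8\lVert x\rVert^2$.
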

\begin{proof}
Since
\[\int_0^1\e(k\xi)\,\mathrm d\xi=\begin{cases}1,&k=0;\\0,&k\in\mathbb Z\setminus\{0\},\end{cases}\] it follows that
\begin{equation*}
\begin{aligned}
\sum_{x\leq n<y}a_n
&=\sum_{x\leq n<z}a_n\sum_{x\leq m<y}\delta_{n-m,0}
=\int_0^1\sum_{x\leq m<y}\e(-m\xi)
 \sum_{x\leq n<z}a_n\e(n\xi)\,\mathrm d\xi.
\end{aligned}
\end{equation*}
Using a geometric series, we obtain
\[\left\lvert \sum_{x\leq n<y}a_n\right\rvert\leq
\int_0^1\min\Bigl(\bigl\lvert[x,y)\cap\mathbb Z\bigr\rvert,2\bigl \lvert1-\e(-\xi)\bigr \rvert^{-1}\Bigr)
 \left\lvert\sum_{x\leq n<z}a_n\e(n\xi)\right\rvert\,\mathrm d\xi.
\]
By the inequality
\begin{equation}\label{eqn_cos_poly_bound}
\cos 2\pi x\leq 1-8\lVert x\rVert^2,
\end{equation}
valid for all real $x$, we have
\[\lvert 1-\e(-\xi)\rvert^2
=2(1-\cos 2\pi \xi)\geq
16\lVert\xi\rVert^2,\]
from which the statement follows.
The bound~\eqref{eqn_cos_poly_bound} can be shown easily, considering $x\in\{0,1/2\}$, where both sides are identical,
$x\in(0,1/4)$, where the first derivative of $1-8x^2-\cos2\pi x$ is positive, and $x\in(1/4,1/2)$, where the second derivative is negative.
\end{proof}

The following elementary ``large sieve equality'' will be used to uncouple the summation variable $j$, see~\eqref{eqn_S2_S3}.
\begin{lemma}\label{lem_full}
Let $M\geq 1$ be an integer, and $(a_m)_{0\leq m<M}$ a family in $\mathbb C$.
Then
\[ 
\sum_{0\leq h<M}
\left\lvert
\sum_{0\leq m<M}
a_m\e\bigl(-hmM^{-1}\bigr)
\right\rvert^2
=
M\sum_{0\leq m<M}\lvert a_m\rvert^2.
\] 
\end{lemma}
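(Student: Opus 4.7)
The statement is Parseval's identity for the discrete Fourier transform on $\mathbb Z/M\mathbb Z$, so the plan is simply to expand the square, swap the order of summation, and use the standard orthogonality relation for additive characters.

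First I would write the left hand side as a double sum by expanding $|\cdot|^2$:
\[
\sum_{0\leq h<M}
\Biggl\lvert
\sum_{0\leq m<M}
a_m\e\bigl(-hm/M\bigr)
\Biggr\rvert^2
=
\sum_{0\leq h<M}
\sum_{0\leq m<M}
\sum_{0\leq m'<M}
a_m\overline{a_{m'}}
\e\bigl(-h(m-m')/M\bigr).
\]
Interchanging the three finite sums (which is trivially permitted), one arrives at
\[
\sum_{0\leq m,m'<M}a_m\overline{a_{m'}}\sum_{0\leq h<M}\e\bigl(-h(m-m')/M\bigr).
\]

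Next I would evaluate the inner sum by means of a geometric series. For $k\in\mathbb Z$, the sum $\sum_{0\leq h<M}\e(hk/M)$ equals $M$ if $M\mid k$, and equals $\bigl(1-\e(k)\bigr)/\bigl(1-\e(k/M)\bigr)=0$ otherwise. Applied with $k=-(m-m')$ and using that $m,m'\in\{0,\ldots,M-1\}$ (so $M\mid m-m'$ forces $m=m'$), the inner sum equals $M\cdot[m=m']$. Substituting this collapses the double sum over $(m,m')$ to the diagonal, yielding $M\sum_{0\leq m<M}\lvert a_m\rvert^2$, which is exactly the right hand side.

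There is no genuine obstacle here: the result is a one-line consequence of character orthogonality on the cyclic group $\mathbb Z/M\mathbb Z$, and the only care needed is to observe that the range $0\le m,m'<M$ ensures $m\equiv m'\pmod M$ holds only in the diagonal case $m=m'$, so that no wrap-around contributions appear.
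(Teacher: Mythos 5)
Your proof is correct and follows essentially the same route as the paper: expand the square, interchange the finite sums, and evaluate the inner sum over $h$ via the orthogonality relation (geometric series), using that $0\le m,m'<M$ forces $m=m'$ on the diagonal. You simply spell out the geometric-series step in more detail than the paper does.
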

\begin{proof}
Expanding the square and interchanging summations, we see that the left hand side equals
\begin{equation*}
\sum_{0\leq \nAprime<M}
\sum_{0\leq \nBprime<M}
a_{\nAprime}\overline{a_{\nBprime}}
\sum_{0\leq h<M}
\e\bigl(-h(\nAprime-\nBprime)M^{-1}\bigr)
=
M
\sum_{0\leq m<M}
a_m\overline{a_m}.
%
\qedhere
\end{equation*}

\end{proof}
In the proof of our main theorem, we will make essential use of the truncated sum-of-digits function~\cite{MauduitRivat2009,MauduitRivat2010}.
For a set $I\subseteq \mathbb R$, let
\[ 
\digitsum^I(n)\eqdef\digitsum\bigl(n^I\bigr)=\sum_{\ijkl\in I\cap\mathbb N}\digit_\ijkl(n).
\] 
Let $L\ge0$ be an integer.
The function                                       
$\digitsum^{[0,L)}:\mathbb N\rightarrow\mathbb N$, 
which is the $2^L$-periodic continuation of the restriction $\digitsum\vert_{\{0,\ldots,2^L-1\}}$, will play a particularly important role in our proof.

The following ``carry lemma'' will enable us to discard the most significant digits in our sum-of-digits functions.
\begin{lemma}\label{lem_carry}
Let $\lambda\geq 0$, $r\geq 0$, and $A,B$ be integers such that $0\leq A\leq B$.
Then
\begin{align*}
\hspace{3em}&\hspace{-3em}  
\#\bigl\{n\in [A,B):       
\bigl\lfloor n^3/2^\lambda\bigr\rfloor\neq
\bigl\lfloor (n+r)^3/2^\lambda\bigr\rfloor
\bigr\}
\\&\leq 
\bigl((B-A)B^2/2^\lambda+1\bigr)\bigl(\bigl(3B^2r+3Br^2+r^3\bigr)/(3A^2)+1\bigr).
\end{align*}
In particular, for each subset $M\subseteq\mathbb N$ we have
\[ 
\begin{aligned}
\hspace{3em}&\hspace{-3em}  
\#\bigl\{n\in [A,B):       
\digitsum^M\bigl((n+r)^3\bigr)-\digitsum^M\bigl(n^3\bigr) 
\neq \digitsum^{M\cap[0,\lambda)}\bigl((n+r)^3\bigr) 
-\digitsum^{M\cap[0,\lambda)}\bigl(n^3\bigr)\bigr\}        
\\&\leq C
r\frac{B^2}{A^2}\bigl((B-A)B^22^{-\lambda}+1\bigr)
\end{aligned}
\] 
with $C=10/3$. 
\end{lemma}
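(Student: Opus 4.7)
My plan is to double-count the pairs $(n,k)\in\mathbb Z^2$ with $n\in[A,B)$ and $n^3<k\smallspace 2^\lambda\le(n+r)^3$, noting that one such $k$ exists precisely when $n$ is ``bad''.

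\emph{Fixing $k$.} The relevant $n$ satisfy both $(k\smallspace 2^\lambda)^{1/3}-r\le n<(k\smallspace 2^\lambda)^{1/3}$ and $n^3>k\smallspace 2^\lambda-\bigl((n+r)^3-n^3\bigr)$. Using $(n+r)^3-n^3\le 3B^2r+3Br^2+r^3$ together with the mean value theorem applied to $x\mapsto x^{1/3}$ (whose derivative is at most $(3A^2)^{-1}$ on $[A^3,\infty)$), these $n$ lie in an interval of real length at most $(3B^2r+3Br^2+r^3)/(3A^2)$, hence contain at most $(3B^2r+3Br^2+r^3)/(3A^2)+1$ integers. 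This is the second factor of the product.

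\emph{Counting valid $k$.} The step function $n\mapsto\lfloor n^3/2^\lambda\rfloor$ is nondecreasing on $[A,B)$, and each valid $k$ corresponds to an upward jump; the number of such $k$ is therefore at most $\lfloor(B-1)^3/2^\lambda\rfloor-\lfloor A^3/2^\lambda\rfloor+1$. Writing $B^3-A^3=(B-A)(B^2+AB+A^2)$ and observing that the cross terms $AB$ and $A^2$ (smaller than $B^2$) are already absorbed into the $3B^2$-piece of the numerator of the second factor produced above, one arrives at the first factor $(B-A)B^2/2^\lambda+1$. Multiplying the two factors yields the claim.

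\textbf{Part 2: from block boundaries to truncated digit sums.} If $\lfloor n^3/2^\lambda\rfloor=\lfloor (n+r)^3/2^\lambda\rfloor$, then $n^3$ and $(n+r)^3$ have identical binary digits at all indices $j\ge\lambda$, so for any $M\subseteq\mathbb N$,
\begin{equation*}
\digitsum^M\bigl((n+r)^3\bigr)-\digitsum^M\bigl(n^3\bigr)
=\digitsum^{M\cap[0,\lambda)}\bigl((n+r)^3\bigr)-\digitsum^{M\cap[0,\lambda)}\bigl(n^3\bigr).
\end{equation*}
Hence the exceptional set of the second display is contained in that of the first. Applying Part 1 and the elementary inequality
\begin{equation*}
\bigl(3B^2r+3Br^2+r^3\bigr)/(3A^2)+1\le\tfrac{10}{3}\smallspace r\smallspace B^2/A^2,
\end{equation*}
valid for $1\le r\le B$ and $1\le A\le B$ (the subleading terms $r^2B/A^2$ and $r^3/(3A^2)$, together with the additive $1$, are each dominated by $rB^2/A^2$), yields the announced constant $C=10/3$.

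\textbf{Main obstacle.} The delicate point is extracting the precise leading constant in the first factor: a naive bound on the upward jumps of $n\mapsto\lfloor n^3/2^\lambda\rfloor$ gives $(B^3-A^3)/2^\lambda+1$, which exceeds $(B-A)B^2/2^\lambda+1$ by roughly a factor of $3$. Recovering the cleaner form requires tracking that the excess (arising from $B^2+AB+A^2$ versus $B^2$) can be absorbed into the numerator $3B^2r+3Br^2+r^3$ of the second factor rather than being double-counted; this, combined with the MVT estimate on $[A^3,\infty)$ giving the denominator $3A^2$, is the central bookkeeping step.
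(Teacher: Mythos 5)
Your approach to both parts is essentially the same as the paper's. In Part~1 you count pairs $(n,k)$: first the number of $n$ whose cube lands in a given interval of length $L=3B^2r+3Br^2+r^3$ below $k2^\lambda$ (bounded by $L/(3A^2)+1$ via the minimum spacing $3A^2$ of consecutive cubes, equivalently the MVT estimate), then the number of admissible $k$ (bounded via the total range of $n^3$). Part~2 is also the same: contraposition plus the elementary inequality that absorbs the additive $1$ into $rB^2/A^2$.

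The one genuine gap is the ``absorption'' step at the end of Part~1. Your bookkeeping, like the paper's, yields $(B^3-A^3)/2^\lambda+1$ for the first factor, and $B^3-A^3=(B-A)(B^2+AB+A^2)\ge(B-A)B^2$, so this is \emph{larger} than the stated $(B-A)B^2/2^\lambda+1$. You then claim the excess ``can be absorbed into the $3B^2$-piece of the numerator of the second factor''. That is not a valid algebraic move: a product $(X+1)(Y+1)$ cannot be shrunk to $(X'+1)(Y+1)$ with $X'<X$ merely because $Y$ was obtained with some slack; the two factors multiply, and surplus in one cannot be traded for tightness in the other. You correctly identify this as the delicate point in your ``Main obstacle'' paragraph, but the argument you offer does not close it.

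You should know, however, that the paper's own proof has precisely the same gap: it too ends with $(B^3-A^3)/2^\lambda+1$ and then asserts the stated bound without comment. So you have in effect found a small quantitative slip in the lemma. A clean fix, for you and for the paper, is to state the first factor as $(B^3-A^3)/2^\lambda+1$ (or $3(B-A)B^2/2^\lambda+1$) and take $C=10$ rather than $10/3$ in the second part; this changes nothing where the lemma is applied, since only the order of magnitude is used there.
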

\begin{proof}
Set $L\eqdef 3B^2+3Br^2+r^3$.
Since $(n+r)^3-n^3\leq L$ for all  
$n\in[A,B)$,                       
we only have to exclude the integers $n$ satisfying
\[ 
n^3\in [-L,0)+2^\lambda\mathbb Z.       
\] 
Only in this case it can happen that the binary digits at indices $\ge\lambda$ change (that is,
$\lfloor n^3/2^\lambda\rfloor\neq\lfloor(n+r)^3/2^\lambda\rfloor$)
when passing from $n^3$ to $(n+r)^3$.

The increasing sequence
$a=(n^3)_{A\leq n<B}$ satisfies
$a_{B-1}-a_A\leq B^3-A^3$.
It follows that $a$ hits at most
$(B^3-A^3)/2^\lambda+1$ intervals of the form  
$I_k=[-L,0)+k2^\lambda$ (where $k\geq0$). 

Moreover, $a_{n+1}-a_n\geq 3A^2$ for $A\leq n<B$, and thus the sequence $(a_n)_n$ stays in the same interval $I_k$ for at most $L/(3A^2)+1$ indices $n$.

The number of exceptional intervals times the maximal number of indices lying in such an interval gives 
$\bigl((B^3-A^3)/2^\lambda+1\bigr)\bigl(\bigl(3B^2r+3Br^2+r^3\bigr)/(3A^2)+1\bigr)$
exceptional integers. This proves the first part.

Concerning the second part, we note that the condition
\[\digitsum^M((n+r)^3)-\digitsum^M(n^3)                              
=\digitsum^{M\cap[0,\lambda)}((n+r)^3)-\digitsum^{M\cap[0,\lambda)}(n^r) 
\]
is satisfied if $n^3$ and $(n+r)^3$ have the same binary digits with indices $\ge\lambda$.
The statement easily follows by contraposition in the case $1\le r\leq B-A$.
For $r=0$ the statement is vacuous.
For $r>B-A$ the right hand side of the statement to be proved is greater than $B-A$, which is a trivial upper bound for the left hand side,
and therefore the statement is true also in this case.
\end{proof}

The following lemma is just a variant of summation by parts, and we state the proof for completeness.
\begin{lemma}\label{lem_summation_by_parts}
Let $R$ be a ring, $M\ge0$ an integer, and $a_m,b_m\in R$ for $0\leq m<M$.
Then
\[ 
\sum_{0\leq m<M}a_mb_m
=
b_0\sum_{0\leq m<M}a_m
+\sum_{1\leq \ell<M}
\bigl(b_\ell-b_{\ell-1}\bigr)
\sum_{\ell\leq m<M}a_m.
\] 
\end{lemma}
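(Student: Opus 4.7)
The plan is to prove the identity by telescoping $b_m$ and then swapping the order of summation, which is the standard Abel summation argument adapted to the asymmetric partial-sum form required here (partial sums running to the right end $M$, rather than from the left).

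First, I would write, for each $0\le m<M$, the telescoping representation
\[
b_m=b_0+\sum_{1\le\ell\le m}\bigl(b_\ell-b_{\ell-1}\bigr),
\]
which is valid in any ring $R$ (the sum is empty when $m=0$, giving $b_0$). Substituting this into $\sum_{0\le m<M}a_mb_m$ and distributing yields
\[
\sum_{0\le m<M}a_mb_m
=b_0\sum_{0\le m<M}a_m
+\sum_{0\le m<M}a_m\sum_{1\le\ell\le m}\bigl(b_\ell-b_{\ell-1}\bigr).
\]
The first term already matches the target; only the double sum remains.

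Next, I would interchange the two summations in the double sum. The set of pairs $(\ell,m)$ satisfying $0\le m<M$ and $1\le\ell\le m$ is exactly the set of pairs with $1\le\ell<M$ and $\ell\le m<M$. Since $R$ need not be commutative, one has to be careful to keep $b_\ell-b_{\ell-1}$ on the left of $a_m$ throughout; the telescoping representation above was chosen precisely to place $(b_\ell-b_{\ell-1})$ on the left, matching the target identity. After the exchange we obtain
\[
\sum_{0\le m<M}a_m\sum_{1\le\ell\le m}\bigl(b_\ell-b_{\ell-1}\bigr)
=\sum_{1\le\ell<M}\bigl(b_\ell-b_{\ell-1}\bigr)\sum_{\ell\le m<M}a_m,
\]
which combines with the first term to give the claimed identity.

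There is no real obstacle here: the proof is a two-line calculation, and the only mild subtlety is ordering of factors in a general ring, which is handled by the choice of where to put the telescoping expansion. No convergence or analytic issue arises, since all sums are finite.
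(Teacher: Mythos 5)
Your approach — telescoping $b_m=b_0+\sum_{1\le\ell\le m}(b_\ell-b_{\ell-1})$ and then interchanging the order of summation over the triangle $\{(\ell,m):1\le\ell\le m<M\}$ — is a valid alternative to the paper's proof, which instead argues by induction on $M$ (adding the term $a_Mb_M$ and observing that the correction terms telescope to zero). The two routes are of comparable length; yours is closer to the textbook presentation of Abel summation, while the paper's induction avoids manipulating a double sum.

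However, your remark about noncommutativity is incorrect. Substituting the telescoping identity into $a_mb_m$ gives $a_mb_0+\sum_{1\le\ell\le m}a_m(b_\ell-b_{\ell-1})$, with $a_m$ on the \emph{left} of each $b$-term, so after interchanging summations you would obtain
\[
\sum_{0\le m<M}a_mb_m=\Bigl(\sum_{0\le m<M}a_m\Bigr)b_0
+\sum_{1\le\ell<M}\Bigl(\sum_{\ell\le m<M}a_m\Bigr)(b_\ell-b_{\ell-1}),
\]
not the stated form with the $b$-factors on the left. The telescoping expansion does not magically switch the sides. In fact the lemma as stated is simply false for a noncommutative ring: already at $M=1$ it reads $a_0b_0=b_0a_0$. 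Both your proof and the paper's proof implicitly assume commutativity (the paper's base case $M=1$ has the same issue), and that is the only case used in the paper, so no harm is done — but you should delete the claim that your choice of expansion handles the noncommutative ordering, since it does not.
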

\begin{proof}
This is trivial for $M\le1$.
By induction, we have for $M\ge1$
\begin{align*}
\sum_{0\leq m<M+1}a_mb_m
&=
b_0\sum_{0\leq m<M}a_m
+\sum_{1\leq \ell<M}
\bigl(b_\ell-b_{\ell-1}\bigr)
\sum_{\ell\leq m<M}a_m
+
a_Mb_M
\\&=
b_0\sum_{0\leq m<M+1}a_m
+\sum_{1\leq \ell<M+1}
\bigl(b_\ell-b_{\ell-1}\bigr)
\sum_{\ell\leq m<M+1}a_m
\\&+
a_Mb_M
-b_0a_M
-\sum_{1\leq \ell<M}
\bigl(b_\ell-b_{\ell-1}\bigr)
a_M
-(b_M-b_{M-1})a_M.
\end{align*}
The last line is a telescoping sum and equals zero.
\end{proof}

At several occasions, we apply Dirichlet's approximation theorem, where we require the factor to be odd.

\begin{lemma}[Odd elimination]\label{lem_odd_elimination}
For nonnegative integers $\ell$, $\kappa$, $\mu$, $\omega$ such that
$\ell\geq \kappa$,
we define the property $\mathcal P$ by the equivalence
\[
\begin{aligned}
\mathcal P(\omega,\ell,\kappa,\mu)
\quad\Longleftrightarrow\quad 
&\forall\omega_0\in\{0,\ldots,2^\mu-1\}
\;\exists M\in\{1,3,5,\ldots,2^{5\kappa+7}-1\}:
\\&
\bigl(M\bigl(2^\mu\omega+\omega_0\bigr)\bigr)^{[\ell-\kappa,\ell)}=0.
\end{aligned}
\]
Assume that $\ell,\kappa\ge1$ are integers and 
$\mu\eqdef\ell-4\kappa-4\ge0$.
Then 
\begin{equation}\label{eqn_odd_elimination}
\begin{aligned}
\hspace{8em}&\hspace{-8em}
\#\Bigl\{
\omega\in\{0,\ldots,2^{4\kappa+4}-1\}:
\mathcal P(\omega,\ell,\kappa,\mu)
\Bigr\}
\geq 2^{3\kappa+4}\bigl(2^\kappa-1\bigr).
\end{aligned}
\end{equation}
\end{lemma}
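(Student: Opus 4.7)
The plan is to show that at most $2^{3\kappa+4}$ values of $\omega\in\{0,\ldots,2^{4\kappa+4}-1\}$ fail $\mathcal P(\omega,\ell,\kappa,\mu)$; since $2^{4\kappa+4}-2^{3\kappa+4}=2^{3\kappa+4}(2^\kappa-1)$, this yields the stated lower bound.

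I would first dispose of the small $\omega$ trivially: if $\omega<2^{3\kappa+4}$, then for every $\omega_0\in\{0,\ldots,2^\mu-1\}$ the integer $n\coloneqq 2^\mu\omega+\omega_0$ satisfies $n<2^\mu\cdot 2^{3\kappa+4}=2^{\ell-\kappa}$, whence the binary digits of $n$ at indices in $[\ell-\kappa,\ell)$ all vanish and the choice $M=1$ verifies the required identity. Hence every bad $\omega$ must lie in $[2^{3\kappa+4},2^{4\kappa+4})$, a set of $2^{3\kappa+4}(2^\kappa-1)$ integers; it remains to prove that at most $2^{3\kappa+4}$ of them are bad.

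For each $\omega$ in this range I would invoke Dirichlet's approximation theorem on the ratio $\omega/2^{4\kappa+4}$ with $Q=2^{\kappa+2}$, obtaining coprime integers $p,q$ with $1\leq q\leq 2^{\kappa+2}$ and $|q\omega-p\cdot 2^{4\kappa+4}|\leq 2^{3\kappa+2}$. In the favourable case, $q$ is odd and the residue $r\coloneqq q\omega-p\cdot 2^{4\kappa+4}$ lies in $[0,2^{3\kappa+2}]$; then $M=q$ is odd, satisfies $M\leq 2^{\kappa+2}<2^{5\kappa+7}$, and for every $\omega_0\in[0,2^\mu)$ we have
\[
M(2^\mu\omega+\omega_0)\equiv 2^\mu r+M\omega_0\pmod{2^\ell},\qquad 2^\mu r+M\omega_0\leq 2^{\ell-\kappa-2}+2^{\ell-3\kappa-2}<2^{\ell-\kappa},
\]
so $(M(2^\mu\omega+\omega_0))^{[\ell-\kappa,\ell)}=0$ uniformly in $\omega_0$ and $\mathcal P$ holds.

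The main obstacle will be treating the unfavourable sub-cases, namely $q$ odd with $r<0$ or $q$ even. In the ``negative sign'' sub-case I would exploit the generous bound $2^{5\kappa+7}$ on $M$ (as opposed to the $2^{\kappa+2}$ coming from Dirichlet) to replace $M=q$ by $M=q+2q'$, where $q'$ is a secondary Dirichlet approximator chosen to flip the sign of the residue. In the ``even $q$'' sub-case, writing $q=2^sq_0$ with $q_0$ odd, I would compensate for the lost $2$-adic factors by multiplying $q_0$ by an auxiliary odd integer $t$ of size up to $2^{4\kappa+5}$, yielding $M=tq_0\in[1,2^{5\kappa+7})$ with the correct small residue; the large exponent $5\kappa+7$ is engineered precisely to accommodate these corrections. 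The truly hard part is the bookkeeping of the residual exceptional $\omega$'s for which neither correction succeeds: these are the $\omega$'s whose $2$-adic Dirichlet profile remains obstinately even even after adjustment, and a pigeonhole/enumeration argument, essentially counting the $\omega$'s clustering near the multiples of $2^{4\kappa+3}$, should bound their number by $2^{3\kappa+4}$, completing the proof.
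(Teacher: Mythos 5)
Your reduction to bounding the bad set by $2^{3\kappa+4}$, the trivial handling of $\omega<2^{3\kappa+4}$, and the favourable-case computation (odd $q$, $r\ge 0$) are all sound, but the proof has a genuine gap exactly where the difficulty lies, namely in the corrections for even $q$ and for $r<0$, which you only gesture at. The ``even $q$'' fix cannot work as sketched: write $q=2^sq_0$ with $q_0$ odd. Since $2^s$ divides both $q\omega$ and $2^{4\kappa+4}$, the residue $r$ is itself a multiple of $2^s$, and all one can deduce from $q\omega\equiv r$ modulo $2^{4\kappa+4}$ is a congruence for $q_0\omega$ modulo the smaller modulus $2^{4\kappa+4-s}$; the residue $q_0\omega\bmod 2^{4\kappa+4}$ is of the form $r/2^s+c\cdot 2^{4\kappa+4-s}$ with an \emph{unknown} $c\in\{0,\ldots,2^s-1\}$, so the binary digits of $q_0(2^\mu\omega+\omega_0)$ near the top of the target window $[\ell-\kappa,\ell)$ are completely uncontrolled. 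Multiplying $q_0$ by an auxiliary odd $t$ cannot recover those bits --- the obstacle is lost information, not insufficient factor size --- so the generous exponent $5\kappa+7$ is not, as you suggest, the remedy. The ``$r<0$'' correction and the concluding ``pigeonhole near multiples of $2^{4\kappa+3}$'' step are similarly undeveloped; the latter is not visibly connected to any concrete structure of the bad set.

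The paper resolves the odd-ness problem with a different device that your sketch does not anticipate. It applies Dirichlet once per block of $2^\kappa$ consecutive $\omega$'s (to the left endpoint $S$ only), obtaining some $M<2^{\kappa+2}$, \emph{not necessarily odd}, with $(M2^\mu S)^{[\ell-\kappa-2,\ell)}=0$; a size argument then propagates, for all but at most one $\omega$ per block, the twin conditions that the slightly wider window $[\ell-\kappa-1,\ell)$ is zeroed \emph{and} that the digits just below it do not all vanish. Odd-ness is manufactured afterwards: shifting $M$ by a power of $2$ places a single $\tL$ at index $\ell-\kappa-1$ for $2K(2^\mu\omega+\omega_0)$, and adding multiples of $2K(2^\mu\omega+\omega_0)$ to $2^\mu\omega+\omega_0$ ``walks'' the digits on $[\ell-\kappa,\ell)$ step by step through all residues, yielding an odd factor $2\tilde MK+1<2^{5\kappa+7}$ that zeroes the window. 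The nonvanishing digit just below the window is the lever that makes this walk controllable, and this is precisely the idea your proposal is missing; without it or an equivalent, I do not see how to complete your per-$\omega$ Dirichlet approach.
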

\begin{proof}
Assume that $I\subseteq\{1,\ldots,2^{4\kappa+4}-1\}$ is an interval of length $2^{\kappa}$.
We first show that all but at most one $\omega\in I$ 
have the property that
\begin{equation}\label{eqn_Dirichlet}
\begin{aligned}
&\forall\omega_0\in\{0,\ldots,2^\mu-1\}
\;\exists M\in\{1,2,\ldots,2^{\kappa+2}-1\}:
\\&\bigl(M\bigl(2^\mu\omega+\omega_0\bigr)\bigr)^{[\ell-\kappa-1,\ell)}=0
\ \textsf{and}\ 
\bigl(M\bigl(2^\mu\omega+\omega_0\bigr)\bigr)^{[\mu,\ell-\kappa-1)}\ne0.
\end{aligned}
\end{equation}
Let $S\in\{0,\ldots,2^{4\kappa+4}-2^\kappa\}$ (the left endpoint of the interval $I$).
Choose  $M\in\{1,\ldots,2^{\kappa+2}-1\}$, by Dirichlet's approximation theorem, in such a way that
  $\bigl(M2^\mu S\bigr)^{[\ell-\kappa-2,\ell)}=0$.
Assume that we also have
$\bigl(M2^\mu S\bigr)^{[\mu,\ell-\kappa-1)}=0$.
By size restrictions 
it follows that~\eqref{eqn_Dirichlet} holds for $S+1\leq \omega\leq S+2^{\kappa}-1$, and all
$\omega_0\in\{0,\ldots,2^\mu-1\}$.
Each $\omega$ for which~\eqref{eqn_Dirichlet} fails is therefore succeeded by $2^\kappa-1$ indices $\omega\bmod 2^{4\kappa+4}$ for which~\eqref{eqn_Dirichlet} holds, which proves the statement.

We need to find an odd factor. Suppose that~\eqref{eqn_Dirichlet} holds,
where $0<\omega<2^{4\kappa+4}$, and let $0\le\omega_0<2^\mu$.
Let $j\le\ell-\kappa-2$ be the index of the highest $1$ in the binary expansion of $\bigl(M\bigl(2^\mu\omega+\omega_0\bigr)\bigr)^{[\mu,\ell)}$,
and set
$K\eqdef 2^{\ell-\kappa-2-j}M$.
It follows that
\[\bigl(2K\bigl(2^\mu\omega+\omega_0\bigr)\bigr)^{[\ell-\kappa-1,\ell)}=1\]
and $K<2^{4\kappa+5}$.
From this we see that multiples of $2K$ vary the digits in $[\ell-\kappa,\ell)$ step-wise, each step occurring once or twice.
It follows that there exists $\tilde M<2^{\kappa+1}$ such that
\[\bigl(\bigl(2\tilde MK+1\bigr)\bigl(2^\mu\omega+\omega_0\bigr)\bigr)^{[\ell-\kappa,\ell)}=0.\]
Clearly, $2\tilde MK+1<2^{5\kappa+7}$.
We have shown that in each interval $I$ as above we can find at most one exceptional $\omega$.
In $\{0,\ldots,2^{4\kappa+4}-1\}$ there is therefore not enough space for more than $2^{3\kappa+4}$ exceptions.
The proof is complete.
\end{proof}


%
%



\section{Linearizing the cubic problem}\label{sec_linearize}
\subsection{Statement of the result}
The aim of Section~\ref{sec_linearize} is to prove the \emph{key result} of this paper, which is Proposition~\ref{prp_linearize} below.
Roughly speaking, in this section  we will transform the problem of estimating $\Szero$ into a linear one, while introducing a geometric sum as additional factor.
The precise statement of the main result of this section involves some definitions.
Let us begin with the abbreviation
\begin{equation}\label{eqn_J_def}
J\eqdef\{0,\ldots,2^\nu-1\}.
\end{equation}
For 
\begin{equation}\label{eqn_S9_requirements}
\begin{aligned}
&u,\nu,\rho,\tau,\zeta,\nLO,\nOL,\nOO,\sO,\sL,m,r\in\mathbb Z
\quad\mbox{such that}\\
&u\geq\nu\geq\rho\geq\tau\geq\zeta\geq0,\\
&\nLO\in\JLO\eqdef\{0,\ldots,2^{\rho-\tau}-1\},\\
&\nOL\in\JOL\eqdef\{0,\ldots,2^{\tau-\zeta}-1\},\\
&\nOO\in\JOO\eqdef\{0,\ldots,2^\zeta-1\},\\
&\sO,\sL,m,r\ge0,
\end{aligned}
\end{equation}
we define
\begin{equation}\label{eqn_S9_def}
\begin{aligned}
\hspace{1cm}&\hspace{-1cm}
\Seight(u,\nu,\rho,\tau,\zeta,\nLO,\nOL,\nOO,\sO,\sL,m,r)
\\&\eqdef
\frac1{2^{\nu-\rho}}
\sum_{0\leq\nLL<2^{\nu-\rho}}
\prod_{\substack{\varepsilon\in\{0,1\}\\\ts\in\{\sO,\sL\} }}
\e\Bigl(
  \tfrac12\smallspace\digitsum^{[0,u)}\Bigl(\bigl(\nLL 2^\rho
\\&\hspace{2em}+\nLO2^\tau
+\nOL2^\zeta+\ts\smallspace m2^\tau
+\nOO+\varepsilon r
\bigr)^3\Bigr)
\Bigr).
\end{aligned}
\end{equation}

Moreover,
for integers $\lambda,H\ge0$, and $t\in\mathbb R$, we set
\begin{equation}\label{eqn_varphi_def}
\varphi_H(t)\eqdef
\sum_{0\leq h<H}
\e(ht),
\end{equation}

\begin{equation}\label{eqn_K_def}
\begin{aligned}
K(\lambda,\tau,\zeta,\sO,\sL,\nLO,\nOL,m)&\eqdef
\nLO
\frac{6\smallspace\nOL(\sO-\sL)m}{2^{\lambda-2\tau-\zeta}}
\\&
\hspace{2em}+
\frac{3\smallspace \nOL^2(\sO-\sL)m}{2^{\lambda-\tau-2\zeta}}
+\frac{3\smallspace \nOL\bigl(\sO^2-\sL^2\bigr)m^2}{2^{\lambda-2\tau-\zeta}}.
\end{aligned}
\end{equation}

Throughout this section we will assume that~\eqref{eqn_S9_requirements} is satisfied, and that
\begin{align}
B,H,S\ge1,\lambda\ge u\quad\mbox{are integers},\label{eqn_additional_requirements0}\\
\multiple:\JOL\rightarrow\{1,\ldots,B\}.\label{eqn_additional_requirements01}
\end{align}

We gather the error terms arising in the proof of our key proposition (which is Proposition~\ref{prp_linearize} below), and group them into five terms.

\begin{itemize}
\item
The first error term comes from trigonometric approximation of an indicator function of an interval.
\begin{equation}\label{eqn_Ezero_def}
\Ezero\bigl(\nu,u,\lambda,H\bigr)
\eqdef
\frac{2^{\lambda-u}}{H}
\sum_{\substack{h\in2^{\lambda-u}\mathbb Z\\\lvert h\rvert<H}}
\left\lvert
\frac1{2^\nu}
\sum_{0\leq n<2^\nu}
\e\left(\frac{hn^3}{2^\lambda}\right)
\right\rvert
\end{equation}

\item
The second error comes from our first application of van der Corput's inequality (in the form of Lemma~\ref{lem_vdC_generalized}).
It arises when we replace the condition $n\in(I-m\sO)\cap(I-m\sL)$ by $n\in I$:
\[ 
\Eone(\nu,\tau,B,H,S)\eqdef
\frac{SBH}{2^{\nu-\tau}}.
\] 

\item
The third error is introduced by the secondary term $S_E$ in an application of summation by parts.

\begin{equation}\label{eqn_E2_def}
\Etwo\eqdef
SH^22^{\nu-\rho}\frac1{2^\tau}\sum_{0\leq\nO<2^\tau}
\biggl\lVert
\frac{\tmultiple(\nO)\nO}{2^{\lambda-\tau-\rho}}
\biggr\rVert
\end{equation}

\item
The fourth error accounts for the replacement of $K$ by $K'$ in the argument of $\varphi_H$ (see~\eqref{eqn_K_def},~\eqref{eqn_Kprime_def}), that is, the lowest $\zeta$ digits are removed, and therefore $\nOO$ disappears.
\begin{equation}\label{eqn_E3_def}
\Ethree\eqdef H^22^\zeta\bigl(SB2^{\rho+\tau-\lambda}+S^2B^22^{2\tau-\lambda}\bigr).
\end{equation}
Note that the factor $H^2$ stems from the maximal slope of the geometric sum $\varphi_H$.
\item
The fifth error term arises from our second application of van der Corput's inequality, on the sum over $\nOO\in\JOO$:
omitting the condition $0\leq \nOO+r<2^\zeta$ from a sum,
as well as the summand $r=0$,
and discarding the digits with indices in $[\lambda,\infty)$,
we obtain the contribution
\begin{equation}\label{eqn_E4m_def}
\Efourm\eqdef\frac{R}{2^\zeta}+\frac1R+\frac{R2^{2\nu}}{2^\lambda}.
\end{equation}
Taking an average over a geometric sum into account, we arrive at the contribution
\begin{equation}\label{eqn_E4_def}
\Efour\eqdef\biggl(\frac{R}{2^\zeta}+\frac1R+\frac{R2^{2\nu}}{2^\lambda}\biggr)\log\nu.
\end{equation}
\end{itemize}

Our \emph{key result} is the following.
It essentially reduces the statement of Theorem~\ref{thm_main} to a linear problem.

\begin{proposition}\label{prp_linearize}
Assume that $B,H,S\ge1$ are integers, and let~\eqref{eqn_S9_requirements},~\eqref{eqn_additional_requirements0} be satisfied.
Assume that 
\begin{equation}\label{eqn_additional_requirements1}
2\tau-\rho\ge0,\quad\lambda\geq u,\quad 2^{\lambda-u}\mid H.
\end{equation}
Then
\[ 
\begin{aligned}
\bigl\lvert \Szero(\nu,\xi)\bigr\rvert^4&\leq 
\Snine+\LandauO\bigl(\Ezero+\Eone+\Etwo+\Ethree+\Efour\bigr),
\end{aligned}
\] 
where
\begin{equation}\label{eqn_Sa_def}
\begin{aligned}
\Snine&\eqdef
\frac1{S^2}
\sum_{\sO,\sL}
\frac1{2^{\rho-\zeta}}
\sum_{\substack{\nOL\in\JOL\\\nLO\in\JLO}}
\Biggl(
  \frac1R\sum_{\lvert r\rvert<R}\frac1{2^\zeta}\sum_{\nOO}
\bigl\lvert \Seight\bigl(
\nLO,\nOL,\nOO,\sO,\sL,\multiple(\nOL),r\bigr)\bigr\rvert\Biggr)^{1/2}
\\&\hspace{3cm}\times\bigl\lvert \varphi_H\bigl(K\bigl(\lambda,\tau,\zeta,\sO,\sL,\nLO,\nOL,\multiple(\nOL)\bigr)\bigr)\bigr\rvert.
\end{aligned}
\end{equation}
\end{proposition}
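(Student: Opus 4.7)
The plan is to reduce $|\Szero(\nu,\xi)|^4$ to $\Snine$ by two successive applications of van der Corput's inequality, interspersed with a Vaaler-type detection of the ``critical'' digit window $[2\rho,\lambda)$ together with a Dirichlet approximation and summation-by-parts argument that transfers the residual nonlinearity into the geometric sum $\varphi_H$. The four-fold product over $(\varepsilon,\ts)\in\{0,1\}\times\{\sO,\sL\}$ inside $\Seight$ is furnished by the two vdC applications, matching the fourth power on the left-hand side.

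First, I split the summation variable as $n=\nLL 2^\rho+\nLO 2^\tau+\nOL 2^\zeta+\nOO$ in accordance with~\eqref{eqn_S9_requirements} and use Lemma~\ref{lem_carry} to replace $\digitsum(n^3)$ by the truncated $\digitsum^{[0,u)}(n^3)$. I then use Corollary~\ref{cor_interval_detection_shift} to detect the further truncation of digits in the window $[2\rho,\lambda)$ by a trigonometric polynomial of degree $H$; the Vaaler error is absorbed into $\Ezero$, and the divisibility $2^{\lambda-u}\mid H$ from~\eqref{eqn_additional_requirements1} ensures that only frequencies $h\in 2^{\lambda-u}\mathbb Z$ contribute. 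The hypothesis $2\tau-\rho\geq 0$, together with the sizes implicit in~\eqref{eqn_S9_requirements}, guarantees that the purely cubic contribution of $\nLL$ to $n^3$ essentially lies above the truncation level $\lambda$, so that on the relevant window only terms of degree at most $2$ in $\nLL$ survive.

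Next I apply the generalized van der Corput inequality (Lemma~\ref{lem_vdC_generalized}) with the arithmetic progression $\mathcal S=\{0,\multiple(\nOL)\cdot 2^\tau,\ldots,(S-1)\multiple(\nOL)\cdot 2^\tau\}$ to introduce the shift pair $(\sO,\sL)$; discarding the domain condition $n\in(I-\sO\cdot)\cap(I-\sL\cdot)$ contributes the error $\Eone$. After expanding $(n+\ts\multiple(\nOL)2^\tau)^3$, the exponential carries, for each Vaaler frequency $h$, a phase that is quadratic in $\nLL$ with a coefficient essentially proportional to $h(\sO-\sL)\multiple(\nOL)/2^\lambda$ times lower-index data. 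The key move: applying Dirichlet's approximation theorem to this quadratic coefficient, followed by the reverse summation-by-parts formula (Lemma~\ref{lem_summation_by_parts}) applied to the outer $\nLL$-sum, separates the $h$-dependent factor --- which evaluates as the geometric sum $\varphi_H$ --- from the remaining sum-of-digits part. The tail of the summation by parts contributes $\Etwo$, and replacing the true argument of $\varphi_H$ by the simpler expression $K$ of~\eqref{eqn_K_def} (discarding the $\nOO$-dependence and further lower-order pieces, using the $H^2$ Lipschitz constant of $\varphi_H$) produces $\Ethree$.

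Finally, a second application of van der Corput (Lemma~\ref{lem_vdC}) on the $\nOO$-average with shift $r\in(-R,R)$ squares once more and yields the remaining factor $\varepsilon\in\{0,1\}$ in the product defining $\Seight$; omitting the range condition on $\nOO+r$, the term $r=0$, and the digits at indices $\geq\lambda$ contribute $\Efourm$. Since $\varphi_H$ no longer depends on the innermost variables after the previous step, it factors outside the new vdC average, and a Cauchy--Schwarz inequality produces the square root appearing in $\Snine$; the $r$-average of $\varphi_H$ contributes only a logarithmic mean, upgrading $\Efourm$ to $\Efour$. I expect the main obstacle to be the bookkeeping in the middle step: the Dirichlet approximation must be chosen compatibly with the summation by parts so that $\varphi_H$ decouples cleanly, and the levels $u$, $\lambda$, $2\rho$ must respect the size conditions of~\eqref{eqn_additional_requirements1} and~\eqref{eqn_S9_requirements} simultaneously for every error term to remain non-trivially small.
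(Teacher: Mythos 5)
Your plan captures the intended architecture — Vaaler detection, two applications of van der Corput, and a Dirichlet/summation-by-parts step that peels off the geometric sum $\varphi_H$ — and the latter steps (first vdC with $\mathcal S=\{sm2^\tau:0\le s<S\}$ giving $\Eone$, summation by parts giving $\Etwo$, replacing $K'$ by $K$ giving $\Ethree$, second vdC on $\nOO$ with Cauchy--Schwarz producing the square root in $\Snine$, and the average of $\varphi_H$ upgrading $\Efourm$ to $\Efour$) agree with the paper. But the beginning of your argument has a genuine gap. You propose to ``use Lemma~\ref{lem_carry} to replace $\digitsum(n^3)$ by $\digitsum^{[0,u)}(n^3)$'' \emph{before} any van der Corput step. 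The carry lemma is a statement about the \emph{difference} $\digitsum^M((n+r)^3)-\digitsum^M(n^3)$; it bounds the number of $n$ for which this difference changes when $M$ is replaced by $M\cap[0,\lambda)$. It has no content without a shift $r$, and the shift only appears after a vdC application. One cannot truncate $\digitsum(n^3)$ in the single sum $\sum_n\e(\tfrac12\digitsum(n^3)+n\xi)$ this way — this would change the value of the sum by a non-negligible amount. Moreover, even where the carry lemma applies, it removes digits at indices $\geq\lambda$, giving $\digitsum^{[0,\lambda)}$, not $\digitsum^{[0,u)}$; and if you truncated to $[0,u)$ first (with $u\leq\lambda$), there would be no digits left in $[2\rho,\lambda)\subseteq[u,\lambda)$ for your next step (Vaaler detection) to detect — the ordering is internally inconsistent. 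In the paper, the Vaaler detection of the window $[u,\lambda)$ (not $[2\rho,\lambda)$; that is only the approximate window under the eventual choice of parameters) comes \emph{first}, and the carry lemma is applied \emph{last}, after the second vdC has introduced the small shift $r<R$ for which it is actually applicable; the paper's overview warns explicitly that the order in the actual proof is reversed ``for technical reasons''.

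Two smaller imprecisions: after the first vdC, the resulting phase in the exponential is \emph{linear} in $\nLL$, not quadratic — the $\nL^3$-contribution drops modulo $1$ because $3\tau\ge\lambda$, and the $\nL^2$-contributions cancel in the vdC difference $(\nL+\sO m)^2-(\nL+\sL m)^2$; this linearity is precisely what makes summation by parts effective. And the logarithmic mean that converts $\Efourm$ into $\Efour$ comes from averaging $\varphi_H$ over the outer variables $(\nLO,\nOL)$ (which $\Efourm$ does not depend on), not from an ``$r$-average of $\varphi_H$'' as you write.
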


The parameters appearing as arguments of $\Seight$ will be chosen later, in Sections~\ref{sec_uniform},~\ref{sec_finishing}
The values to be chosen includes the important collection $\multiple$ of odd integers found with the help of a variant of Dirichlet's approximation theorem (Lemma~\ref{lem_odd_elimination}).

Proposition~\ref{prp_linearize} is the first milestone in our proof of Theorem~\ref{thm_main}, and the remainder of section~\ref{sec_linearize} will be concerned with its proof.
The implied constants appearing in estimates in this proof are absolute (that is, uniform in the variables appearing in~\eqref{eqn_S9_requirements},~\eqref{eqn_additional_requirements0},~\eqref{eqn_additional_requirements1} and satisfying these restrictions).
As a first step in the treatment of $\Szero$, we will partition the summation index set $J$ according to the binary digits with indices in the \emph{critical interval} $[u,\lambda)$.
\subsection{Detecting digits in the critical interval}\label{sec_detection}
We set $\kappaO\eqdef \lambda-u$, which is the length of our critical interval.
The sets
\[P(u,\lambda,j)
\eqdef
\bigl\{
n\in\mathbb N:
\bigl\lfloor n^3/2^\lambda\bigr\rfloor\equiv j\bmod 2^\kappaO
\bigr\}
\]
of natural numbers having given binary digits in this interval,
form a partition of $\mathbb N$,
where $j\in\{0,\ldots,2^\kappaO-1\}$.
Running through all digit sequences of length $\kappaO$ by using the variable $j$, we obtain
\begin{equation}\label{eqn_S1_digit_blocks}
\Szero(u,\lambda,\nu,\xi)=
\sum_{0\leq j<2^\kappaO}
(-1)^{\digitsum(j)}
\frac1{2^\nu}
\sum_{n\in J\cap P(u,\lambda,j)}
\e\Bigl(
  \tfrac12\smallspace\digitsum^{\mathbb N\setminus[u,\lambda)}\bigl(n^3\bigr)
+n\xi\Bigr)
\end{equation}
for all integers $\nu\ge0$, $0\le u\le \lambda$, and for all real numbers $\xi$,
where $J$ is defined in~\eqref{eqn_J_def}.
Eliminating the complications introduced by the additional sum over $j$ will constitute one of the main difficulties of the proof of the main theorem.

In order to detect the property $n\in P(u,\lambda,j)$ in an ``analytical'' way, we use trigonometric approximation.
More specifically, we apply Corollary~\ref{cor_interval_detection_shift}.
Note that
\[
n\in P(u,\lambda,j)\quad\mbox{if and only if}\quad
\left\{\frac{n^3}{2^\lambda}\right\}
\in \left[\frac{j}{2^\kappaO},\frac{j+1}{2^\kappaO}\right).
\]
Let us define
\[ 
I\eqdef\mathbb N\setminus[u,\infty),\quad
\alpha(j)\eqdef \frac j{2^\kappaO},\quad
\beta(j)\eqdef \frac{j+1}{2^\kappaO}.
\] 
Then~\eqref{eqn_S1_digit_blocks} can be written as
\begin{equation}\label{eqn_S2_transformation}
\begin{aligned}
\hspace{4em}&\hspace{-4em}
\Szero(\nu,u,\lambda,\xi)=
\sum_{\substack{0\leq j<2^\kappaO}}
(-1)^{\digitsum(j)}
\frac1{2^\nu}
\sum_{n\in J}
\e\Bigl(\tfrac12\digitsum^{\Iold}\bigl(n^3\bigr)+n\xi\Bigr)
\indicator_{[\alpha(j),\beta(j))+\mathbb Z}\left(\frac{n^3}{2^\lambda}\right)
\end{aligned}
\end{equation}
for all integers $\nu\ge0$, $0\le u\le \lambda$, and for all real numbers $\xi$.

Next, we replace the indicator functions by trigonometric polynomials, using Corollary~\ref{cor_interval_detection_shift}.
For real numbers $\alpha,\beta,x$, we note the trivial identity
\begin{equation}\label{eqn_trivial_triple}
\begin{aligned}
\hspace{4em}&\hspace{-4em}
\indicator_{[\alpha,\beta)+\mathbb Z}(x)
&=
A_{\alpha,\beta,H}(x)+
\Bigl(\indicator_{[\alpha,\beta)+\mathbb Z}(x)
-A_{\alpha,\beta,H}(x)\Bigr).
\end{aligned}
\end{equation}

The \emph{main term} $A_{\alpha,\beta,H}(x)$
will lead to the sum $\Stwo$ below, while the term in parentheses is treated as an error term.
Its contribution to the sum $\Szero(\nu,\lambda)$ is
\begin{equation}\label{eqn_trivial_triple_applied}
\begin{aligned}
\Szero^{(0)}(\nu,u,\lambda,H)&\eqdef
\sum_{\substack{0\leq j<2^\kappaO}}
(-1)^{\digitsum(j)}
\frac1{2^\nu}
\sum_{n\in J}
\e\Bigl(\tfrac12\digitsum^{\Iold}\bigl(n^3\bigr)\bigr)
\Bigr)
\\&\hspace{2em}\times
\left(\indicator_{[\alpha(j),\beta(j))+\mathbb Z}\left(\frac{n^3}{2^\lambda}\right)
-A_{\alpha(j),\beta(j),H}\left(\frac{n^3}{2^\lambda}\right)\right).
\end{aligned}
\end{equation}
Using Corollary~\ref{cor_interval_detection_shift}, we may estimate the absolute value of the difference, written in the second line of~\eqref{eqn_trivial_triple_applied}, by a nonnegative trigonometric polynomial.
This will allow us to discard this absolute value again. Using also the triangle inequality and the bound 
\[\bigl\lvert b_h\bigl(2^{-\kappaO},H\bigr)\bigr\rvert\leq 1/H,\]
we get
\[ 
\begin{aligned}
\bigl\lvert \Szero^{(0)}\bigl(\nu,u,\lambda,H\bigr)\bigr\rvert
&\leq
\sum_{0\leq j<2^\kappaO}
\sum_{\substack{h\in\mathbb Z\\\lvert h\rvert<H}}
\e\left(-\frac{hj}{2^{\kappaO}}\right)
b_h\bigl(2^{-\kappaO},H\bigr)
\frac1{2^\nu}
\sum_{n\in J}
\e\left(\frac{hn^3}{2^\lambda}\right)
\\&=
\sum_{\substack{h\in\mathbb Z\\\lvert h\rvert<H}}
b_h\bigl(2^{-\kappaO},H\bigr)
\hspace{0.5em}
\sum_{0\leq j<2^\kappaO}
\e\left(-\frac{hj}{2^{\kappaO}}\right)
\hspace{0.5em}
\frac1{2^\nu}
\sum_{n\in J}
\e\left(\frac{hn^3}{2^\lambda}\right)
\\&=
2^\kappaO
\sum_{\substack{h\in2^\kappaO\mathbb Z\\\lvert h\rvert<H}}
b_h\bigl(2^{-\kappaO},H\bigr)
\frac1{2^\nu}
\sum_{n\in J}
\e\left(\frac{hn^3}{2^\lambda}\right)
\\&\leq
E_0\bigl(\nu,u,\lambda,H\bigr),
\end{aligned}
\] 
where
$\Ezero$ is defined in~\eqref{eqn_Ezero_def}.

The main term coming from the application of~\eqref{eqn_trivial_triple} to the sum~\eqref{eqn_S2_transformation} is
\[ 
\begin{aligned}
\hspace{6em}&\hspace{-6em}
\Sone(\nu,u,\lambda,H,\xi)\eqdef 
\sum_{\substack{\hO\in\mathbb Z\\0\le\lvert \hO\rvert<H}}
a_{\hO}\bigl(2^{-(\lambda-u)},H\bigr)\,
\sum_{0\leq j<2^{\lambda-u}}
\e\bigl(\tfrac12 \digitsum(j)-j\hO2^{-(\lambda-u)}\bigr)
\\&\times
\frac1{2^\nu}
\sum_{0\leq n<2^\nu}
\e\biggl(\frac12\digitsum^{\mathbb Z\setminus[u,\lambda)}\bigl(n^3\bigr)
+\frac{\hO n^3}{2^\lambda}+n\xi\biggr),
\end{aligned}
\] 
and we have the estimate
\begin{equation}\label{eqn_S2_estimate}
\Szero(\nu,\xi)
=
\Sone(\nu,u,\lambda,H,\xi)
+\LandauO\bigl(E_0(\nu,u,\lambda,H)\bigr),
\end{equation}
with an absolute implied constant.

We consider the main term $\Sone(\nu,u,\lambda,H,\xi)$.
We apply Cauchy--Schwarz on the sum over $\hO$,
and use the estimate
\[\bigl\lvert a_h(2^{-(\lambda-u),H})\bigr\rvert\leq \frac1{2^{\lambda-u}}:\]
\[ 
\begin{aligned}
\bigl\lvert \Sone(\nu,u,\lambda,H,\xi)\bigr\rvert^2
&\leq
\frac{\Stwo(\nu,u,\lambda,H,\xi)}
{2^{2(\lambda-u)}}
\sum_{\substack{\hO\in\mathbb Z\\\lvert\hO\rvert<H}}
\Biggl\lvert
\sum_{0\leq j<2^{\lambda-u}}
\e\bigl(
\tfrac12 \digitsum(j)-\hO j2^{-(\lambda-u)}
\bigr)
\Biggr\rvert
^2,
\end{aligned}
\] 
where
\[ 
\Stwo(\nu,u,\lambda,H,\xi)
\eqdef
\sum_{\substack{\hO\in\mathbb Z\\\lvert\hO\rvert<H}}
\Biggl\lvert
\frac1{2^\nu}
\sum_{0\leq n<2^\nu}
\e\biggl(\tfrac12\digitsum^{\mathbb Z\setminus[u,\lambda)}\bigl(n^3\bigr)
+\frac{\hO n^3}{2^\lambda}+n\xi\biggr)
\Biggr\rvert^2.
\] 
Adding the summand $h=0$ and applying Lemma~\ref{lem_full}, we obtain
\begin{equation*} 
\begin{aligned}
\hspace{4em}&\hspace{-4em}
\sum_{\substack{\hO\in\mathbb Z\\\lvert\hO\rvert<H}}
\Biggl\lvert
\sum_{0\leq j<2^{\lambda-u}}
\e\bigl(
\tfrac12 \digitsum(j)-\hO j2^{-(\lambda-u)}
\bigr)
\Biggr\rvert
^2
\\&\leq
\frac{2H}{2^{\lambda-u}}
\sum_{0\leq \hO<2^{\lambda-u}}
\Biggl\lvert
\sum_{0\leq j<2^{\lambda-u}}
\e\bigl(
\tfrac12 \digitsum(j)-\hO j2^{-(\lambda-u)}
\bigr)
\Biggr\rvert
^2
=
2H2^{\lambda-u}.
\end{aligned}
\end{equation*}
It follows that
\begin{equation}\label{eqn_S2_S3}
\begin{aligned}
\bigl\lvert \Sone(\nu,u,\lambda,H,\xi)\bigr\rvert^2
&\leq
\frac{2H}{2^{\lambda-u}}
\Stwo(\nu,u,\lambda,H,\xi).
\end{aligned}
\end{equation}

Note that we have the trivial bound
$E_0(u,\nu,\lambda,H)\ll1$, and also $\Sone\ll1$ by~\eqref{eqn_S2_estimate}.
From~\eqref{eqn_S2_estimate} and~\eqref{eqn_S2_S3} it follows therefore that
\begin{equation}\label{eqn_S1_S3}
\bigl\lvert \Szero(\nu,\xi)\bigr\rvert^2
\ll
\frac H{2^{\kappaO}}
\Stwo(\nu,u,\lambda,H,\xi)
+E_0(\nu,u,\lambda,H).
\end{equation}

The error term $E_0$ will be estimated by Lemma~\ref{lem_Ezero_estimate} near the end of the proof of Theorem~\ref{thm_main}.
Being the error arising from trigonometric approximation of an interval of length $2^{-(\lambda-u)}$, via Corollary~\ref{cor_interval_detection_shift}, we will need $2^{\lambda-u}=o(H)$ in order to get a nontrivial estimate for it.
The additional nontrivial factor $H/2^{\lambda-u}$ will be accounted for,
as we will show $\Stwo=o(A^{-1})$ (see Section~\ref{sec_finishing}).

\subsection{Introducing correlations}
Let us introduce an (integer) split point $\tau\in[\lambda/3,\nu]$.
We decompose the summation variable $n$ (having binary length $\le\nu$) at the index $\tau$: set
\begin{equation}\label{eqn_first_split}
\begin{aligned}
n&=2^\tau \nL+\nO, \quad\mbox{where}
\quad
\left\{
\begin{array}{l}
\nL\in \JL\eqdef\bigl\{0,\ldots,2^{\nu-\tau}-1\bigr\},\\[1mm]
\nO\in \JO\eqdef\bigl\{0,\ldots,2^\tau-1\bigr\}.
\end{array}
\right.
\end{aligned}
\end{equation}
Using this decomposition, we obtain
\[ 
\begin{aligned}
\Stwo(\nu,u,\lambda,H,\xi)
&=
\sum_{\substack{\hO\in\mathbb Z\\\lvert\hO\rvert<H}}
\Biggl\lvert
\frac1{2^\tau}
\sum_{\nO\in\JO}
\e\bigl(\nO\xi\bigr)
\frac1{2^{\nu-\tau}}
\sum_{\nL\in\JL}
\e\Biggl(\tfrac12\digitsum^{\mathbb N\setminus[u,\lambda)}\Bigl(\bigl(\nL2^\tau+\nO\bigr)^3\Bigr)
\\&\hspace{3em}+\hO\frac{\nL^32^{3\tau}+3\nL^22^{2\tau}\nO+3\nL2^\tau\nO^2+\nO^3}{2^\lambda}+\nL2^\tau\xi\Biggr)
\Biggr\rvert^2\\
&\leq
\sum_{\substack{\hO\in\mathbb Z\\\lvert\hO\rvert<H}}
\frac1{2^\tau}
\sum_{\nO\in\JO}
\Biggl\lvert
\frac1{2^{\nu-\tau}}
\sum_{\nL\in\JL}
\e\Biggl(\tfrac12\digitsum^{\mathbb N\setminus[u,\lambda)}\Bigl(\bigl(\nL2^\tau+\nO\bigr)^3\Bigr)
\\&
\hspace{3em}
+\hO\frac{3\nL^22^{2\tau}\nO+3\nL2^\tau\nO^2}{2^\lambda}+\nL 2^\tau\xi\Biggr)
\Biggr\rvert^2
\end{aligned}
\] 
for all integers $\nu,\tau,u,\lambda,H$ satisfying
$0\leq \lambda/3\leq\tau\leq \nu$, $0\leq u\leq \lambda$, $H\geq 1$, $2^{\lambda-u}\mid H$, and all $\xi\in\mathbb R$.

\subsubsection{The first application of van der Corput's inequality}
The next important step consists in an application of Lemma~\ref{lem_vdC_generalized} (the generalized inequality of van der Corput) on the sum over $\nL$.
We consider a function $\multiple:\JOL\rightarrow\{0,\ldots,B\}$
to be defined later, where $B$ is also chosen later.
For brevity, we define
\[\tmultiple:\JO\rightarrow\{0,\ldots,B\},\quad\nO\mapsto\multiple\bigl(\bigl\lfloor \nO/2^\zeta\bigr\rfloor\bigr).\]
Clearly, this function depends only on the digits of $\nO$ having indices in $[\zeta,\tau)$. Later, the role of $\lfloor \nO/2^\zeta\rfloor$ will be taken by $\nOL$.

We choose $\mathcal S=\{sm2^\tau:0\leq s<S\}$ in Lemma~\ref{lem_vdC_generalized}, which implies
\begin{equation}\label{eqn_S4_estimate}
\Stwo(\nu,u,\lambda,H,\xi)
\leq
\frac1{S^2}
\sum_{\substack{0\leq \sO<S\\0\leq \sL<S}}
\frac1{2^\tau}
\sum_{\nO\in \JO}
\bigl\lvert
\Sthree(\nu,u,\lambda,H,\tmultiple(\nO),\sO,\sL,\nO)
\bigr\rvert,
\end{equation}
where
\[ 
\begin{aligned}
\hspace{1em}&\hspace{-1em}
\Sthree(\nu,u,\lambda,H,m,\sO,\sL,\nO)
\\&\eqdef
\frac1{2^{\nu-\tau}}
\sum_{\substack{
\nL+\sO m\in\JL\\\nL+\sL m\in\JL}}
\e\Bigl(
 \tfrac12\smallspace\digitsum^{\mathbb N\setminus[u,\lambda)}\bigl(\bigl(\nL2^\tau+\nO+\sO\,m2^\tau\bigr)^3\bigr)
-\tfrac12\smallspace\digitsum^{\mathbb N\setminus[u,\lambda)}\bigl(\bigl(\nL2^\tau+\nO+\sL\,m2^\tau\bigr)^3\bigr)
\Bigr)
\\&\times
\sum_{0\leq \hO<H}
\e\Biggl(
\hO\frac{
3(\nL+\sO\,m)^22^{2\tau}\nO+3(\nL+\sO\,m) 2^\tau\nO^2
}{2^\lambda}\biggr)
\\&\hspace{2em}
-\hO\frac{
3(\nL+\sL m)^22^{2\tau}\nO+3(\nL+\sL m) 2^\tau\nO^2
}{2^\lambda}
\Biggr)\\
&=
\frac1{2^{\nu-\tau}}
\sum_{\substack{\nL+\sO\,m\in\JL\\\nL+\sL m\in\JL}}
\e\Bigl(
 \tfrac12\smallspace\digitsum^{\mathbb N\setminus[u,\lambda)}\bigl(\bigl(\nL2^\tau+\nO+\sO\,m2^\tau\bigr)^3\bigr)
-\tfrac12\smallspace\digitsum^{\mathbb N\setminus[u,\lambda)}\bigl(\bigl(\nL2^\tau+\nO+\sL m2^\tau\bigr)^3\bigr)
\Bigr)
\\&\times
\sum_{0\leq \hO<H}
\e\Biggl(
\hO\frac{
6\nL\nO(\sO-\sL)m2^{2\tau}
}{2^\lambda}\Biggr)
\e\Biggl(
\hO
\frac{
3\nO^2(\sO-\sL)m2^\tau+3\nO(\sO^2-\sL^2)m^22^{2\tau}
}{2^\lambda}\Biggr).
\end{aligned}
\] 
Note that the van der Corput inequality reduced the degree of the polynomial in $\nL$ in the exponential by $1$.
A linear term remains, a fact that will prove to be of great importance in our proof.
We rewrite the unwieldy expressions above, using the abbreviations
\begin{equation}\label{eqn_xf_def}
\begin{aligned}
x(\lambda,\tau,m,\sO,\sL,\nO)
&\eqdef
\frac{6\smallspace\nO(\sO-\sL)m}{2^{\lambda-2\tau}},\\
f(\lambda,\tau,m,\sO,\sL,\nO)
&\eqdef
\frac{3\smallspace \nO^2(\sO-\sL)m
+3\smallspace \nO\bigl(\sO^2-\sL^2\bigr)m^22^{\tau}
}{2^{\lambda-\tau}}.
\end{aligned}
\end{equation}
Inserting $x$ and $f$,
we obtain
\begin{equation}\label{eqn_S3_S5}
\begin{aligned}
\hspace{15em}&\hspace{-15em}
\Stwo(\nu,u,\lambda,H,\xi)
\leq
\frac1{S^2}
\sum_{\substack{0\leq \sO<S\\0\leq \sL<S}}
\frac1{2^\tau}
\sum_{\nO\in \JO}
\Biggl\lvert
\sum_{0\leq \hO<H}
\e\bigl(\hO f(\lambda,\tau,\tmultiple(\nO),\sO,\sL,\nO)\bigr)
\\[-1em]&\times
\Sfour(\nu,\tau,u,\lambda,\tmultiple(\nO),\sO,\sL,\hO,\nO)
\Biggr\rvert
\end{aligned}
\end{equation}
where
\[ 
\begin{aligned}
\hspace{2em}&\hspace{-2em}
\Sfour(\nu,\tau,u,\lambda,m,\sO,\sL,\hO,\nO)
\\&\eqdef
\frac1{2^{\nu-\tau}}
\sum_{\substack{\nL+\sO m\in\JL\\\nL+\sL m\in\JL}}
\e\Biggl(
\frac12\sum_{\ts\in\{\sO,\sL\}}
\smallspace\digitsum^{\Iold}\Bigl(\bigl(\nL 2^\tau+\nO+\ts\smallspace m2^\tau\bigr)^3\Bigr)
\\[-2em]&\hspace{7cm}
+\nL\smallspace\hO\smallspace x(\lambda,\tau,m,\sO,\sL,\nO)
\Biggr).
\end{aligned}
\] 
Note that this expression is already uniform in $\xi\in\mathbb R$.
We have to handle the conditions $\nL+\sO m, \nL+\sL m\in \JL$.
Note that the quantities $\sO m$ and $\sL m$ are bounded by $SB$.
Replacing the two conditions by $\nL\in \JL$ therefore introduces an error
$SB/2^{\nu-\tau}$. This error is multiplied by $H$ in order to account for the sum over $\hO$, which is not preceded by a factor $1/H$. This yields $\Eone$.

Let us split up the summation variable $\nL$ into two parts,
using another split point $\rho\in[\tau,\nu]$ to be chosen later.
We set
\begin{equation}\label{eqn_nL_split}
\begin{aligned}
\nL&=2^{\rho-\tau}\nLL+\nLO, \quad\mbox{where}
\quad
\left\{
\begin{array}{l}
\nLL\in \JLL\eqdef\bigl\{0,\ldots,2^{\nu-\rho}-1\bigr\},\\[1mm]
\nLO\in \JLO\eqdef\bigl\{0,\ldots,2^{\rho-\tau}-1\bigr\}.
\end{array}
\right.
\end{aligned}
\end{equation}
Note that we have
\[ 
\frac{\lambda}3\leq\tau\leq\rho\leq\nu.
\] 
The quantity $\nL\smallspace\hO\smallspace x(\nO)$ introduces another ``twist'', in the variable $\nL$.
In order to remove it,
we will later choose the values $\tmultiple(\nO)$ suitably.
In this process, the term
\[\lVert 2^{\rho-\tau}\hO\smallspace x(\hO,\nO)\rVert,\]
which appears after replacing the sum over $\nL$ by a sum over $(\nLL,\nLO)$,
becomes very small, and only slightly perturbs the exponential sum $\Sfour$.
In this manner, applying summation by parts, we remove $x$ and uncouple $\Sfour$ from the sum over $\hO$. This first \emph{uncoupling} procedure
allows us to obtain an unrestricted sum of $\e(\hO f(\sO,\sL,\nO))$ over $\hO$,
in the expression~\eqref{eqn_S3_S5}, giving rise to a geometric sum $\varphi_H$.

In Section~\ref{sec_uncoupling}, see Proposition~\ref{prp_glycerol}, we will later continue this uncoupling process, using basically the extremal case $(p,q)=(1,\infty)$ of H\"older's inequality.
This will yield a sum in the variable $\nLO$, over the function $\varphi_H$, not obstructed by any sum-of-digits expression.

Using the decomposition~\eqref{eqn_nL_split}, we obtain
\begin{equation}\label{eqn_S5_S6}
\Sfour
=
\frac1{2^{\rho-\tau}}
\sum_{\nLO\in \JLO}
\e\bigl(\nLO\smallspace\hO\smallspace x(\hO,\nO)\bigr)
\Sfive(\nu,\tau,u,\lambda,m\sO,\sL,\hO,\nO,\nLO)
+\LandauO(\Eone),
\end{equation}
where
\[ 
\begin{aligned}
\hspace{3em}&\hspace{-3em}
\Sfive(\nu,\tau,u,\lambda,m,\sO,\sL,\hO,\nO,\nLO)
\\&\eqdef
\frac1{2^{\nu-\rho}}
\sum_{\nLL\in \JLLm}
\e\Biggl(
\frac12\sum_{\ts\in\{\sO,\sL\}}
\smallspace\digitsum^{\Iold}\bigl(\bigl(2^\rho\nLL+2^\tau \nLO+\nO+\ts\smallspace m2^\tau)^3\bigr)
\Biggr)
\\&\hspace{6cm}\times
\e\bigl(\nLL2^{\rho-\tau}\hO\smallspace x(\lambda,\tau,m,\sO,\sL,\nO)\bigr).
\end{aligned}
\] 

We use summation by parts on the sum over $\nLL$, more precisely,
the version given in Lemma~\ref{lem_summation_by_parts}.
Rewriting $\Sfive$ according to this lemma, we obtain
\begin{equation}\label{eqn_S6_split}
\Sfive(\nu,\tau,u,\lambda,m,\sO,\sL,\hO,\nLO,\nO)
=\Sseven+S_{\mathrm{E}},
\end{equation}
where
\begin{align*}
\Sseven(\sO,\sL,m,\nLO,\nO)&\eqdef
\frac1{2^{\nu-\rho}}
\sum_{\nLL\in\JLLm}
\prod_{\ts\in\{\sO,\sL\}}
\e\Bigl(
\tfrac12\smallspace\digitsum^{\Iold}\Bigl(\bigl(\nLL 2^\rho+\nLO2^\tau+\nO+
\ts\smallspace m2^\tau\bigr)^3\Bigr)
\Bigr),\\
S_{\mathrm{E}}(\sO,\sL,m,\hO,\nLO,\nO)&\eqdef
\bigl(1-\e\bigl(-2^{\rho-\tau}\hO\smallspace x\bigr)\bigr)
\sum_{\substack{\mLL\in \JLLm\\\mLL\ge1}}
\e(\mLL 2^{\rho-\tau}\hO x)
\Ssix^{(\mLL)}(\sO,\sL,m,\nLO,\nO),
\end{align*}
and
\[\Ssix^{(\mLL)}(\sO,\sL,m,\nLO,\nO)\eqdef
\frac1{2^{\nu-\rho}}
\sum_{\substack{\nLL\in \JLLm\\\nLL\ge\mLL}}
\prod_{\ts\in\{\sO,\sL\}}
\e\Bigl(
\tfrac12\smallspace\digitsum^{\Iold}\Bigl(\bigl(\nLL 2^\rho+\nLO2^\tau+\nO+
\ts\smallspace m2^\tau\bigr)^3\Bigr)
\Bigr).
\]

The elementary inequality
$\lVert na\rVert\leq n\lVert a\rVert$ implies
\begin{equation}\label{eqn_x_close}
\bigl\lVert 2^{\rho-\tau}\hO\smallspace x(m)\bigr\rVert
=\left\lVert
2^{\rho-\tau}6(\sO-\sL)\hO\frac{m\nO}{2^{\lambda-2\tau}}
\right\rVert
\leq 6SH\biggl\lVert \frac{mn_0}{2^{\lambda-\tau-\rho}}\biggr\rVert,
\end{equation}
which yields the bound 
\begin{equation}\label{eqn_SE_estimate}
\frac H{2^\tau}\sum_{0\leq\nO<2^\tau}
\bigl\lVert 2^{\rho-\tau}\hO\smallspace x(\tmultiple(\nO))\bigr\rVert
\ll \Etwo,
\end{equation}
where $\Etwo$ is defined in~\eqref{eqn_E2_def}.
This average over $\nO\in\{0,\ldots,2^\tau-1\}$ appears in~\eqref{eqn_S3_S5}.
Having disposed of the twisting term
$\e\bigl(\nLL 2^{\rho-\tau}\hO\smallspace x\bigr)$, which is present in the definition of $\Sfive$,
we thus have removed the coupling of the variables $\hO$ and $\nLL$.
Consulting~\eqref{eqn_S3_S5},~\eqref{eqn_S5_S6},~\eqref{eqn_S6_split}, and~\eqref{eqn_SE_estimate}, we obtain
\begin{equation}\label{eqn_S3_S8}
\begin{aligned}
\hspace{4em}&\hspace{-4em}
\Stwo(\nu,\xi,\lambda,\mu,H)
\leq
\frac1{S^2}
\sum_{\substack{0\leq \sO<S\\0\leq \sL<S}}
\frac1{2^\tau}
\sum_{\nO\in\JO}
\Biggl \lvert
\sum_{0\leq \hO<H}
\e\bigl(\hO f(\tmultiple(\nO),\sO,\sL,\nO)\bigr)
\\&
\times\frac1{2^{\rho-\tau}}
\sum_{\nLO\in\JLO}
\e(\nLO\smallspace \hO\smallspace x(\hO,\nO))
\bigl(\Sseven(\sO,\sL,\tmultiple(\nO),\nLO,\nO)+S_{\mathrm E}\bigr)
\Biggr \rvert
+\LandauO(\Eone)
\\
&\leq
\frac1{S^2}
\sum_{\substack{0\leq \sO<S\\0\leq \sL<S}}
\frac1{2^\tau}
\sum_{\nO\in\JO}
\frac1{2^{\rho-\tau}}
\sum_{\nLO\in\JLO}
\Biggl \lvert
\Sseven(\sO,\sL,\tmultiple(\nO),\nLO,\nO)
\\&
\times
\sum_{0\leq \hO<H}
\e\bigl(\hO\bigl(f(\sO,\sL,\tmultiple(\nO),\nO)+\nLO\smallspace x(\tmultiple(\nO)),\sO,\sL,\nO)\bigr)\bigr)
\Biggr \rvert
\\&+\LandauO(\Eone+\Etwo)
\\&\leq
\frac1{S^2}
\sum_{\substack{0\leq \sO<S\\0\leq \sL<S}}
\frac1{2^\rho}
\sum_{(\nLO,\nO)\in\JLO\times\JO}
\bigl \lvert
\Sseven(\sO,\sL,\tmultiple(\nO),\nLO,\nO)
\bigr \rvert
\\[-.5em]&
\hspace{11em}
\times
\bigl \lvert\varphi_H\bigl(f(\tmultiple(\nO),\sO,\sL,\nO)+\nLO\smallspace x(\tmultiple(\nO),\sO,\sL,\nO)\bigr)
\bigr \rvert
\\[0.5em]&
+\LandauO(\Eone+\Etwo),
\end{aligned}
\end{equation}
where $\varphi_H$ is defined by \eqref{eqn_varphi_def}.
We see that Dirichlet approximation, present in the choice of $\multiple(\nOL)$ (see Section~\ref{sec_finishing}) enables us to isolate a geometric sum $\varphi_H$.

Up to now, the digits with indices in $[\lambda,\infty)$ are still present.
In order to remove them, we are going to split up $\nO$ at the index $\zeta$ (to be chosen later:
\begin{equation}\label{eqn_nO_decomposition}
\begin{aligned}
\nO&=\nOL2^\zeta+\nOO, \quad\mbox{where}\quad
\left\{
\begin{array}{l}
\nOL\in \JOL\eqdef\bigl\{0,\ldots,2^{\tau-\zeta}-1\bigr\},\\[1mm]
\nOO\in \JOO\eqdef\bigl\{0,\ldots,2^\zeta-1\bigr\}.
\end{array}
\right.
\end{aligned}
\end{equation}
In the following we will not use $\tmultiple$ any longer, as $\multiple(\nOL)=\tmultiple(\nO)$.
We first note that the argument of $\varphi_H$ depends only in a weak way on the lowest digits of $\nO$.
Defining
\begin{equation}\label{eqn_Kprime_def}
K'(m,\sO,\sL,\nLO,\nO)\eqdef f(m,\sO,\sL,\nO)+\nLO\smallspace x(m,\sO,\sL,\nO),
\end{equation}
we see from the definition~\eqref{eqn_xf_def} that
\[ 
\begin{aligned}
\hspace{15em}&\hspace{-15em}
K'\bigl(m,\sO,\sL,\nLO,\nOL2^\zeta+a\bigr)-K'\bigl(m,\sO,\sL,\nLO,\nOL2^\zeta+b\bigr)
\\&\ll
2^\zeta\bigl(SB2^{\rho+\tau-\lambda}+S^2B^22^{2\tau-\lambda}\bigr)
\end{aligned}
\] 
for $a,b\in\JOO$, $\lvert\sO\rvert,\lvert\sL\rvert<S$, and $(\nLO,\nOL)\in\JLO\times\JOL$,
with some absolute implied constant.
Inserting the definition~\eqref{eqn_E3_def} of $\Ethree$,
this implies
\begin{equation}\label{eqn_varphiH_estimate}
\varphi_H\bigl(K'(m,\sO,\sL,\nLO,\nOL2^\zeta+a)\bigr)-
\varphi_H\bigl(K'(m,\sO,\sL,\nLO,\nOL2^\zeta+b)\bigr)
\ll \Ethree.
\end{equation}
Replacing $K'$ by $K$ (defined in~\eqref{eqn_K_def}),
and using~\eqref{eqn_nO_decomposition} and~\eqref{eqn_varphiH_estimate},
we obtain
\begin{equation}\label{eqn_J00_abkapselung}
\begin{aligned}
\hspace{2em}&\hspace{-2em}
\frac1{2^\rho}
\sum_{(\nLO,\nOL,\nOO)\in \JLO\times\JOL\times\JOO}
\bigl\lvert \Sseven(\sO,\sL,\nLO,\nOL2^\zeta+\nOO)\bigr\rvert\bigl\lvert\varphi_H\bigl(K(\sO,\sL,\nLO,\nOL)\bigr)\bigr\rvert
\\&\ll
\frac1{2^{\rho-\zeta}}
\sum_{(\nLO,\nOL)\in\JLO\times\JOL}
\bigl\lvert\varphi_H\bigl(K(\sO,\sL,\nLO,\nOL)\bigr)\bigr\rvert
\\&\hspace{3cm}\times
\frac1{2^\zeta}
\sum_{\nOO\in\JOO}
\bigl\lvert \Sseven\bigl(\sO,\sL,\nLO,\nOL2^\zeta+\nOO\bigr)\bigr\rvert
+
\LandauO\bigl(\Ethree\bigr).
\end{aligned}
\end{equation}
\subsubsection{The second application of van der Corput's inequality}
As the expression $\Sseven$ is now summed freely in the variable $\nOO$, we may apply van der Corput's inequality.
There exist complex numbers
\[\varepsilon_{\nOO}=\varepsilon_{\nOO}(\nLO,\nOL),
\quad
\lvert\varepsilon_{\nOO}\rvert=1,
\]
 such that
\[\bigl\lvert
\Sseven(\sO,\sL,\nLO,\nOL2^\zeta+\nOO)
\bigr\rvert
=\varepsilon_{\nOO}
\Sseven(\sO,\sL,\nLO,\nOL2^\zeta+\nOO).\]
Clearly, $\varepsilon_{\nOO}$ does not depend on the index $\nLL$.
In particular, we obtain for all $(\nLO,\nOL)\in\JLO\times\JOL$
\begin{equation}\label{eqn_S8_S9}
\begin{aligned}
\hspace{2em}&\hspace{-2em}
\frac1{2^\zeta}
\sum_{\nOO\in \JOO}
\bigl\lvert
\Sseven\bigl(\sO,\sL,m,\nLO,\nOL2^\zeta+\nOO\bigr)
\bigr\rvert
=\frac1{2^{\nu-\rho}}
\sum_{\nLL\in\JLLm}
\frac1{2^\zeta}
\sum_{\nOO\in \JOO}
\varepsilon_{\nOO}
\\&\times
\prod_{\ts\in\{\sO,\sL\}}
\e\Bigl(
\tfrac12\smallspace\digitsum^{\Iold}\Bigl(\bigl(\nLL 2^\rho+\nLO2^\tau
+\nOL2^\zeta+\ts\smallspace m2^\tau+\nOO
\bigr)^3\Bigr)
\Bigr)
\\&\leq
\Biggl(
\frac1{2^{\nu-\rho}}
\sum_{\nLL\in\JLLm}
\Biggl\lvert
\frac1{2^\zeta}
\sum_{\nOO\in \JOO}
\varepsilon_{\nOO}
\\&\times
\prod_{\ts\in\{\sO,\sL\}}
\e\Bigl(
\tfrac12\smallspace\digitsum^{\Iold}\Bigl(\bigl(\nLL2^\rho+\nLO2^\tau
+\nOL2^\zeta
+\ts\smallspace m2^\tau
+\nOO
\bigr)^3\Bigr)
\Bigr)
\Biggr\rvert^2
\Biggr)^{1/2}
\\&\leq
\Biggl(
\frac1{2^{\nu-\rho}}
\sum_{\nLL\in\JLLm}
\frac{1+(R-1)/2^\zeta}{R}
\sum_{\lvert r\rvert<R}
\left(1-\frac{\lvert r\vert}R\right)
\frac1{2^\zeta}
\sum_{\nOO\in \JOO\cap(\JOO-r)}
\varepsilon_{\nOO+r}
\overline{\varepsilon_{\nOO}}
\\&
\times
\prod_{\substack{\varepsilon\in\{0,1\}\\\ts\in\{\sO,\sL\} }}
\e\Bigl(
\tfrac12\smallspace\digitsum^{\Iold}\Bigl(\bigl(\nLL2^\rho+\nLO2^\tau
+\nOL2^\zeta+\ts\smallspace m2^\tau+\nOO+\varepsilon r
\bigr)^3\Bigr)
\Bigr)
\Biggr)^{1/2}.
\end{aligned}
\end{equation}

Only at this point, we apply the \emph{carry lemma}, Lemma~\ref{lem_carry},
in order to
\[\mbox{replace\quad$\Iold=[0,u)\cup[\lambda,\infty)$\quad by\quad$\Inew=[0,u)$.}\]
This elimination of the upper digits, with indices in $[\lambda,\infty)$, was the purpose of having split up the summation over $\nO$ (at $\zeta$).
The condition
\begin{equation}\label{eqn_additional2}
SB<2^{\rho-\tau}
\end{equation}
implies that
\[\bigl\lvert
\nLL2^\rho+\nLO2^\tau+\nOL2^\zeta+\ts\smallspace\multiple(\nOL)2^\tau+\nOO\bigr\rvert
\leq 2^\nu+2^\rho\leq 2^{2\nu}.
\]
Let us exclude the summand $\nLL=0$. 
Lemma~\ref{lem_carry}, applied to the case
\[
\begin{aligned}
A&\eqdef
\nLL2^\rho+\nLO2^\tau+\nOL2^\zeta+\ts\smallspace\multiple(\nOL)2^\tau,\quad
B\eqdef A+2^\zeta-r,
\end{aligned}
\]
and $M=\mathbb N\setminus[u,\lambda)$,
allows us to replace $\mathbb N\setminus[u,\lambda)$ in~\eqref{eqn_S8_S9} by
$[0,u)$.
Note that $A\geq2^\rho$ by our assumption that $\nLL\neq0$, therefore an error
\begin{equation*}
\frac1{2^\zeta}
r\frac{(2^\rho+2^\zeta)^2}{(2^\rho)^2}\bigl(2^{2\nu-\lambda+\zeta}+1\bigr)
\ll
\frac{R\smallspace2^{2\nu}}{2^\lambda}+\frac R{2^\zeta}
\end{equation*}
arises from applying the carry lemma.
Removing also the term $r=0$, an error $1/R$ is introduced, leading to $\Efourm$.

We obtain \begin{equation}\label{eqn_S8_S9_2}
\begin{aligned}
\hspace{3em}&\hspace{-3em}
\Biggl(
\frac1{2^\zeta}
\sum_{\nOO\in \JOO}
\bigl\lvert
\Sseven\bigl(\sO,\sL,m,\nLO,\nOL2^\zeta+\nOO\bigr)
\bigr\rvert
\Biggr)^2
\\&\ll
\frac1{R}
\sum_{1\leq\lvert r\rvert<R}
\frac1{2^\zeta}
\sum_{\nOO\in \JOO}
\bigl\lvert
\Seight\bigl(u,\nu,\rho,\tau,\zeta,\sO,\sL,m,\nLO,\nOL,\nOO,r\bigr)
\bigr\rvert
+\Efourm,
\end{aligned}
\end{equation}
where
$\Seight$ and $\Efourm$ are defined in~\eqref{eqn_S9_def} and~\eqref{eqn_E4m_def} respectively. 
\subsubsection{Finishing the proof or Proposition~\ref{prp_linearize}}\label{sec_prp_linearize_finishing}
It remains to prove that the contribution of $\Efourm$ is still not too large when the sum over $\varphi_H$ in~\eqref{eqn_J00_abkapselung} is taken into account.
After all, the product $H\Efourm$ yields the trivial estimate for $\Szero$ and therefore cannot be used. Instead, we use the fact that $\Efourm$ does not depend on the outer summations over $\nLO$ and $\nOL$ in~\eqref{eqn_J00_abkapselung}, and we can take an average over $\varphi_H$ in these variables.
In Section~\ref{sec_losing_only_a_logarithm}, see~\eqref{eqn_SX_estimate}, we will in fact prove such an estimate, where we also have to take the error term~$\Esix$ (see~\eqref{eqn_Esix_def}) into account.
Summarizing, we only lose a logarithmic factor, yielding $\Efour$.

Collecting the error terms and combining
~\eqref{eqn_S1_S3},
~\eqref{eqn_S3_S8},
~\eqref{eqn_varphiH_estimate},
~\eqref{eqn_J00_abkapselung}, and
~\eqref{eqn_S8_S9_2},
the proof of Proposition~\ref{prp_linearize} is complete.
\qed

\subsection{Restricting some parameters in Proposition~\ref{prp_linearize}}
Let us now assume that $\lambda,u,\rho,\tau,\zeta$
are chosen in such a way that
\begin{equation}\label{eqn_parameters_stack}
\zeta\leq\frac{\lambda}3\leq\tau\leq\frac u2\leq\rho\leq\nu\leq\frac{\lambda}2.
\end{equation}
In our sum $\Seight$ (see~\eqref{eqn_S9_def}), the term $\nLL$ is accompanied by $2^\rho$.
As a consequence, we obtain the very important fact that terms involving a factor $\nLL^2$ (or $\nLL^3$) do not play a role any longer.
The elimination of the digits with indices in $[u,\lambda)$
was performed precisely for this purpose!

Thus, since $\nLL$ appears only linearly, the problem is essentially reduced to a linear one.

We introduce the abbreviations
\begin{equation}\label{eqn_AalphaB_def}
\begin{aligned}
A(\varepsilon,\ts)&\eqdef
  \nLO2^\tau+\nOL2^\zeta+\nOO+\varepsilon r+\ts\smallspace\multiple(\nOL)2^\tau,\\
\Qtilde(\varepsilon,\ts)&\eqdef 3A(\varepsilon,\ts)^2,\quad
\beta(\varepsilon,\ts)\eqdef\left\lfloor
\frac{
A(\varepsilon,\ts)^3}{2^\rho}\right\rfloor,\quad
c(\varepsilon,\ts)\eqdef \digitsum^{[0,\rho)}\left(A(\varepsilon,\ts)^3\right).
\end{aligned}
\end{equation}
Expanding the cube in the definition~\eqref{eqn_S9_def} of $\Seight$,
we obtain
\[ 
\begin{aligned}
\hspace{4em}&\hspace{-4em}
\Seight(u,\nu,\rho,\tau,\zeta,\nLO,\nOL,\nOO,\sO,\sL,m,r)
\\
&=
\frac1{2^{\nu-\rho}}
\sum_{\nLL\in \JLL} 
\prod_{\substack{\varepsilon\in\{0,1\}\\
\ts\in\{\sO,\sL\}
}}
\e\Bigl(\tfrac12\smallspace\digitsum^{I}\Bigl(
\nLL\Qtilde(\varepsilon,\ts)2^\rho+A(\varepsilon,\ts)^3
\Bigr)
\Bigr)
\\&=
\frac1{2^{\nu-\rho}}
\sum_{\nLL\in \JLL}
\prod_{\substack{\varepsilon\in\{0,1\}\\
\ts\in\{\sO,\sL\}
}}
\e\Bigl(\tfrac12\smallspace\digitsum^{[0,u-\rho)}(\nLL\Qtilde(\varepsilon,\ts)+\beta(\varepsilon,\ts))
+\tfrac12c(\varepsilon,\ts)
\Bigr).
\end{aligned}
\] 
The values of $A$, $\Qtilde$, and $c$ do not depend on $\nLL$.
Note that $\ts$ always appears with a factor $2^\tau$, and so each summand in the argument of $\digitsum^{[0,u-\rho)}$ that contains $\ts$ is divisible by $2^\tau$.
Therefore the contributions of the lowest $\tau$ digits arising from $\ts=\sO$ and $\ts=\sL$, respectively, are identical and cancel.
This justifies the replacement of $[0,u-\rho)$ by $[\tau,u-\rho)$:
\begin{equation}\label{eqn_S9_linear2}
\begin{aligned}
\hspace{5em}&\hspace{-5em}
\bigl\lvert
\Seight(u,\nu,\rho,\tau,\zeta,\nLO,\nOL,\nOO,\sO,\sL,m,r)
\bigr\rvert
\\&=
\frac 1{2^{\nu-\rho}}
\Biggl\lvert
\sum_{\nLL\in \JLL}
\prod_{\substack{\varepsilon\in\{0,1\}\\
\ts\in\{\sO,\sL\}} }
\e\Bigl(\tfrac12\smallspace\digitsum^{[\tau,u-\rho)}\bigl(\nLL\Qtilde(\varepsilon,\ts)+\beta(\varepsilon,\ts)\bigr)
\Bigr)
\Biggr\rvert.
\end{aligned}
\end{equation}

In~\eqref{eqn_S9_linear2}, we replace
$\Qtilde(\varepsilon,\ts)$ according to~\eqref{eqn_AalphaB_def}, expanding the square.
Note that
$\zeta$ will be chosen sufficiently small ($\zeta\leq\lambda/6$ is sufficient, which is guaranteed by our choices~\eqref{eqn_parameter_choices} later on),
so that
\begin{equation}\label{eqn_tau_large}
u-\rho\le2\tau-\zeta\le2\tau\le\tau+\rho\le2\rho.
\end{equation}
It follows that, integer multiples of
$2^{2\tau-\zeta}$, $2^{2\tau}$, $2^{\tau+\rho}$, and $2^{2\rho}$
do not contribute to $\digitsum^{[\tau,u-\rho)}$, and may be discarded.
Therefore
\[\Qtilde(\varepsilon,\ts)
\equiv
\Qprime(\varepsilon,\ts)
\bmod 2^{u-\rho},
\]
where
\begin{equation}\label{eqn_Qprime_def}
\begin{aligned}
\Qprime(\varepsilon,\ts)
\eqdef&\,
6\smallspace\nA(\varepsilon)2^\tau\nLO
+\,6\smallspace\ts\smallspace\multiple(\nOL)\smallspace\nA(\varepsilon)2^\tau
+\,3\smallspace\nA(\varepsilon)^2
\end{aligned}
\end{equation}
and
\[\nA(\varepsilon)\eqdef \nOL2^\zeta+\nOO+\varepsilon r.\]
As $\digitsum^{[\tau,u-\rho)}$ is periodic with period $2^{u-\rho}$, we may replace $\Qtilde$ by $\Qprime$:
\begin{equation}\label{eqn_S9_linear3}
\begin{aligned}
\hspace{5em}&\hspace{-5em}
\bigl\lvert
\Seight(u,\nu,\rho,\tau,\zeta,\nLO,\nOL,\nOO,\sO,\sL,m,r)
\bigr\rvert
\\&=
\frac 1{2^{\nu-\rho}}
\Biggl\lvert
\sum_{\nLL\in \JLL}
\prod_{\substack{\varepsilon\in\{0,1\}\\
\ts\in\{\sO,\sL\}} }
\e\Bigl(\tfrac12\smallspace\digitsum^{[\tau,u-\rho)}(\nLL\smallspace \Qprime(\varepsilon,\ts)+\beta(\varepsilon,\ts))
\Bigr)
\Biggr\rvert.
\end{aligned}
\end{equation}

For convenience, we summarize the above arguments in the following corollary.
Note that this is just Proposition~\ref{prp_linearize} under the additional hypothesis~\eqref{eqn_parameters_stack}, which transforms $\Seight$ into the simpler form~\eqref{eqn_S9_linear3}.
\begin{corollary}\label{cor_linearize}
Assume that the requirements~\eqref{eqn_S9_requirements},~\eqref{eqn_additional_requirements0},~\eqref{eqn_additional_requirements1}, and~\eqref{eqn_parameters_stack} are satisfied. Then

\begin{equation}\label{eqn_linearize}
\begin{aligned}
\bigl\lvert \Szero(\nu,u,\lambda,\xi)\bigr\rvert^4&\leq 
\sum_{\sO,\sL}
\frac1{2^{\rho-\zeta}}
\sum_{\substack{\nOL\in\JOL\\\nLO\in\JLO}}
\bigl\lvert \varphi_H\bigl(K(\lambda,\tau,\zeta,\nLO,\nOL,\sO,\sL,\multiple(\nOL)\bigr)\bigr\rvert
\\&\hspace{1.5cm}\times
\Biggl(
\sum_{\lvert r\rvert<R}\sum_{\nOO}
\,
\bigl\lvert \Seight(u,\nu,\rho,\tau,\zeta,\nLO,\nOL,\nOO,\sO,\sL,\multiple(\nOL),r)\bigr\rvert
\Biggr)^{1/2}
\\&+\LandauO\bigl(\Ezero+\Eone+\Etwo+\Ethree+\Efour\bigr),
\end{aligned}
\end{equation}
where $\Seight$ is given by~\eqref{eqn_S9_linear3}, and $\Ezero$--$\Efour$ are defined immediately before Proposition~\ref{prp_linearize}.
\end{corollary}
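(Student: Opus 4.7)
The plan is to deduce Corollary~\ref{cor_linearize} directly from Proposition~\ref{prp_linearize}. Both statements share the same main inequality and the same error budget $\Ezero,\ldots,\Efour$, so the only new content is the simplified expression~\eqref{eqn_S9_linear3} for $\Seight$, which must be extracted from the defining formula~\eqref{eqn_S9_def} under the additional hypothesis~\eqref{eqn_parameters_stack}. Once that algebraic reduction is in hand, substituting the simplified $\Seight$ into the bound of Proposition~\ref{prp_linearize} yields~\eqref{eqn_linearize} verbatim.

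First I would expand the cube $(\nLL 2^\rho + A(\varepsilon,\ts))^3$ in the exponent of $\Seight$, using the abbreviations of~\eqref{eqn_AalphaB_def}. The binomial expansion produces $\nLL^3 2^{3\rho} + 3\nLL^2 A 2^{2\rho} + 3\nLL A^2 2^\rho + A^3$; since $u \le 2\rho$ by~\eqref{eqn_parameters_stack}, the first two terms are multiples of $2^u$ and hence invisible to $\digitsum^{[0,u)}$. Setting $\Qtilde = 3A^2$ and writing $A^3 = \beta\cdot 2^\rho + (A^3 \bmod 2^\rho)$, splitting the resulting digit sum at position $\rho$ yields
\[\digitsum^{[0,u)}\bigl((\nLL 2^\rho + A)^3\bigr) = c(\varepsilon,\ts) + \digitsum^{[0,u-\rho)}\bigl(\nLL\smallspace\Qtilde(\varepsilon,\ts) + \beta(\varepsilon,\ts)\bigr).\]

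Next I would exploit the fact that $\ts$ enters the cube only through the summand $\ts\smallspace\multiple(\nOL)\smallspace 2^\tau$ of $A$: every $\ts$-containing term in the binomial expansion of the cube carries a factor of $2^\tau$, so the cube reduces modulo $2^\tau$ to its $\ts$-free part $A_0(\varepsilon)^3$. Therefore $\digitsum^{[0,\tau)}$ of the cube depends on neither $\ts$ nor $\nLL$, and the factor $\prod_{\ts\in\{\sO,\sL\}} \e\bigl(\tfrac12 \digitsum^{[0,\tau)}(\cdot)\bigr)$ reduces to $1$ by parity. The analogous middle-block quantity $\digitsum^{[\tau,\rho)}(A^3)$ depends on $\ts$ but is independent of $\nLL$, so upon taking the absolute value in $\Seight$ it factors out as a unit-modulus constant and may be dropped. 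These observations together justify replacing $\digitsum^{[0,u-\rho)}$ by $\digitsum^{[\tau,u-\rho)}$, producing~\eqref{eqn_S9_linear2}.

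Finally I would replace $\Qtilde = 3A^2$ by $\Qprime$ from~\eqref{eqn_Qprime_def}. Expanding $A(\varepsilon,\ts)^2$ using $A = \nLO 2^\tau + \nA(\varepsilon) + \ts\smallspace\multiple(\nOL)\smallspace 2^\tau$, the surplus terms $3\nLO^2 2^{2\tau}$, $6\nLO\smallspace\ts\smallspace\multiple\smallspace 2^{2\tau}$, and $3\ts^2\multiple^2 2^{2\tau}$ are each multiples of $2^{2\tau}$; the inequality $u-\rho \le 2\tau$ from~\eqref{eqn_tau_large}, implied by~\eqref{eqn_parameters_stack}, makes them vanish modulo $2^{u-\rho}$. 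Since $\digitsum^{[\tau,u-\rho)}$ depends on its argument only modulo $2^{u-\rho}$, the substitution is legal, delivering~\eqref{eqn_S9_linear3}. The principal technical obstacle is the digit cancellation in the second step: verifying that the $\ts$-dependence of $\digitsum^{[0,\tau)}(\nLL\Qtilde+\beta)$ vanishes inside the product requires carefully tracking how $\ts$ propagates through the non-polynomial quantity $\beta = \lfloor A^3/2^\rho\rfloor$, and this is precisely where the factor-of-$2^\tau$ structure of the cube is essential.
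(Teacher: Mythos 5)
Your proof follows the paper's route in all three steps: expand the cube and drop the $\nLL^3 2^{3\rho}$ and $3\nLL^2 2^{2\rho}A$ terms using $u\le 2\rho$; split $\digitsum^{[0,u)}$ at index $\rho$; shrink the digit window from below; and replace $\Qtilde$ by $\Qprime$ using $u-\rho\le 2\tau$. The first split and the final reduction $\Qtilde\rightsquigarrow\Qprime$ are both correct as you state them.

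The problem is the intermediate assertion "[t]hese observations together justify replacing $\digitsum^{[0,u-\rho)}$ by $\digitsum^{[\tau,u-\rho)}$." Your two observations — that $\digitsum^{[0,\tau)}$ of the cube cancels in the $\ts$-product, and that $\digitsum^{[\tau,\rho)}(A^3)$ is an $\nLL$-independent constant that the absolute value removes — together dispose only of $c(\varepsilon,\ts)=\digitsum^{[0,\rho)}(A^3)$, i.e.\ of the digits of the cube at indices $<\rho$. That lands you at $\digitsum^{[0,u-\rho)}\bigl(\nLL\Qtilde+\beta\bigr)$, \emph{not} at $\digitsum^{[\tau,u-\rho)}\bigl(\nLL\Qtilde+\beta\bigr)$. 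The block $\digitsum^{[0,\tau)}\bigl(\nLL\Qtilde+\beta\bigr)$ — the digits of the cube at indices in $[\rho,\rho+\tau)$ — is still in the exponent, and it does not factor out: it depends on $\nLL$ through $\nLL\Qtilde\equiv 3\nLL\nA(\varepsilon)^2\pmod{2^\tau}$, and it depends on $\ts$ through $\beta=\lfloor A^3/2^\rho\rfloor$, where the floor destroys the clean divisibility by $2^\tau$ that you rely on for $\Qtilde$. Note for instance the summand $3\tilde n_1^2 2^{2\tau-\rho}\nA$ in $\beta$ (with $\tilde n_1 = \nLO+\ts\multiple$): since $\rho>\tau$ we only have $2\tau-\rho<\tau$, so its $\ts$-containing parts are not multiples of $2^\tau$. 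You do flag this issue in your closing sentence as "the principal technical obstacle", which is the right diagnosis, but it is a gap that still needs to be filled, not merely named; the paper's parenthetical justification ("$\ts$ always appears with a factor $2^\tau$") before~\eqref{eqn_S9_linear2} also glosses over the floor, so the verification for $\beta$ must be spelled out to close the argument.
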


\section{Decoupling the geometric sum and the sum of digits}\label{sec_uncoupling}
We are going to decompose the sum over $\nLO\in\JLO$ into sums along short arithmetic progressions. The following elementary lemma will be of use.

\begin{lemma}\label{lem_decomposition}
Assume that $I\subseteq\mathbb Z$ is a finite interval,
and that $T,V\ge1$ are integers such that $TV\leq\lvert I\rvert$.
There exists a partition $\partition$ of $I$ such that
\begin{equation}\label{eqn_V_introduction}
\begin{aligned}
\hspace{3em}&\hspace{-3em}
\mbox{for all $P\in\partition$:}\\
&\mbox{(1) $P=\bigl(T\mathbb Z+a\bigr)\cap [x,y]$ for some $(a,x,y)\in\mathbb Z\times I\times I$,}\\
&\mbox{(2) $V/2\leq \lvert P\rvert\leq V$.}
\end{aligned}
\end{equation}

\end{lemma}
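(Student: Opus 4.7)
The plan is to first split $I$ into its $T$ residue classes modulo $T$, and then subdivide each of these arithmetic progressions into consecutive pieces of the right length. For $a\in\{0,\ldots,T-1\}$, set $R_a\eqdef I\cap(T\mathbb Z+a)$. Each $R_a$ is itself of the form $(T\mathbb Z+a)\cap[x,y]$ with $x,y\in I$ (take $x,y$ to be the smallest and largest elements of $R_a$), and a direct count gives $|R_a|\in\{\lfloor|I|/T\rfloor,\lceil|I|/T\rceil\}$. The hypothesis $TV\le|I|$ therefore yields $|R_a|\ge V$ for every $a$.

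Next, I would partition each $R_a$ into consecutive subsequences. Since any contiguous block of $R_a$ is again of the form $(T\mathbb Z+a)\cap[x',y']$, property (1) of~\eqref{eqn_V_introduction} is automatic, and only the size requirement (2) remains to be arranged. Write $|R_a|=qV+s$ with $0\le s<V$; because $|R_a|\ge V$, we have $q\ge1$. If $s=0$, take $q$ consecutive blocks of length exactly $V$. If $s>0$, take $q-1$ initial blocks of length $V$ followed by a terminal segment of length $V+s\in[V+1,2V-1]$, and split that segment into two pieces of lengths $\lceil(V+s)/2\rceil$ and $\lfloor(V+s)/2\rfloor$. A short verification shows that both lengths then lie in $[V/2,V]$, as required.

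The main obstacle is almost non-existent here; the only points that require a moment of care are the degenerate case $q=1$, $s>0$, in which the list of initial length-$V$ blocks is empty and one simply cuts $R_a$ itself in two halves, and the boundary case $s=0$, in which no splitting of the terminal segment is needed. In both sub-cases the claimed bounds $V/2\le|P|\le V$ continue to hold. Taking the union of the partitions produced for each residue class $a$ gives the desired partition $\partition$ of $I$, completing the argument.
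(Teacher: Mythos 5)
Your proof is correct and takes essentially the same approach as the paper: split $I$ into its residue classes $R_a$ modulo $T$ (each of size at least $\lfloor|I|/T\rfloor\ge V$), and then cut each $R_a$ into consecutive sub-progressions of length in $[V/2,V]$. The only cosmetic difference is in the final subdivision step, where the paper distinguishes the cases $V$ even and $V$ odd (in the odd case using blocks of size $(V+1)/2$), whereas you cover both parities uniformly by merging the last full block with the remainder $s$ and bisecting; your handling of the degenerate cases $s=0$ and $q=1$ is also correct.
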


\begin{proof}
For each $a\in\mathbb Z$ we have 
\[W\eqdef\bigl\lvert\bigl(a+T\mathbb Z\bigr)\cap I\bigr\rvert
\geq \bigl\lfloor \lvert I\rvert/T\bigr\rfloor\geq V.\]
Each of these sets can be decomposed into shorter arithmetic progressions $P$ satisfying $V/2\leq\lvert P\rvert\leq V$.
This is clear for $2\mid V$, otherwise we take $M$ shorter progressions of length $(V+1)/2$ so that $W-V\leq M(V+1)/2<W-(V-1)/2$. The remaining interval $P'$ has length $V\geq \lvert P'\rvert\geq (V-1)/2+1\geq V/2$.
\end{proof}

For each $\nOL\in\JOL$, we are going to choose a factor $T(\nOL)\in\mathbb N$ later, see Section~\ref{sec_Efivesix_estimate}, with the help of Lemma~\ref{lem_odd_elimination}.
The values $T(\nOL)$ are supposed to be odd, and eliminate digits of $\nOL$ directly below $\lambda-2\tau-\zeta$.
This is needed in order to guarantee that $K$ does not change much when $\nOL$ is varied, see~\eqref{eqn_K_def}.
More precisely, for given $\nOL\in\JOL$ we want to find an integer $T$ such that
\begin{equation}\label{eqn_varepsilon_small_enough}
T\in2\mathbb N+1,\quad
T<2^{5\etaO+7},
\quad\mbox{and}\quad
\biggl\lVert T\frac{\nOL}{2^{\lambda-2\tau-\zeta}}\biggr\rVert<2^{-\etaO},
\end{equation}
where the parameter $\etaO$ is chosen later.
For such an odd factor $T$ to exist for most $\nOL$, it is sufficient (by Lemma~\ref{lem_odd_elimination}) to have enough digits below $\lambda-2\tau-\zeta$ for an odd. In other words, we only have to require
\begin{equation}\label{eqn_eta_condition}
\lambda-2\tau-\zeta\ge 4\etaO+4.
\end{equation}
We will decompose $\JLO$ into arithmetic progressions with difference $T$, according to Lemma~\ref{lem_decomposition}, and the sensible choice of $T$ implies that $\varphi_H(K(m))$ is well behaved along these progressions.
The set of \emph{good} indices $\nOL\in\JOL$ --- such that a factor $T$ as in~\eqref{eqn_varepsilon_small_enough} exists --- will be denoted by $\GetaO(\etaO)$.
\begin{equation}\label{eqn_GetaO_def}
\GetaO(\etaO)\eqdef
\bigl\{
\nOL\in\JOL:\mbox{\eqref{eqn_varepsilon_small_enough} holds for some }T
\bigr\}.
\end{equation}

Let us assume that $T,V\ge1$ are integers such that
\begin{equation}\label{eqn_TV_restriction}
TV\leq \lvert\JLO\rvert,
\end{equation}
and that $\partition$ is a partition of $\JLO$ such that~\eqref{eqn_V_introduction} is satisfied for $I=\JLO$.

The quantity $\etaO$ will be chosen large enough so that $2^{-\etaO}$ eliminates factors of size $\LandauO(H^2SB)$:
consulting the definition of $K$ (equation~\eqref{eqn_K_def}), and~\eqref{eqn_linearize}, we see that the argument of $\varphi_H$
will vary by steps $\ll 2^{-\etaO}SB$ along arithmetic subsequences (in $\nLO$) with difference $T$.

Moreover, it can be seen easily, by expanding the definition of $\varphi_H$, that
\begin{equation}\label{eqn_varphi_derivative}
\varphi_H'(x)\ll H^2.
\end{equation}
Equation~\eqref{eqn_varphi_derivative} together with the 
mean value theorem, using the definition~\eqref{eqn_K_def} and the choice of $T$~\eqref{eqn_varepsilon_small_enough}
imply that the function $\varphi_H\circ K$ along $P\in\partition$
varies by an amount
$\ll 2^{-\etaO}H^2SBV$.
This implies, for all $\nLO\in P\in\partition$,
\begin{equation*}
\sup_{m\in P}
\bigl\lvert
\varphi_H(K(m))
\bigr\rvert
\leq
\bigl\lvert\varphi_H(K(\nLO))\bigr\rvert
+\LandauO(\Efive),
\end{equation*}
where
\begin{equation}\label{eqn_Efive_def}
\Efive\eqdef
H^2SBV 
\sup_{\nOL\in\GetaO}
\biggl\lVert T(\nOL)\frac{\nOL}{2^{\lambda-2\tau-\zeta}}\biggr\rVert,
\end{equation}
and therefore
\begin{equation}\label{eqn_declass}
\begin{aligned}
\frac1{\lvert \partition\rvert}
\sum_{P\in\partition}
\sup_{m\in P}
\bigl\lvert
\varphi_H(K(m))
\bigr\rvert
&=
\frac1{\lvert \partition\rvert}
\sum_{P\in\partition}
\frac1{\lvert P\rvert}
\sum_{\nLO\in P}
\bigl\lvert\varphi_H(K(\nLO))\bigr\rvert
+\LandauO\bigl(\Efive\bigr)
\\&\ll
\frac1{\lvert \JLO\rvert}
\sum_{\nLO\in\JLO}
\bigl\lvert\varphi_H(K(\nLO))\bigr\rvert
+\Efive.
\end{aligned}
\end{equation}
The estimate~\eqref{eqn_declass} embodies the simple observation that $\varphi_H(K(\cdot))$ is ``almost constant'' along arithmetic progression with difference $T$, if $T$ is chosen properly.
We see that $2^{-\etaO}$ will also have to eliminate $V$ (and the \emph{small} values $B$, $H$, $S$) in order to yield a useful result.
By Lemma~\ref{lem_odd_elimination}, we have to allow $T\gg 2^{5\etaO}\gg V^5$:
the length $V$ of the arithmetic progression $P$ will be small,
compared to its common difference $T$.
We will, somewhat arbitrarily, choose $\etaO\sim 4\lambda/139$ later on,
see also Remark~\ref{rem_after_Etwelve}.

Along these arithmetic progressions we need to establish a certain kind of uniform distribution on the remaining intervals (see~\eqref{eqn_second_factor_structure}).
We apply Lemma~\ref{lem_glycerol} on the sum over $\nLO$ in~\eqref{eqn_linearize}, while $\nOL,\sO$, and $\sL$ are parameters.
Two error terms will arise.
\begin{itemize}
\item The first error term is just $\Efive$ defined in~\eqref{eqn_Efive_def}.
It captures the variation of $\varphi_H$ along $P\in\partition$.
\item The second error term
\begin{equation}\label{eqn_Esix_def}
\Esix\eqdef
\frac{H\lvert\JOL\setminus\GetaO\rvert}{\lvert\JOL\rvert} 
+\frac HS
\end{equation}
accounts for the trivial summands $\nOL\not\in\GetaO$ and $\sO=\sL$.
Under the requirement~\eqref{eqn_eta_condition} that
\[\lambda-2\tau-\zeta\ge 4\etaO+4,\]
there are still enough digits below so that Lemma~\ref{lem_odd_elimination} can be applied in a profitable manner:
there is only one ``bad index'' $\nOL$ among $2^{\etaO}$ successive indices.
This implies
\begin{equation}\label{eqn_Esix_estimate}
\Esix\ll\frac H{2^{\etaO}}+\frac HS.
\end{equation}
\end{itemize}
\begin{proposition}\label{prp_glycerol}
Assume that $H,S,B\ge1$ are integers,
and let $\mathcal G\subseteq\JOL$ be any set such that $0\not\in \mathcal G$.
For each $\nOL\in\mathcal G$, let $\partition(\nOL)$ a partition such as in Lemma~\ref{lem_decomposition}, corresponding to the parameters
\begin{equation*}
\begin{aligned}
I&=\JLO,\quad\mbox{and}\\
T&=T(\nOL),\quad V=V(\nOL) \mbox{ are chosen later}.
\end{aligned}
\end{equation*}
We have
\[ 
\begin{aligned}
\bigl\lvert \Snine\bigr\rvert
&\ll
\SX\cdot\SY+\Efive+\Esix,
\end{aligned}
\] 
where
$\Snine$ was defined in~\eqref{eqn_Sa_def},

\smallskip
\begin{align}
\SX&\eqdef
\sup_{\substack{\nOL\in\mathcal G\\\sO\neq\sL}}
\frac1{\lvert \JLO\rvert}
\sum_{\nLO\in\JLO}
\bigl\lvert\varphi_H(K(\nLO))\bigr\rvert,\label{eqn_first_factor_structure}\\
\SY&\eqdef
\frac1{2^{\tau-\zeta}}
\sum_{\nOL\in\mathcal G}
\frac1{S^2}
\sum_{\substack{\sO,\sL\\\sO\neq\sL}}
\sup_{P\in\partition(\nOL)}
\SZ^{1/2},\label{eqn_second_factor_structure}\\
\SZ&\eqdef
\frac1R\sum_{\lvert r\rvert<R}
\frac1{2^\zeta}\sum_{\nOO\in\JOO}
\frac1{\lvert P\rvert}
\sum_{\nLO\in P}
\bigl\lvert \Seight( 
\nLO,\nOL,\nOO,\sO,\sL,\multiple(\nOL),r)\bigr\rvert.\label{eqn_SZ_def}
\end{align}

\end{proposition}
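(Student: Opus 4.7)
The plan is to apply Lemma~\ref{lem_glycerol} (the extremal $(p,q)=(1,\infty)$ case of H\"older) to the inner sum over $\nLO\in\JLO$ in the definition~\eqref{eqn_Sa_def} of $\Snine$, separately for each outer triple $(\sO,\sL,\nOL)$, with $f(\nLO)\eqdef\bigl\lvert\varphi_H(K(\nLO))\bigr\rvert$, the square-root factor from~\eqref{eqn_Sa_def} playing the role of $g(\nLO)$, and the partition $\partition(\nOL)$ supplied by Lemma~\ref{lem_decomposition}. Before doing so, I would peel off two \emph{trivial} contributions: the diagonal $\sO=\sL$ (a fraction $1/S$ of the outer $(\sO,\sL)$-average, bounded crudely by $\lvert\varphi_H\rvert\le H$ and $\lvert\Seight\rvert\le 1$, contributing $\ll H/S$) and the bad indices $\nOL\in\JOL\setminus\mathcal G$ (contributing $\ll H\lvert\JOL\setminus\mathcal G\rvert/\lvert\JOL\rvert$). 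By~\eqref{eqn_Esix_def} these two terms together constitute $\Esix$.

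On the remaining contribution ($\sO\neq\sL$, $\nOL\in\mathcal G$), the key input is the quasi-invariance~\eqref{eqn_declass}: the choice of $T(\nOL)$ together with the mean value theorem, the derivative bound~\eqref{eqn_varphi_derivative} for $\varphi_H$, and the definition~\eqref{eqn_K_def} of $K$ imply that $f$ is nearly constant on each $P\in\partition(\nOL)$; more precisely,
\[
\sup_{\nLO\in P}f(\nLO)\le\frac1{\lvert P\rvert}\sum_{\nLO\in P}f(\nLO)+\LandauO(\Efive).
\]
Summing over $P\in\partition(\nOL)$ and invoking $\lvert P\rvert\ge V/2$ together with $\lvert\partition(\nOL)\rvert\le 2\lvert\JLO\rvert/V$, the first factor of Lemma~\ref{lem_glycerol} satisfies $\sum_{P}\sup_{P}f\ll\lvert\JLO\rvert V^{-1}(\SX+\Efive)$, uniformly in the outer variables. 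For the second factor, Cauchy--Schwarz gives
\[
\sum_{\nLO\in P}g(\nLO)\le\lvert P\rvert^{1/2}\Bigl(\sum_{\nLO\in P}g(\nLO)^2\Bigr)^{1/2}=\lvert P\rvert\smallspace\SZ^{1/2}\le V\sup_{P\in\partition(\nOL)}\SZ^{1/2},
\]
since the average of $g^2$ over $\nLO\in P$ is exactly $\SZ$.

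Multiplying these two bounds according to Lemma~\ref{lem_glycerol} and dividing by $\lvert\JLO\rvert$ yields, for each admissible $(\sO,\sL,\nOL)$, the estimate $\LandauO\bigl((\SX+\Efive)\sup_P\SZ^{1/2}\bigr)$ for the inner $\nLO$-average. Averaging this in $(\sO,\sL)$ with weight $1/S^2$ and in $\nOL\in\mathcal G$ with weight $1/\lvert\JOL\rvert$ then reconstructs $\SX\cdot\SY+\LandauO(\Efive\smallspace\SY)$; since $\lvert\Seight\rvert\le 1$ forces $\SY\le 1$, the latter term is absorbed into $\Efive$. Adding back the trivial-part error $\Esix$ completes the proof. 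The only non-bookkeeping ingredient is the prior construction of $T(\nOL)$ via the odd-elimination Lemma~\ref{lem_odd_elimination}, which is already encoded in the definition of $\mathcal G$ and of $\partition(\nOL)$; the present argument is then a careful pairing of Lemma~\ref{lem_glycerol} with the quasi-invariance of $\varphi_H\circ K$ along arithmetic progressions of common difference $T$.
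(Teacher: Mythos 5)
Your proof is correct and takes essentially the same route as the paper, which gives only a three-sentence sketch (apply Lemma~\ref{lem_glycerol} to the $\nLO$-sum, use~\eqref{eqn_declass}, then a further H\"older/Cauchy--Schwarz, and exclude the trivial cases $\sO=\sL$, $\nOL\notin\mathcal G$). You correctly identify the crucial cancellation: the factor $\lvert\JLO\rvert/V$ from bounding $\lvert\partition\rvert$ in the first H\"older factor exactly compensates the factor $V$ coming from $\lvert P\rvert\le V$ in the Cauchy--Schwarz bound for the second factor, leaving $(\SX+\Efive)\sup_P\SZ^{1/2}$ per outer triple, and you then absorb $\Efive\SY$ into $\Efive$ via $\SY\le1$. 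One small remark, not a flaw in your argument: your peeling naturally produces the term $H\lvert\JOL\setminus\mathcal G\rvert/\lvert\JOL\rvert$, whereas the paper's definition~\eqref{eqn_Esix_def} writes $\GetaO$ in place of $\mathcal G$; since later $\mathcal G=\GetaO\cap\GetaL$, these can differ by $\ll H\Eseven$, which the paper absorbs elsewhere. This is a looseness in the paper's bookkeeping around $\Esix$, and your version is the one that actually follows from the proof.
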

\begin{proof}
We apply Lemma~\ref{lem_glycerol} to the sum over $\nLO$ in~\eqref{eqn_Sa_def},
followed by~\eqref{eqn_declass}.
Together with a third application of H\"older's inequality,
this explains the error term $\Efive$ as well as the provenance of the terms $\SX$ and $\SY$.
Excluding the cases $\sO=\sL$, and also $\nOL\not\in\mathcal G$,
explains the error term $\Esix$.
An application of Cauchy--Schwarz completes the proof.

\end{proof}

In Section~\ref{sec_losing_only_a_logarithm} below 
we will find an upper bound for the mean $\SX$ over $\varphi_H$.
For this bound, we will exclude some more $\nOL$: we will require $\nOL\in\Geta\eqdef\GetaO\cap\GetaL$, where $\GetaO$ is needed for~\eqref{eqn_Esix_def}, and $\GetaL$ will secure small discrepancy (see~\eqref{eqn_mathcalGetaL_def}, leading to a good bound for $\SX$.
This bound will be \emph{uniform} in $\sO,\sL$, and $\nOL$, under the hypotheses that $\sO\neq\sO$ and $\nOL\in\GetaL$. 
The estimate of the expression $\SY$ will be the subject of Section~\ref{sec_uniform}.

%
\subsection{Estimate of $\SX$:\\ The geometric sum contributes only a logarithmic factor}\label{sec_losing_only_a_logarithm}
In this section, we denote
\[ 
\tilde D_U(\alpha)\eqdef
D_U\bigl(\bigl(0,\alpha,2\alpha,\ldots,(U-1)\alpha\bigr)\bigr)
\] 
for brevity.
We are going to study an average discrepancy estimate, where the average is taken over multiples $m\alpha$. 
This result is Lemma~3.4 in~\cite{MuellnerSpiegelhofer2017}.
\begin{lemma}\label{lem_MU}
let $U\ge1$ and $\eta\ge0$ be integers. Then
\[ 
\sum_{d<2^\eta}D_U\biggl(\frac{d}{2^\eta}\biggr)
\ll \frac{U+2^\eta}{U}\bigl(\logp U\bigr)^2.
\] 
\end{lemma}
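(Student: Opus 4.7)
The plan is to apply the Erd\H{o}s--Tur\'an--Koksma inequality (Lemma~\ref{lem_ETK}) to each individual discrepancy $D_U(d/2^\eta)$, taking cutoff parameter $H=U$. Using the standard geometric-sum bound $\bigl\lvert\sum_{n<U}\e(n\alpha)\bigr\rvert\leq\min\bigl(U,1/(2\lVert\alpha\rVert)\bigr)$, summing over $d<2^\eta$, and swapping the order of summation reduces matters to controlling, for each $0<h<U$,
\[
T_h\eqdef\sum_{d<2^\eta}\min\Bigl(1,\tfrac1{U\lVert hd/2^\eta\rVert}\Bigr).
\]

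For fixed $h$, set $g\eqdef\gcd(h,2^\eta)$ and $M\eqdef 2^\eta/g$. The map $d\mapsto hd\bmod 2^\eta$ covers the subgroup $g\mathbb Z/2^\eta\mathbb Z$ with each value attained exactly $g$ times, so $T_h=g\sum_{0\leq k<M}\min\bigl(1,1/(U\lVert k/M\rVert)\bigr)$. Splitting this inner sum according to whether $\lVert k/M\rVert\leq 1/U$ (``plateau'', where $\min=1$) or $\lVert k/M\rVert>1/U$ (``tail'', where $\min$ is smaller), a short computation gives $T_h\ll(g+2^\eta/U)\logp U$. Substituting back and using $\sum_{0<h<U}1/h\ll\logp U$ yields
\[
\sum_{d<2^\eta}D_U(d/2^\eta)\ll\frac{2^\eta}{U}+\logp(U)\sum_{0<h<U}\frac{g}{h}+\frac{2^\eta}{U}\bigl(\logp U\bigr)^2.
\]

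The main obstacle is to show that $\sum_{0<h<U}\gcd(h,2^\eta)/h\ll(\logp U)^2$, which is where one has to beat the naive bound. I would prove this by writing $h=2^jm$ with $m$ odd, so that $\gcd(h,2^\eta)/h$ equals $1/m$ when $j\leq\eta$ and $2^{\eta-j}/m$ when $j>\eta$. With the elementary estimate $\sum_{m\text{ odd},\,m<X}1/m\ll\logp X$, the sum is bounded by
\[
\sum_{0\leq j\leq\eta}\logp(U/2^j)\;+\;\sum_{j>\eta,\,2^j<U}2^{\eta-j}\logp(U/2^j).
\]
When $U\leq 2^\eta$ only the first double sum contributes and only for $j\leq\log_2 U$; summing the resulting arithmetic progression gives $\ll(\logp U)^2$. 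When $U>2^\eta$ the head is bounded by $(\eta+1)\logp U\leq(\logp U)^2$ (using $\eta\leq\log_2 U$) and the tail is a geometric series of order $\logp U$.

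Combining the three contributions $2^\eta/U$, $(\logp U)^2$, and $(2^\eta/U)(\logp U)^2$ (and noting that the first is dominated by the third) yields a bound $\ll(1+2^\eta/U)(\logp U)^2$, which is the desired $\frac{U+2^\eta}{U}(\logp U)^2$.
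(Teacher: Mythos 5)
The paper does not prove this lemma at all; it cites Lemma~3.4 of M\"ullner--Spiegelhofer~\cite{MuellnerSpiegelhofer2017} and moves on, so there is no in-text proof to compare against. Your route --- Erd\H{o}s--Tur\'an--Koksma with $H=U$, the reduction $T_h=g\sum_{0\leq k<M}\min\bigl(1,1/(U\lVert k/M\rVert)\bigr)$ where $g=\gcd(h,2^\eta)$ and $M=2^\eta/g$, and the $\gcd$-sum estimate $\sum_{0<h<U}\gcd(h,2^\eta)/h\ll(\logp U)^2$ via $h=2^jm$ with $m$ odd --- is the natural and correct approach, and the individual ingredients check out.

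There is a small bookkeeping slip in the bound for $T_h$ that, taken literally, loses a logarithm. The plateau (those $k$ with $\lVert k/M\rVert\leq 1/U$, where the minimum equals $1$) consists of $\ll 1+M/U$ values of $k$ and contributes $\ll g(1+M/U)=g+2^\eta/U$ with \emph{no} logarithm; only the tail $\lVert k/M\rVert>1/U$ acquires the factor $\logp U$. The correct conclusion is $T_h\ll g+(2^\eta/U)\logp U$, not $(g+2^\eta/U)\logp U$. As written, your displayed estimate carries the spurious factor into $\logp(U)\sum_{0<h<U}g/h$, which together with $\sum_{0<h<U}g/h\ll(\logp U)^2$ would yield $(\logp U)^3$ rather than $(\logp U)^2$. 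Your closing sentence quietly reports the correct three contributions $2^\eta/U$, $(\logp U)^2$, $(2^\eta/U)(\logp U)^2$, consistent with the corrected $T_h$ bound, so the argument goes through once you state $T_h\ll g+(2^\eta/U)\logp U$ and substitute accordingly.
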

We combine this average discrepancy estimate with the facts that the total variation of $\varphi_H\bigr\rvert_{[0,1]}$ is $\LandauO(H^2)$, and that the integral of $\lvert \varphi_H(x)\rvert$ is bounded by $\LandauO(\logp H)$.
Applying the Koksma--Hlawka inequality (Lemma~\ref{lem_Koksma_Hlawka}), we obtain
\[ 
\frac1U
\sum_{0\leq n<U}
\bigl\lvert\varphi_H(An+B)\bigr\rvert
\ll\logp H+ H^2\tilde D_U(A)
\] 
for all $A,B\in\mathbb R$, $U\ge1$, and integers $H\ge1$.
We wish to apply this estimate to $\SX$, defined in~\eqref{eqn_first_factor_structure}.
In this case, $U=\lvert\JLO\rvert$,
and the slope
\[A=\nOL6(\sO-\sL)\multiple(\nOL)/2^{\lambda-2\tau-\zeta}\]
comes from the first term of $K$~\eqref{eqn_K_def} (while $B$ is irrelevant).
Concerning the supremum over $\nOL\in\mathcal G$ in~\eqref{eqn_first_factor_structure},
we will choose
\[ 
\mathcal G\eqdef \GetaO\cap\GetaL,
\] 
where $\GetaO$ is defined in~\eqref{eqn_GetaO_def},
\begin{align}
\GetaL(\etaL)&\eqdef
\biggl\{
\nOL\in\{1,\ldots,\lvert\JOL\rvert-1\}:
\tilde D_U\biggl(\nOL\frac{6(\sO-\sL)\multiple(\nOL)}{2^{\lambda-2\tau-\zeta}}\biggr)
<2^{-\etaL}
\biggr\}\label{eqn_mathcalGetaL_def},
\end{align}
and $\eta_1$ is chosen later.
We obtain
\begin{equation}\label{eqn_SX_estimate}
\SX\ll \logp H+H^22^{-\eta_1}.
\end{equation}
The restriction to the set $\GetaL$ introduces a new error term
\[ 
\Eseven\eqdef\frac{\lvert \JOL\setminus\GetaL\rvert}{\lvert \JOL\rvert}.
\] 
In order to bound $\Eseven$, we use the 
hypothesis~\eqref{eqn_parameters_stack} that $3\tau\ge\lambda$,
implying $\lvert\JOL\rvert=K2^{\lambda-2\tau-\zeta}$ for some integer $K$
(in fact, in~\eqref{eqn_parameter_choices} we are going to define $\tau\eqdef\lambda/3$, therefore $K=1$). 
Since $\sO-\sL\in\bigl([-S,S]\setminus\{0\}\bigr)\cap\mathbb Z$ and $\multiple(\nOL)\in\{1,\ldots,B\}$, the family
\[ 
f:\JOL\rightarrow[0,1):\nOL\mapsto\nOL\frac{6(\sO-\sL)\multiple(\nOL)}{2^{\lambda-2\tau-\zeta}}\bmod 1
\] 
attains each value at most $CSB\max_{s,b}2^{\nu_2(sb)}\leq C(SB)^2$ times, with an absolute constant $C$.
It is therefore sufficient to apply a mean discrepancy estimate as in Lemma~\ref{lem_MU}.
We obtain
\[\sum_{\nOL\in\JOL}\tilde D_U\bigl(f(\nOL)\bigr)
\ll (SB)^22^{3\tau-\lambda}\lambda^2,\]
where $U=\lvert\JLO\rvert$.
It easily follows that
\begin{equation}\label{eqn_Eseven_estimate}
\Eseven
\ll \frac{(SB)^22^{3\tau-\lambda}}{\lvert\JOL\rvert}\lambda^2
2^{\eta_1}.
\end{equation}

The parameter $\eta_1$ can be chosen freely, but due to~\eqref{eqn_SX_estimate} it does not make much sense to choose it larger than $2\log_2 H$.


\section{Eliminating many digits}\label{sec_uniform} 
The main result of the present section is Proposition~\ref{prp_uniform} below.
It concerns the sum-of-digits function along four arithmetic progressions synchronously.
Roughly speaking, we reduce the estimate of our exponential sum $\Seight$ to two terms: a Gowers norm and a discrepancy term.
The Gowers term is a certain higher order correlation~\cite{Tao2012}.
In our case, it originates from the iterated application of van der Corput's inequality.
The discrepancy term captures the deviation from independent and uniform behaviour of the digits of the four slopes.
Later (see Section~\ref{sec_applying_prp_uniform} below), we will specialize to the situation given by the cubes, choosing as slopes the four values $\alpha(\varepsilon,\ts)$ defined in~\eqref{eqn_Qprime_def}.
\subsection{Bounding $\Seight$ by the sum of a Gowers norm and a discrepancy term}\label{sec_gowers_and_discrepancy}
The following proposition executes the iterated cutting away of binary digit blocks, reducing the problem (essentially) to estimating (1) a Gowers norm
$\lVert\ThueMorse\rVert^{2^Q}_{U^Q(\mathbb Z/2^\rho\mathbb Z)}$, and (2) a four-dimensional discrepancy, given by $\Eeleven$ below.
We introduce four more error terms, appearing in the proposition.

\begin{itemize}
\item The first error term comes from the iterated application of van der Corput's inequality. 
\begin{equation}\label{eqn_Eeight_def}
\Eeight\eqdef
\frac{R_1}{2^\mu}\sum_{\substack{0\leq\ell\leq L\\0\leq j<4}}M_{\ell,j}
+\frac1R_1,
\end{equation}
\item The second error term captures carry overflow appearing in one of the margins below our intervals of digits to be cut out.
\begin{equation}\label{eqn_Enine_def}
\Enine\eqdef
2^{-\delta}+2^{4(L+1)}L\smallspace D_{2^\mu}\bigl(\alpha_j2^{-a_\ell}\bigr).
\end{equation}
\item
The third error term captures the transition from sums over the index set $\{1,\ldots,R-1\}$ to sums over dyadic intervals:
\begin{equation}\label{eqn_Eten_def}
\Eten(R,c,d)\eqdef \frac{2^{d-c}}{R_1}.
\end{equation}
The values $c$ and $d-1$ are the lower and upper ends, respectively, of the window of binary digits that remains after the elimination procedure.
We will obtain a Gowers norm, where the summation variables range over $\{0,\ldots,2^{d-c}-1\}$.
\item
The fourth error term will be used to contain the deviation from independent uniform distribution of the digits, with indices in $[c,d)$, of our four slopes $\alpha(\varepsilon,\ts)$.
This is just a four-dimensional discrepancy (with an additional factor $2^{4(d-c)}$),
\begin{equation}\label{eqn_fourfold_discrepancy}
\begin{aligned}
\Eeleven(\mu,c,d)&\eqdef
2^{4(d-c)}
\sup_{\substack{k_0,k_1,k_2,k_3\in\mathbb R\\0\leq k_0,k_1,k_2,k_3<2^{d-c}} }
\Biggl\lvert
\frac{
\#
A(k_0,k_1,k_2,k_3)
}{2^\mu}
-\frac{k_0k_1k_2k_3}{2^{4(d-c)}}
\Biggr\rvert,\\
A(k_0,k_1,k_2,k_3)&=A(\mu,c,d,k_0,k_1,k_2,k_3,\alpha_0,\alpha_1,\alpha_2,\alpha_3)
\\&\eqdef
\biggl\{
n<2^\mu:
\frac{n\alpha_j}{2^c}\in\bigl[k_j,k_j+1\bigr)+2^{d-c}\smallspace\mathbb Z
\mbox{ for }
0\leq j<4
\biggr\}.
\end{aligned}
\end{equation}

This only differs from a regular discrepancy by the additional factor $2^{4(d-c)}$:
\[
\Eeleven(\mu,c,d)
=2^{4(d-c)}
D_{2^\mu}\biggl(
\biggl(\frac{n\alpha_0}{2^d},\frac{n\alpha_1}{2^d},\frac{n\alpha_2}{2^d},\frac{n\alpha_3}{2^d}\biggr)_{n\ge0}
\biggr)
\]
This factor care of the possible digit configurations on our window $[c,d)$:
by the Koksma--Hlawka inequality we have to consider ``variation of a step-function $\times$ discrepancy''.
\end{itemize}

\begin{proposition}\label{prp_uniform}
Assume that
$\alpha=\bigl(\alpha_0,\alpha_1,\alpha_2,\alpha_3\bigr)\in\mathbb N^4$, 
$\beta=\bigl(\beta_0,\beta_1,\beta_2,\beta_3\bigr)\in\mathbb N^4$,
$\mu,a,b,c,\kappa,L,\delta$ are nonnegative  integers,
and $a\leq b-L\kappa-\delta$.
Let us define
\begin{equation}\label{eqn_hoffnung}
K(\mu,a,b,\alpha,\beta)\eqdef
\frac1{2^\mu}
\sum_{0\leq n<2^\mu}
\prod_{0\leq j<4}
\e\biggl(\frac12\digitsum^{[a,b)}\bigl(n\alpha_j+\beta_j\bigr)\biggr),
\end{equation}
\[ 
\left.\begin{array}{r@{\hspace{0.2em}}l}
b_\ell&\eqdef b-\ell\kappa,\\
a_\ell&\eqdef b-(\ell+1)\kappa
\end{array}
\right\}
\mbox{ for $0\leq\ell<L$,}
\] 
and assume that $a\leq c\leq a_{L-1}$.
Suppose that the following two properties are satisfied.
\begin{enumerate}\item The \emph{odd elimination property}:
\begin{equation}\label{eqn_odd_elimination_applied}
\begin{aligned}
\mbox{There exist integers $M_{\ell,j}$ for $0\leq\ell<L$, $0\leq j<4$, such that}\\
\left.\begin{array}{l}
2\nmid M_{\ell,j},\\[1mm]
1\leq M_{\ell,j}<2^{5(\kappa+\delta)+7},\\[1mm]
\bigl(M_{\ell,j}\smallspace\alpha_j\bigr)^{[a_\ell-\delta,b_{\ell})}
=0\end{array}\right\}
\quad\mbox{for}\quad
\left\{\begin{array}{ll}
0\le\ell<L,\\0\leq j<4.\end{array}\right.
\end{aligned}
\end{equation}
\item Removing the lowest digit block:
\begin{equation}\label{eqn_lowest_digits_condition}
\begin{aligned}
\mbox{There exist integers $M_{L,j}$, for $0\leq j<4$, such that}\\
\left.\begin{array}{l}
1\leq M_{L,j}<2^{3(c-a)},\\[1mm]
\bigl(M_{L,j}\smallspace\alpha_j\bigr)^{[a-\delta,c+1)}
=2^c\end{array}\right\}
\quad\mbox{for}\quad 0\leq j<4.
\end{aligned}
\end{equation}
\end{enumerate}
Then
\begin{equation}\label{eqn_uniform}
\begin{aligned}
\bigl\lvert K(\mu,a,b,\alpha,\beta)\bigr\rvert^{2^Q}
&\ll
\frac 1{2^{(Q+1)\rho}}
\lVert
\ThueMorse
\rVert^{2^Q}_{U^Q(\mathbb Z/2^\rho\mathbb Z)}
+\Eeight+\Enine+\Eten+\Eeleven
\end{aligned}
\end{equation}
with an implied constant only depending on $L$,
where $Q\eqdef 4(L+1)$,
\begin{equation}\label{eqn_Unorm_def}
\lVert
\ThueMorse
\rVert^{2^Q}_{U^Q(\mathbb Z/2^\rho\mathbb Z)}
\eqdef
\sum_{r\in\{0,\ldots,2^\rho-1\}^Q}
\sum_{0\leq n<2^\rho}
\prod_{\varepsilon\in\{0,1\}^Q}
\e\Biggl(\frac12\digitsum^{[0,\rho)}\bigl(n+\varepsilon\cdot r\bigr)\Biggr),
\end{equation}
and
\[ 
\rho\eqdef 
a_{L-1}-c=(b-a)-L\kappa-(c-a).
\] 
\end{proposition}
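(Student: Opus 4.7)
The plan is to apply the iterated van der Corput inequality (Lemma~\ref{lem_vdC_iterated}) exactly $Q=4(L+1)$ times to the variable $n$ inside $K(\mu,a,b,\alpha,\beta)$, using the $4(L+1)$ multipliers $\{M_{\ell,j}\}$ furnished by~\eqref{eqn_odd_elimination_applied} and~\eqref{eqn_lowest_digits_condition}, enumerated as $M_0,\ldots,M_{Q-1}$ via a bijection of $\{0,\ldots,Q-1\}$ with $\{0,\ldots,L\}\times\{0,1,2,3\}$. This produces the bound
\[
|K|^{2^Q}\ll\frac{1}{R_1^Q}\sum_{r\in\{1,\ldots,R_1-1\}^Q}\bigl|\tilde K(r_0M_0,\ldots,r_{Q-1}M_{Q-1})\bigr|+\LandauO(\Eeight),
\]
in which $\tilde K$ is a correlation consisting of a product over $\varepsilon\in\{0,1\}^Q$ of $2^Q$ translates of the original integrand. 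Expanding each translate, the integrand of $\tilde K$ becomes a product of $4\cdot 2^Q$ exponentials of the form $\e\bigl(\tfrac12\digitsum^{[a,b)}(n\alpha_j+\beta_j+\alpha_j(\varepsilon\cdot(r\odot M)))\bigr)$, which, by pulling the exponential outside, amounts to computing the parity of the digit contributions at each position $k\in[a,b)$ for each slope $j$.

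Next I would eliminate, window by window from top to bottom, the contributions of the intervals $[a_\ell,b_\ell)$ for $\ell=0,\ldots,L-1$. Fixing a pair $(\ell,j)$, let $i^*\in\{0,\ldots,Q-1\}$ denote its index in the bijection. Hypothesis~\eqref{eqn_odd_elimination_applied} gives $(M_{i^*}\alpha_j)^{[a_\ell-\delta,b_\ell)}=0$; taking $R_1\le 2^\delta$ forces the shift $r_{i^*}M_{i^*}\alpha_j$ to vanish on the same interval. Assuming for now that no carry propagates into $[a_\ell,b_\ell)$ from below, the digit at every position $k\in[a_\ell,b_\ell)$ of the slope-$j$ argument is then independent of $\varepsilon_{i^*}$; consequently the sum of these digits over $\varepsilon\in\{0,1\}^Q$ is even, and the corresponding exponential factor collapses to $+1$. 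Iterating over all $4L$ pairs $(\ell,j)$ clears the block $[a_{L-1},b)$ entirely, and the final quadruple $\ell=L$, powered by $(M_{L,j}\alpha_j)^{[a-\delta,c+1)}=2^c$, removes the block $[a,c)$ analogously via the isolated bit at position $c$.

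The main obstacle is to justify the ``no carry propagation'' assumption quantitatively. A carry into $[a_\ell,b_\ell)$ from below can occur only when the $\delta$ digits of the shifted argument just beneath $a_\ell$ form a full block of $1$s --- heuristically a fraction $2^{-\delta}$ of the $n$'s, but this must be controlled pointwise. I would quantify the exceptional set through the one-dimensional discrepancy $D_{2^\mu}(\alpha_j 2^{-a_\ell})$; summing the failures over the $4L$ relevant pairs $(\ell,j)$ and the $2^Q$ tuples $\varepsilon$ produces exactly the error $\Enine$. A separate rescaling step, replacing each integer shift $r_i\in\{1,\ldots,R_1-1\}$ by a shift $\tilde r_i\in\{0,\ldots,2^{d-c}-1\}$ sweeping out a full residue system in the relevant quotient, then accounts for the transition error $\Eten$.

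After all these cancellations, only the digits in the window $[c,a_{L-1})$ of length $\rho$ survive, and the four slopes enter only through the residues $(n\alpha_j\bmod 2^\rho)_{j=0}^{3}$. To replace the actual average in $n$ by the ``independent'' one --- in which these four residues are jointly uniform --- one invokes the near-equidistribution of the four-dimensional sequence $(n\alpha_j/2^{a_{L-1}})_{j=0}^{3}$ on the torus and applies the Koksma--Hlawka inequality (Lemma~\ref{lem_Koksma_Hlawka}) to the step function indicator of a digit configuration in $[c,a_{L-1})$ per coordinate; the error is precisely $\Eeleven$. The resulting main term is exactly $\frac{1}{2^{(Q+1)\rho}}\lVert\ThueMorse\rVert^{2^Q}_{U^Q(\mathbb Z/2^\rho\mathbb Z)}$, completing the bound~\eqref{eqn_uniform}.
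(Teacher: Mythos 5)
Your proposal reconstructs the paper's proof faithfully: you apply Lemma~\ref{lem_vdC_iterated} with the $Q=4(L+1)$ multipliers $M_{\ell,j}$ to get $\Eeight$, excise the windows $[a_\ell,b_\ell)$ using the margin $[a_\ell-\delta,a_\ell)$ to control carries (giving $\Enine$ via one-dimensional discrepancy), rescale the shifts to full residue systems ($\Eten$), replace the correlated four-dimensional digit distribution by an independent one via Koksma--Hlawka ($\Eeleven$), and land on the $U^Q$ Gowers norm. One remark on ordering: the paper invokes $\Eeleven$ \emph{before} the rescaling, collapsing the four slopes to the single slope $\alpha_0$, so that the fact that the $M_{\ell,i}$ are \emph{odd} (preserving $\nu_2(\alpha_0)$) can be used to choose a single power-of-two factor $U$ making $r\mapsto rUM_{\ell,i}\alpha_0$ sweep the digit window $[c,d)$ exactly once per period; with your order (rescale first, decouple second), the sweep would have to be coherent across four slopes with possibly different $2$-adic valuations, which is awkward and is precisely what passing to $\alpha_0$ first sidesteps. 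This is a detail of execution rather than a different method, and the rest of your outline --- in particular the identification of $\Enine$ with carry control on the margins and of $\Eeleven$ as the fourfold discrepancy --- matches the paper.
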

Note that the variables $\alpha_j$ are going to be given by $T\smallspace\alpha(\varepsilon,\ts)$,
and that our sum $\Seight$ is indeed of the form~\eqref{eqn_hoffnung}.
We will prove in Section~\ref{sec_odd_elimination} that the structure~\eqref{eqn_Qprime_def} of our slopes $\alpha(\varepsilon,\ts)$ (depending on $\nOL$ and $\nLO$) implies the existence of integers $M_{\ell,j}$ such as in~\eqref{eqn_odd_elimination_applied} and~\eqref{eqn_lowest_digits_condition}, \emph{for most} parameters $\nOL$ and $\nLO$.

\subsubsection{Proof of Proposition~\ref{prp_uniform}, part 1:
Discarding many digits} 

The factors $M_{L,0}$, $M_{L,1}$, $M_{L,2}$, $M_{L,3}$ will be chosen later in the proof.
We apply van der Corput's inequality $4(L+1)$ times, with factors $M_{\ell,0},\ldots,M_{\ell,3}$, where $0\leq\ell\leq L$.
This is achieved by Lemma~\ref{lem_vdC_iterated}, which yields
\[ 
\bigl\lvert K\bigr\rvert^{2^{4(L+1)}}\ll
\frac1{R^{4(L+1)}}
\sum_{r\in\{1,\ldots,R-1\}^{(L+1)\times 4}}
\bigl\lvert
K_1(r,M)
\bigr\rvert
+\Eeight,
\] 
where $(L+1)\times 4\eqdef\{0,\ldots,L\}\times\{0,1,2,3\}$,
\begin{equation}\label{eqn_K1_def}
K_1(r,M)\eqdef
\frac1{2^\mu}
\sum_{0\leq n<2^\mu}
\prod_{0\leq j<4}
\prod_{\varepsilon\in\{0,1\}^{(L+1)\times 4}}
\e\Biggl(\frac12\digitsum^{[a,b)}\Biggl(\Biggl(n+
\sum_{\substack{0\leq\ell\leq L\\0\leq i<4}}
\varepsilon_{\ell,i}r_{\ell,i}M_{\ell,i}\Biggr)\alpha_j+\beta_j\Biggr)\Biggr),
\end{equation}
and the error term $\Eeight$ defined in~\eqref{eqn_Eeight_def}.

In order to complete the removal of the $L$ intervals $[a_\ell,b_\ell)$,
we want to avoid carry overflow on the margins $[a_\ell-\delta,a_\ell)$.
More precisely, for each $j\in\{0,1,2,3\}$ and $\ell\in\{0,\ldots,L-1\}$,
and all
$\varepsilon\in\{0,1\}^{(L+1)\times 4}$
we require
\begin{equation}\label{eqn_many_APs}
\biggl\lVert
\frac{n\alpha_j+\tilde\beta_{j,\varepsilon}}{2^{a_\ell}}
\biggr\rVert
\ge2^{-\delta},\quad\mbox{where}\quad
\tilde\beta_{j,\varepsilon}\eqdef
\sum_{\substack{0\leq \ell\leq L\\0\leq i<4}}\varepsilon_{\ell,i}r_{\ell,i}M_{\ell,i}\alpha_j+\beta_j.
\end{equation}

The number of integers $n<2^\mu$ violating~\eqref{eqn_many_APs} for some
$\varepsilon 
\in\{0,1\}^{(L+1)\times 4}$ and some $\ell\in\{0,\ldots,L-1\}$
will be estimated by (a factor times)
\[ 
2^\mu\Enine,
\] 
where $\Enine$ is defined in~\eqref{eqn_Enine_def}.
The remaining integers $n$ have the important property that the term that is summed in~\eqref{eqn_K1_def} does not depend on the digits in $[a_\ell,b_\ell)$, for all $\ell\in\{0,\ldots,L-1\}$.

We also wish to cut out the interval $[a,c)$.
This is easier, as~\eqref{eqn_lowest_digits_condition} is a quite specialized condition.
Adding $\varepsilon_{L,j}r_{L,j}M_{L,j}\alpha_j$ does not change the digits with indices in $[a,c)$ if $r_{L,j}<2^\delta$.
This is guaranteed by our choices
$R_1\asymp 2^{\Xi\nu/40}$ and $\delta\sim 3\Xi\nu/100$ later on~\eqref{eqn_small_parameter_choices},~\eqref{eqn_cutout_length}.

We see that this interval do not contribute to the expression $K_1(r,M)$ either, and no further $n$ have to be excluded.
In other words, for all but $\ll2^\mu\Enine$ exceptional $n\in\{0,\ldots,2^\mu-1\}$ we may replace $[a,b)$ by $[b_L,a_{L-1})$.
We re-insert the exceptional integers $n$ after truncating the sum-of-digits function.
For each choice of $r\in\{1,\ldots,R-1\}^{(L+1)\times4}$, we obtain
\[ 
\bigl\lvert K\bigr\rvert^{2^Q}\ll
\frac1{R^Q}
\sum_{r\in\{1,\ldots,R-1\}^{(L+1)\times 4}}
\bigl\lvert
K_2(\mu,L,r,M,b_L,a_{L-1},\alpha,\beta)
\bigr\rvert
+\Eeight+\Enine,
\] 
where $Q\eqdef 4(L+1)$, and
\[ 
\begin{aligned}
\hspace{2em}&\hspace{-2em}
K_2(\mu,L,r,M,c,d,\alpha,\beta)\\&\eqdef
\frac1{2^\mu}
\sum_{0\leq n<2^\mu}
\prod_{\substack{0\leq j<4\\
\varepsilon\in\{0,1\}^{(L+1)\times 4}} }
\e\Biggl(\frac12\digitsum^{[c,d)}\Biggl(\Biggl(n2^c+
\sum_{\substack{0\leq\ell\leq L\\0\leq i<4}}
\varepsilon_{\ell,i}r_{\ell,i}M_{\ell,i}\Biggr)\alpha_j+\beta_j\Biggr)\Biggr).
\end{aligned}
\] 
This completes the first part of our proof of Proposition~\ref{prp_uniform}.

\bigskip\noindent
\begin{remark}
Having overcome the problem ``too many significant digits'',
there are in fact no further serious complications to be expected ---
we have reached a milestone in our proof.
\end{remark}

\bigskip\noindent
The next step is the replacement of our expression $K_2$ involving \emph{four} slopes $\alpha_j$ by a sum featuring only $\alpha_0$.
This introduces the error $\Eeleven$, which is a four-dimensional discrepancy.
Furthermore, the resulting sum will be transformed into a Gowers norm, by using a uniform distribution argument of the digits of $r_{\ell,i}M_{\ell,i}\alpha_0$.

\subsubsection{Proof of Proposition~\ref{prp_uniform}, part 2:
Gowers norms and discrepancy}
\label{sec_part_2}

The presence of \emph{four} slopes $\alpha_j$ is basically due to our first two applications of van der Corput's inequality, see~\eqref{eqn_S4_estimate} and~\eqref{eqn_S8_S9}.
Introducing the discrepancy term $\Eeleven$ defined in~\eqref{eqn_fourfold_discrepancy} (which also allows us to discard the shift $\beta_j$), we have
\begin{equation*}
\begin{aligned}
\hspace{2em}&\hspace{-2em}
K_2(\mu,L,r,M,c,d,\alpha,\beta)\\&=
\frac1{2^\mu}
\sum_{0\leq n<2^\mu}
\prod_{\substack{0\leq j<4\\
\varepsilon\in\{0,1\}^{(L+1)\times 4}} }
\e\Biggl(\frac12\digitsum^{[c,d)}\Biggl(n2^c+
\sum_{\substack{0\leq\ell\leq L\\0\leq i<4}}
\varepsilon_{\ell,i}r_{\ell,i}M_{\ell,i}\alpha_j\Biggr)\Biggr)
+\LandauO\bigl(\Eeleven).
\end{aligned}
\end{equation*}
Therefore
\[ 
\begin{aligned}
\hspace{2em}&\hspace{-2em}
\bigl\lvert K_2(\mu,L,r,M,c,d,\alpha,\beta)\bigr\rvert\leq
\bigl\lvert
K_3
\bigr\rvert
+\LandauO\bigl(\Eeleven),
\end{aligned}
\] 
where
\[ 
K_3\eqdef
\frac1{2^\mu}
\sum_{0\leq n<2^\mu}
\prod_{\varepsilon\in\{0,1\}^{(L+1)\times 4}}
\e\Biggl(\frac12\digitsum^{[c,d)}\Biggl(n2^c+
\sum_{\substack{0\leq\ell\leq L\\0\leq i<4}}
\varepsilon_{\ell,i}r_{\ell,i}M_{\ell,i}\alpha_0\Biggr)\Biggr).
\] 

As a second step in transforming the main term into a Gowers norm,
we use the requirement~\eqref{eqn_lowest_digits_condition} 
again, but only for $j=0$.
Since the factor $M_{\ell,i}$ is odd,
the position of the least significant $\tL$ in $\alpha_0$ does not move under multiplication by $M_{\ell,i}$.
Correspondingly, we may choose $U$, a power of two, in such a way that
\[\bigl(UM_{\ell,i}\alpha_0\bigr)^{[0,c]}=2^c.\]
Note that the digits in $[c,d)$ of the multiples $r_1UM_{\ell,i}\alpha_0$,
where $r_1$ varies in an interval of length $2^{d-c}$,
attain each value exactly once.
For each of the $Q$ sums over $r_{\ell,i}$ we therefore replace the summation range $\{1,\ldots,R_1-1\}$ by $\{0,\ldots,k2^{d-c}U-1\}$ for some integer $k\ge0$,
introducing the error $\Eten$ defined in~\eqref{eqn_Eten_def}.
Writing 
$r_{\ell,i}=r^{(1)}_{\ell,i}U+r^{(0)}_{\ell,i}$, where $r^{(0)}_{\ell,i}<U$,
we visit all digit combinations in $[c,d)$ in a uniform manner, as $r^{(1)}_{\ell,i}$ runs.
We immediately obtain~\eqref{eqn_uniform},
which finishes the proof of Proposition~\ref{prp_uniform}.
\qed

\subsection{Applying Proposition~\ref{prp_uniform}}\label{sec_applying_prp_uniform}

In this section, we apply Proposition~\ref{prp_uniform}
in order to derive an upper bound for $\SY$ (see~\eqref{eqn_second_factor_structure}).
For this, we have to study in detail the identity~\eqref{eqn_S9_linear3} for $\Seight$,
and form a certain average in $(\nLO,\nOO)\in P\times\JOO$ (see~\eqref{eqn_SZ_def}), followed by a supremum over $P\in\partition(\nOL)$ and a sum over $\nOL$~\eqref{eqn_second_factor_structure}.
The role of the sum over $n$ in Proposition~\ref{prp_uniform} is taken by $\nLL\in\JLL$, that is, $\mu=\nu-\rho$
The four slopes we are dealing with (see~\eqref{eqn_Qprime_def}) are given by
\[ 
\begin{array}{r@{\hspace{1mm}}lr@{\hspace{1mm}}lr@{\hspace{1mm}}lr@{\hspace{1mm}}l}
\alpha_0&\eqdef\Qprime(0,\sO),& 
\alpha_1&\eqdef\Qprime(0,\sL),& 
\alpha_2&\eqdef\Qprime(1,\sO),& 
\alpha_3&\eqdef\Qprime(1,\sL).  
\end{array}
\] 
Assume that $\beta_0,\beta_1,\beta_2,\beta_3$ are arbitrary values not depending on $n$ (note that $\beta$ does not play a role in Proposition~\ref{prp_uniform}).
We are going to eliminate windows of digits.
Let the integer parameters $L,\kappa\ge0$ be chosen later, and set
\[a\eqdef\tau, \quad b\eqdef u-\rho,\]
\begin{equation*}
\left.\begin{array}{r@{\hspace{0.2em}}l}
b_\ell&\eqdef b-\ell\kappa,\\
a_\ell&\eqdef b-(\ell+1)\kappa
\end{array}
\right\}
\mbox{ for $0\leq\ell<L$.}
\end{equation*}

We are interested in eliminating digits in the intervals $[a_\ell-\delta,b_\ell)$,
where $\delta$ is a margin, defined in~\eqref{eqn_cutout_length}.
In order to obtain an odd elimination factor, we need a margin below the interval that is about three times as big as the interval itself, plus a margin of width $\delta$.
Let us assume that
\[ 
\tau\leq u-\rho-L\kappa-4(\kappa+\delta)
).
\] 
in other words,
elimination of digits takes place within the interval $[a,b)$.
Making Proposition~\ref{prp_uniform} applicable to our situation mainly concerns two issues.
\begin{enumerate}
\item The (one-dimensional) uniform distribution of digit blocks appearing in the slopes $\alpha(\varepsilon,\ts)$.
This is needed for the \emph{odd elimination property}~\eqref{eqn_odd_elimination_applied}), and will be established in Section~\ref{sec_odd_elimination}.
We will show uniform distribution of the digits of our four slopes in an interval containing $[a_\ell,b_\ell)$ (this interval will be of size $4(\kappa+\delta)+4$, see Lemma~\ref{lem_odd_elimination}), in order to guarantee the existence of \emph{odd factors} $M_{\ell,j}$ eliminating the digits on $[a_\ell-\kappa,b_\ell)$.
\item The estimate of the error~$\Eeleven$ (defined in \eqref{eqn_fourfold_discrepancy}), which is the \emph{fourfold independence} of digit blocks in the interval $[c,d)$ that remains after our iterated elimination procedure.
As soon as this is established, we may replace the sum~\eqref{eqn_hoffnung} of a fourfold product by a fourfold product of sums.
We will yield such an estimate in Section~\ref{sec_fourfold}.
\end{enumerate}

\subsubsection{Verifying the odd elimination property}\label{sec_odd_elimination}
In order to apply Proposition~\ref{prp_uniform},
we need the existence of odd elimination factors
--- the ``odd elimination property''~\eqref{eqn_odd_elimination_applied} ---
 for the slopes $\alpha(\varepsilon,\ts)$ defined in~\eqref{eqn_Qprime_def}, for most parameters $\nOO$ and $r$.
For obtaining an odd elimination factor for most $\nLO\in\JLO$, we will show uniform distribution of the digits of each of our four slopes in a bigger window.
This window is about four times the size of our target interval $[a_\ell-\delta,b_\ell)$ that we want to eliminate, which is due to the term $4\kappa+4$ in Lemma~\ref{lem_odd_elimination}.
Assume that $\ell\in\{0,\ldots,L-1\}$.
We are therefore interested in the discrepancy of
\begin{equation}\label{eqn_essential_slope}
\nLO\mapsto \frac{6(\nO+\varepsilon r)2^\tau\nLO}{2^{b_\ell}},
\end{equation}
as $\nLO$ varies in the arithmetic progression $P$, having difference $T\in2\mathbb N+1$.
Note that, due to rotation invariance of the discrepancy, we can skip the second and third summands in~\eqref{eqn_Qprime_def}.
In particular, $\ts$ is irrelevant for this issue altogether.
It remains to find an estimate for the average discrepancy
\[A\eqdef\frac1{2^\tau}\sum_{\substack{\nOO\in\JOO\\\nOL\in\JOL}}
F_1(\nOO,\nOL,\ell),\]
where
\[ 
F_1(\nOO,\nOL,\ell)\eqdef
\sup_{P\in\partition(\nOL)}D_{\lvert P\rvert}
\bigl(6\bigl(\nOL2^\zeta+\nOO+\varepsilon r\bigr)T(\nOL)2^{\tau-b_\ell}\bigr),
\] 
see~\eqref{eqn_second_factor_structure}.
Such an estimate will be contained in the new error term $\Etwelve$
(in that error term, we need to collect the contributions coming from each $\ell\in\{0,\ldots,L-1\}$).
We need to get rid of the dependencies $\partition=\partition(\nOL)$ and $T=T(\nOL)$, and take advantage of the sum over $\nOL$.
For brevity, set 
\begin{equation}\label{eqn_T0_def}
T_0\eqdef 2^{5\etaO+7},
\end{equation}
see~\eqref{eqn_varepsilon_small_enough}.
That is, the quantity $T_0$ is an upper bound for the values $T(\nOL)$.
Moreover, note that by~\eqref{eqn_V_introduction} the size of each arithmetic progression $P\in\partition$ satisfies 
\[\lvert P\rvert\asymp V.\]

By Erd\H{o}s--Tur\'an we obtain
\[
\begin{aligned}
A&\leq
\frac1{H_2}
+
\frac1{2^\tau}
\sum_{\substack{\nOO\in\JOO\\\nOL\in\JOL}}
\sup_{P\in\partition(\nOL)}
\sum_{1\leq\lvert h\rvert<H_2}
\frac 1{\lvert h\rvert}
\Biggl\lvert
\frac1{\lvert P\rvert}
\sum_{0\leq p<\lvert P\rvert}
\exp\bigl(hp\bigl(6\bigl(\nOL2^\zeta+\nOO+\varepsilon r\bigr)T(\nOL)2^{\tau-b_\ell}\bigr)\bigr)
\Biggr\rvert
\\&\leq
\frac1{H_2}
+
\frac1{2^\tau}
\sum_{\substack{\nOO\in\JOO\\\nOL\in\JOL}}
\sup_{P\in\partition(\nOL)}
\sum_{1\leq\lvert h\rvert<H_2}
\frac 1{\lvert h\rvert}
\min\biggl(1,\frac1{\lvert P\rvert}
\bigl\lVert h\bigl(6\bigl(\nOL2^\zeta+\nOO+\varepsilon r\bigr)T(\nOL)2^{\tau-b_\ell}\bigr)\bigr\rVert^{-1}\biggr)
\\&
\ll
\frac1{H_2}
+
\frac1{2^\tau}
\sum_{\substack{\nOO\in\JOO\\\nOL\in\JOL}}
\sum_{1\leq\lvert h\rvert<H_2}
\frac 1{\lvert h\rvert}
\min\biggl(1,\frac1V
\bigl\lVert h\bigl(6\bigl(\nOL2^\zeta+\nOO+\varepsilon r\bigr)T(\nOL)2^{\tau-b_\ell}\bigr)\bigr\rVert^{-1}\biggr)
\\&
\ll
\frac1{H_2}
+
(\logp H_2)
\frac1{2^\tau}
\sum_{\nOL\in\JOL}
\sum_{0\leq n<6H_2\lvert\JOO\rvert}
\min\biggl(1,\frac1V
\bigl\lVert
nT(\nOL)2^{\tau-b_\ell}
+
\beta(\nOL)
\bigr\rVert^{-1}\biggr),
\end{aligned}
\]
where
\[\beta(\nOL)\eqdef
h\bigl(6\bigl(\nOL2^\zeta+\varepsilon r\bigr)T(\nOL)2^{\tau-b_\ell}\bigr),\]
for all integers $H_2\ge1$ and $0\leq \ell<L$.
Note that he length of summation over $n$,
which is bounded below by
$\geq\lvert \JOO\rvert=
2^\zeta\asymp 2^{\lambda/6-\lambda/139}$,
may be considerably smaller than the maximum \emph{fineness}
\[b_0-\tau=u-\tau-\rho\sim(1-11\Xi)\nu/3\]
with which the unit interval is sampled.
(See the definitions in Section~\ref{sec_finishing} below.)
On the other hand, $T(\nOL)$ is odd,
so that the fineness is at least $b_{L-1}-\tau\geq 65\kappa$~\eqref{eqn_65}.

We therefore distinguish between two cases, corresponding to large and small values of $\ell$, respectively.
\begin{enumerate}
\item Assume first that $\zeta\geq b_\ell-\tau$.
We obtain a regular sampling of the unit interval.
Koksma--Hlawka (Lemma~\ref{lem_Koksma_Hlawka} implies
\[A\ll
\frac1{H_2}
+H_2(\logp H_2)
\biggl(
\frac{\logp V}V
+
2^{\tau-b_\ell}
\biggr),
\]
therefore
\[A\ll
\nu\cdot
\bigl(V^{-1/2}+2^{(\tau-b_\ell)/2}\bigr),
\]
where the factor $\nu$ accounts for logarithmic terms.

\item If $\zeta<b_\ell-\tau$, we extend the summation over $n$ to
$[0,6H_22^{b_\ell-\tau})$, yielding
the result from the first case, multiplied by a factor $2^{b_\ell-\tau-\zeta}$.
\end{enumerate}
Summarizing, 
\begin{equation}\label{eqn_Etwelve_preparation}
A\ll
\nu \max\bigl(1,2^{b_\ell-\tau-\zeta}\bigr)
\biggl(V^{-1/2}+2^{(\tau-b_\ell)/2}\biggr).
\end{equation}
\begin{remark}\label{rem_after_Etwelve}
We see that the gain $V^{-1/2}$ has to eliminate the factor $2^{b_0-\tau-\zeta}\gg 2^{\lambda/139}$ appearing for $\ell=0$.
(Note the definition~\eqref{eqn_parameter_choices} further down.)
The factor $V$, in turn, has to be dominated by $2^{\etaO}$ (see~\eqref{eqn_Efive_def}), which is guaranteed by our choices $V\asymp 2^{3\lambda/139}$ and $\etaO\sim 4\lambda/139$, which we will make in~\eqref{eqn_parameter_choices} and~\eqref{eqn_etaO_choice}.
Furthermore, $2^{\etaO}$ is smaller than $T_0^{1/5}$ (see~\eqref{eqn_T0_def}),
and we see that we need to allow our bound $T_0$ to be of size around $2^{20\lambda/139}$.
We will take these conditions into account later.
\end{remark}

Equation~\ref{eqn_Etwelve_preparation} provides an upper bound for the number of \emph{critical values} $(\nOO,\nLO)\in\JOO\times\JLO$ that we have to exclude,
where we cannot guarantee small discrepancy along $\nLO\in P$.
Namely,
\begin{align*}
\mathcal C&\eqdef\bigl\{(\nOO,\nOL)\in\JOO\times\JOL:
F_1(\nOO,\nOL,\ell)\geq 2^{-\eta}
\bigr\}
\end{align*}
satisfies
\begin{equation}\label{eqn_critical_values_upper_bound}
\frac1{2^\tau}
\bigl\lvert \mathcal C\bigr\rvert\ll
\nu\smallspace 2^{\eta}
\max\bigl(1,2^{b_\ell-\tau-\zeta}\bigr)
\biggl(V^{-1/2}+2^{(\tau-b_\ell)/2}\biggr),
\end{equation}
for all $\eta>0$.
The \emph{non-critical values} will entail almost uniform distribution of the digits of $\alpha(\varepsilon,\ts)$ in the interval $[b_\ell-4\kappa'-4,b_\ell)$
along $\nLO\in P$,
where
\[ 
\kappa'\eqdef \kappa+\delta=b_\ell-(a_\ell-\delta).
\] 
By another Koksma--Hlawka argument, our discrepancy term $2^{-\eta}$ has to be multiplied by $2^{4\kappa'+4}$ in order to account for all possibilities of digits in $[b_\ell-4\kappa'-4,b_\ell)$.
By Lemma~\ref{lem_odd_elimination}, the non-critical values $(\nOO,\nLO)$ allow for the odd elimination property, for most $\nLO\in P$.
We translate this idea into formulas. For each integer $\omega\ge0$,
and $\mathfrak a\ge0$,
\[
\begin{aligned}\hspace{5cm}&\hspace{-5cm}
\frac1{\lvert P\rvert}
\#\bigl\{
\nLO\in P:
\bigl(6(\nOL2^\zeta+\nOO+r\varepsilon)\nLO2^\tau+\mathfrak a\bigr)^{[b_\ell-4\kappa'-4,b_\ell)}=\omega
\bigr\}
\\&=\frac{1}{2^{4\kappa'+4}}
+\LandauO
\bigl(
D_{\lvert P\rvert}
\bigl(6\bigl(\nOL2^\zeta+\nOO+\varepsilon r\bigr)T(\nOL)2^{\tau-b_\ell}
\bigr)\bigr).
\end{aligned}
\]
An average over $\nO=\nOL2^\zeta+\nOO$, coming from the definition~\eqref{eqn_second_factor_structure} of $\SY$, leads to the error term $A$.
We balance the contributions of the trivial (``critical'') terms, whose number is estimated in~\eqref{eqn_critical_values_upper_bound} and where we take the trivial estimate, and the nontrivial terms.
This leads to a useful choice of $\eta$, such that $2^{-\eta}$ is the square root of the expression~\eqref{eqn_critical_values_upper_bound}, and therefore a total contribution
\begin{equation}\label{eqn_Etwelve_def}
\Etwelve\eqdef
L
2^{4\kappa'+4}
\nu^{1/2}
\max\bigl(1,2^{(b_\ell-\tau-\zeta)/2}\bigr)
\biggl(V^{-1/4}+2^{(\tau-b_\ell)/4}\biggr).
\end{equation}

We see that the lowest window of digits that is cut away has to be larger than the size $\kappa\sim\Xi\nu/100$ of the other $L$ intervals
(see the definition~\eqref{eqn_cutout_length} in Section~\ref{sec_windows_margins}).
This is necessary in order to guarantee
$(b_\ell-\tau)/4>4\kappa'+4\sim16\kappa+4$.
Adding a margin $\kappa$, we see that it is sufficient to require
\begin{equation}\label{eqn_65}
b_{L-1}-\tau\geq 65\kappa
\end{equation}
in order to obtain a nontrivial bound for $\Etwelve$.


\subsubsection{The leftshift lemma}\label{sec_leftshift}
In this short section, we prove a very useful discrepancy result,
which we will use in Sections~\ref{sec_lowest_digits} and~\ref{sec_fourfold} below.

Let $a,b,c$ be integers, where $0\leq a\leq b\leq c$, let $I,I_1$ be nonempty finite intervals in $\mathbb N$, and $(\alpha_j)_{j\in I}$ be a finite sequence of nonnegative integers.
Suppose that we have information about the distribution of the digits of $\alpha_j$ in the interval $[a,c)$.
We are interested in showing \emph{average uniform distribution} of the digits of the sequences
\[\alpha^{(j)}:I_1\rightarrow\mathbb Z,\quad m\mapsto m\alpha_j,\]
with indices in the smaller interval $[b,c)$.
Such a statement forms the core of the \emph{fourfold independence} argument given in Section~\ref{sec_fourfold}, and is presented below.
It is an application of two fundamental inequalities in uniform distribution theory.
\begin{lemma}\label{lem_leftshift}
Assume that
$I,I_1$ are finite nonempty intervals in $\mathbb N$ containing $N$ resp. $M$ integers
and let $(\alpha_j)_{j\in I}$ be a sequence of real numbers.
Assume that $q\ge1$ is an integer.
For each $j\in I$, we define the new sequence
\[\alpha^{(j)}\eqdef \bigl(m\alpha_j/q)_{m\in I_1}.\]
The estimate
\[ 
\frac1N\sum_{j\in I}
D_M\bigl(\alpha^{(j)}\bigr)
\ll
\frac{q\logp M\logp N}M+
\frac 1qD_N(\alpha)^{1/2}.
\] 
holds with an absolute implied constant.
\end{lemma}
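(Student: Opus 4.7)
The plan is to reduce the averaged discrepancy to averaged exponential sums and then exploit the hypothesis on $D_N(\alpha)$ via Koksma--Hlawka. First, I apply Erd\H{o}s--Tur\'an--Koksma (Lemma~\ref{lem_ETK}) to each individual discrepancy, with a free cutoff parameter $H$:
\[
D_M(\alpha^{(j)})\ll\frac{1}{H}+\sum_{h=1}^{H-1}\frac{1}{h}\bigl|S_{hj}\bigr|,
\qquad
S_{hj}\eqdef\frac{1}{M}\sum_{m\in I_1}\e(hm\alpha_j/q),
\]
and average over $j\in I$. The problem reduces to bounding $\frac{1}{N}\sum_{j\in I}|S_{hj}|$ uniformly in $h$.

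To do this, I apply Cauchy--Schwarz across $j$ and expand the square:
\[
\Bigl(\frac{1}{N}\sum_j|S_{hj}|\Bigr)^{2}\le\frac{1}{N}\sum_j|S_{hj}|^2=\frac{1}{M^2}\sum_{|k|<M}(M-|k|)\cdot\frac{1}{N}\sum_{j\in I}\e(hk\alpha_j/q).
\]
The diagonal $k=0$ contributes $1/M$; for $k\ne 0$ the inner average is a Weyl-type exponential sum of $(\alpha_j)_{j\in I}$ at frequency $hk/q$, to which Koksma--Hlawka (Lemma~\ref{lem_Koksma_Hlawka}) applies with $f(x)=\e(hkx/q)$: its integral is $\ll\min(1,q/|hk|)$ (vanishing when $q\mid hk$) and its total variation is $\ll 1+|hk|/q$. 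Summing over $|k|<M$ yields a second-moment bound of the shape
\[
\frac{1}{N}\sum_j|S_{hj}|^2\ll\frac{1}{M}+\frac{q\logp M}{hM}+\frac{hM\,D_N(\alpha)}{q}.
\]
Taking square roots, summing against $1/h$ for $1\le h<H$ (the middle term benefits from convergence of $\sum h^{-3/2}$), and combining with the ETK error $1/H$, one gets an estimate of the form $\frac{1}{H}+\frac{\logp H}{\sqrt M}+\sqrt{\frac{q\logp M}{M}}+\sqrt{\frac{MH\,D_N(\alpha)}{q}}$.

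The hard part is to recover the exact factor $\frac{1}{q}D_N(\alpha)^{1/2}$ rather than the weaker $D_N(\alpha)^{1/2}/\sqrt{q}$ produced by a naive balance of $H$ in the display above. The sharpening should come from applying Koksma--Hlawka directly to the Fej\'er-type kernel $F_h(x)=\bigl|\sum_{m\in I_1}\e(hmx/q)\bigr|^2$ rather than term by term: a direct computation yields $\int_0^1 F_h(x)\,\mathrm dx=M+M^2\gcd(h,q)/q$ (the second summand collecting the $k$ with $q\mid hk$), while $V(F_h)\asymp M^2\max(1,h/q)$. The factor $\gcd(h,q)/q$ in the mean already carries the desired $1/q$ for generic $h$, while the sparse set of $h$ with $\gcd(h,q)>1$ can be handled by a divisor-sum argument in the same spirit as the proof of Lemma~\ref{lem_MU}. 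Splitting the $h$-sum into the two regimes $h<q$ and $h\ge q$, and optimizing $H$ against the sharpened second-moment bound, should yield the claimed estimate with the two logarithmic factors collecting into $\logp M\,\logp N$. The most delicate bookkeeping lies in the transition range $h\asymp q$, and in ensuring that the integral contribution absorbs the correct factor of $q$ without being eaten by the variation term.
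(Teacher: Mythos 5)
Your argument diverges from the paper's at the crucial step, and the divergence is fatal. After Erd\H{o}s--Tur\'an you apply Cauchy--Schwarz in $j$ and pass to the \emph{second} moment $\frac1N\sum_j|S_{hj}|^2$, expanded as a Fej\'er kernel. The diagonal $k=0$ in that expansion contributes $1/M$, hence $M^{-1/2}$ after the square root, and this floor cannot be removed: one has $\int_0^1\bigl|\frac1M\sum_{0\le m<M}\e(mx)\bigr|^2\,\mathrm dx=1/M$ exactly, so for an equidistributed $(\alpha_j)$ the quantity $\bigl(\frac1N\sum_j|S_{hj}|^2\bigr)^{1/2}$ is genuinely of order $M^{-1/2}$, whereas the $L^1$ average $\frac1N\sum_j|S_{hj}|$ is of the much smaller order $(\logp M)/M$. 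Cauchy--Schwarz thus throws away roughly a factor $M^{1/2}/\logp M$. Your displayed bound accordingly contains the terms $(\logp H)M^{-1/2}$ and $(q\logp M/M)^{1/2}$, both of which dwarf the target $q(\logp M)(\logp N)/M$, and your proposed ``sharpening'' via Koksma--Hlawka on the Fej\'er kernel $F_h$ cannot repair this, since the diagonal term $M$ in $\int F_h$ remains. The $1/q$ refinement that occupies most of your attention is a side issue; the $M^{-1/2}$ is what sinks the proof.

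The paper does \emph{not} apply Cauchy--Schwarz. After Erd\H{o}s--Tur\'an it bounds the inner exponential sum \emph{pointwise} by the geometric-series estimate $|S_{hj}|\le\min\bigl(1,\lVert h\alpha_j/q\rVert^{-1}/(2M)\bigr)$ and applies Koksma--Hlawka directly to this $L^1$-type function of $\alpha_j$: its mean over $[0,1]$ is $\ll q(\logp M)/M$ and its total variation is $\ll h/q$ (a string of spikes of height~$1$ spaced $q/h$ apart), yielding $\frac1N\sum_j|S_{hj}|\ll q(\logp M)/M+(h/q)\,D_N(\alpha)$ for each $h$. Summing against $1/h$ and choosing $H\asymp D_N(\alpha)^{-1/2}$ (so that $\logp H\ll\logp N$, since always $D_N(\alpha)\ge1/N$) closes the proof. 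Incidentally, this balancing returns $D_N(\alpha)^{1/2}$ rather than the stated $q^{-1}D_N(\alpha)^{1/2}$, which suggests a harmless typo in the lemma: the paper applies it only with $q=2$, where the factor is absorbed into the implied constant. In any case, that $1/q$ is not where the difficulty of this lemma lies.
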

\begin{proof}
Assume that $H\ge1$ is an integer.
By Erd\H{o}s--Tur\'an we have
\[
\begin{aligned}
\frac 1N
\sum_{j\in I}
D_M\bigl(\alpha^{(j)}\bigr)
&\ll
\frac 1H
+
\frac 1N
\sum_{\substack{h\in \mathbb Z\\0<\lvert h\rvert<H}}
\frac 1h
\sum_{j\in I}
\Biggl\lvert\frac 1M\sum_{m\in I_1}
\e\bigl(hm\alpha_j/q\bigr)
\Biggr\rvert.
\\&\ll
\frac 1H
+
\frac 1N
\sum_{\substack{h\in \mathbb Z\\0<\lvert h\rvert<H}}
\frac 1h
\sum_{j\in I}
\min\biggl(1,\frac1M\bigl\lVert h\alpha_j/q\bigr\rVert^{-1}\biggr).
\end{aligned}
\]
For integers $a\neq 0$, the variation of $x\mapsto \min\bigl(1,\lVert ax\rVert^{-1}/M\bigr)$ on $[0,1)$ is bounded by $\LandauO(a)$,
therefore we deduce by Koksma--Hlawka
\[
\begin{aligned}
\frac 1N
\sum_{j\in I}
D_M\bigl(\alpha^{(j)}\bigr)
&\ll \frac1H+
\sum_{\substack{h\in \mathbb Z\\0<\lvert h\rvert<H}}
\frac 1h
\biggl(
\frac{q\logp M}M+
\frac hqD_N(\alpha)
\biggr)
\ll \frac1H+
\frac{q\logp M\logp H}M+
\frac{H}{q}D_N(\alpha).
\end{aligned}
\]
Choosing a positive integer $H\asymp\bigl(D_N(\alpha)\bigr)^{-1/2}$ finishes the proof.
\end{proof}
\subsubsection{Removing the lowest interval of digits}\label{sec_lowest_digits}
While the intervals $I_0,\ldots,I_{L-1}$ were removed using the odd elimination property, the elimination of the interval
$[a,c)=[a_L,b_L)$ uses the term
\[\bigl(\nOL2^\zeta+\nOO+\varepsilon r\bigr)^2\]
in~\eqref{eqn_Qprime_def}, which we did not take into account before.
We will show that (for most $m$) the digits of
$\nOO\mapsto(\nOO+m)^2$ with indices in $[a-\delta_2,a)$
are uniformly distributed,
where the margin
$\delta_2=c-a+\delta$
is a bit larger than the window $[a,c)$ we want to eliminate, see~\eqref{eqn_cutout_length},~\eqref{eqn_L_def}.
In particular, our slopes $\alpha(\ts,\varepsilon)$ (see~\eqref{eqn_Qprime_def}) will have uniformly distributed digit blocks on $[a-\delta_2,a)$.
We will feed this equidistribution result into Lemma~\ref{lem_leftshift}, and eliminate the smaller window $[a,c)$ above, while generating $\tL$ on position $c$.

We will first study the discrepancy of the sequence
\[x:\nOO\mapsto \frac{(\nOL2^\zeta+\nOO+\varepsilon r)^2}{2^a},\]
using the Erd\H{o}s--Tur\'an inequality with an integer $H_2$ to be chosen later. For $1\leq h<H_2$, consider
\[
T(\nOL,\varepsilon,r,a,h)\eqdef
\frac1{2^\zeta}
\sum_{\nOO\in\JOO}
\e\biggl(h\frac{(\nOL2^\zeta+\nOO+\varepsilon r)^2}{2^a}\biggr).
\]
In order to estimate $T(\nOL,\varepsilon,r,a,h)$, we apply the Kusmin--Landau Theorem~\cite[Theorem~2.1]{GrahamKolesnik1991}.
Set $f(x)=hx^2/2^\tau$.
As $x$ varies in an interval of length $\ll2^\tau$,
the derivative $f'$ is increasing,
and runs through $\LandauO(h)$ intervals $[n,n+1)$ at a constant rate.
Let \[\sigma_{\nOL}:\nOO\mapsto 2h(\nOL2^\zeta+\nOO+\varepsilon r)/2^\tau\]
be the sequence of slopes.
Consequently,
for each $\varepsilon\in\{0,1\}$ and each $r$,
the number of $\nOL\in\JOL$ such that
\[ 
\lVert\sigma_{\nOL}(\nOO)\rVert\leq\gamma
\quad\mbox{for some $\nOO\in\JOO$}
\] 
is bounded by
\begin{align*}
&\ll
h\biggl(1+\frac1{h2^{\zeta-\tau}}\biggl(
\gamma+\frac{\lvert\JOO\rvert}{2^\tau}
\biggr)
\biggr)
\\&\ll
H_2+\gamma\smallspace2^{\tau-\zeta}
\end{align*}
That is, for each $r<R$ we have to exclude a fraction

\[\Ethirteen^{(0)}\ll H_22^{\zeta-\tau}+\gamma\]
of integers $\nOL\in\JOL$, which will constitute the first part of our error $\Etwelve$.
For the remaining $\nOL$, we estimate $T$ by Kusmin--Landau:
\begin{equation}\label{eqn_Kusmin_Landau_good}
T(\nOL,\varepsilon,r,a,h)
\ll
\Ethirteen^{(1)}\eqdef \frac1{\gamma\smallspace2^\zeta}.
\end{equation}
The estimate~\eqref{eqn_Kusmin_Landau_good} directly carries over to a discrepancy estimate for the digits in $[a-\delta_2,a)$ of our four slopes $\alpha(\ts,\varepsilon)$: for most $\nOL$, we have uniform distribution (in $\nOO$) of the digits of our four slopes, with indices in $[a-\delta_2,a)$.
Balancing the two error terms arising above, from the exclusion of certain $\nOL$, and Kusmin--Landau respectively, we arrive at
\[\gamma\eqdef 2^{-\zeta/2}.\]

Next, these slopes are multiplied by $\nLL$, and we are interested in the digits of $\nLL\mapsto \nLL\smallspace\alpha(\ts,\varepsilon)$
\emph{with indices in} $[a-\delta_2,a+1)$.
This is an application of the \emph{leftshift lemma}, where $q=2$,
and we define
\begin{equation}\label{eqn_Ethirteen_def}
\Ethirteen\eqdef
\frac{\nu^2}{2^{\nu-\rho}}
+2^{-\zeta/4}.
\end{equation}
Taking along this error, we may assume that each of the digit block configurations
\[(\nLL\alpha(\ts,\varepsilon))^{[a-\delta_2,a+1)}=\omega,\]
for all four slopes, appear with the fair frequency $2^{-\delta_2-1}$.
We easily get factors $M_{L,j}$ such as in Proposition~\ref{prp_uniform},
including a multiplication by $2^{c-a}$. This is accounted for by the bound $2^{3(c-a)}$ in~\eqref{eqn_lowest_digits_condition}.

\subsubsection{The fourfold independence}\label{sec_fourfold}
We want to apply Proposition~\ref{prp_uniform} in order to bound the sum $\SY$ defined in~\eqref{eqn_second_factor_structure}.
To this end, we show that the actual slopes~\eqref{eqn_Qprime_def}
\[\Qprime(\varepsilon,\ts)
=6\smallspace\nA(\varepsilon)2^\tau\nLO
+\,6\smallspace\ts\smallspace\multiple(\nOL)\smallspace\nA(\varepsilon)2^\tau
+\,3\smallspace\nA(\varepsilon)^2\]
coming from our problem cause the discrepancy error term $\Eeleven$ defined in~\eqref{eqn_fourfold_discrepancy} to be small,
where suitable averages (over $\nOO,\nOL,\nLO,\sO,\sL$) and a supremum (over $P\in\partition(\nOL)$) are involved.
This error measures the deviation from independent uniform distribution of the blocks of digits, with indices in $[c,d)$, of the four slopes
\[ 
\begin{array}{r@{\hspace{1mm}}lr@{\hspace{1mm}}lr@{\hspace{1mm}}lr@{\hspace{1mm}}l}
\alpha_0&=\tilde\alpha_02^{-d},&\alpha_1&=\tilde\alpha_12^{-d},&\alpha_2&=\tilde\alpha_22^{-d},&\alpha_3&\eqdef \tilde\alpha_32^{-d},\quad\mbox{where}\\
\tilde\alpha_0&=\alpha(0,\sO),& \tilde\alpha_1&=\alpha(1,\sO),&\tilde\alpha_2&=\alpha(0,\sL),&\tilde\alpha_3&=\alpha(1,\sL).
\end{array}
\] 
Keep in mind that $\alpha(\varepsilon,\ts)$
also depends on $\nLO,\nOL,\nOO$, and $r$.
When the smallness of the discrepancy $\Eeleven$ is established,
only the first term in~\eqref{eqn_uniform} is relevant,
and therefore the problem of estimating $\SY$
is reduced to bounding a Gowers norm.

In order to account for all discrepancies that appear when the sums $\Seight$
in~\eqref{eqn_second_factor_structure}
are estimated with the help of Proposition~\ref{prp_uniform},
it will be sufficient to estimate
\begin{equation}\label{eqn_average_discrepancy}
\Efourteen\eqdef
2^{4(d-c)}
\frac1{2^{\tau-\zeta}S^2}
\sum_{\substack{\nOL\in\JOL\\0\leq\sO,\sL<S}}
\sup_{P\in\partition(\nOL)}
\frac1{\lvert P\rvert R\,2^\zeta}
\sum_{\substack{\nLO\in P\\\lvert r\rvert<R\\\nOO\in\JOO}}
D_{\lvert\JLL\rvert}\bigl(\alpha_0,\alpha_1,\alpha_2,\alpha_3\bigr),
\end{equation}
which comes from the error $\Eeleven$ defined in~\eqref{eqn_fourfold_discrepancy}.
Note that $c$ and $d$ are the limits of the interval of digits that remain after the elimination of $L+1$ intervals, and we will define them later~\eqref{eqn_L_def}.
Also, we quietly used the argument (employed at various occasions) that exponents $\geq 1$ of error terms can be replaced by $1$, by considering the trivial bounds.
We apply the Erd\H{o}s--Tur\'an--Koksma inequality~\eqref{eqn_ETK},
with
\[N=\lvert \JLL\rvert,\quad \ell_0\eqdef \nOL2^\zeta+\nOO,\quad \ell_1\eqdef r,\]
and an integer parameter
\begin{equation}\label{eqn_H0_choice}
H_1=2^{m_1},\quad\mbox{where}\quad m_1\ge0, 
\end{equation}
to be chosen later.
Note that we may omit $\beta(\varepsilon,\ts)$, since~\eqref{eqn_ETK} does not see rotations modulo $1$.
We obtain
\begin{equation}\label{eqn_fourfold_ETK}
\begin{aligned}
\hspace{1cm}&\hspace{-1cm}
D_N\bigl(\alpha_0,\ldots,\alpha_3\bigr)\ll
\frac 1{H_1}+
\sum_{\substack{h\in \mathbb Z^4\\0<\lVert h\rVert_\infty<H_1}}
\frac 1{\mu(h)}
\left\lvert\frac 1N\sum_{0\leq n<N}
\e\bigl(n\bigl(h_0\alpha_0+\cdots+h_3\alpha_3\bigr)\bigr)
\right\rvert
\\&
\ll
\frac 1{H_1}+
\sum_{\substack{h\in \mathbb Z^4\\0<\lVert h\rVert_\infty<H_1}}
\frac 1{\mu(h)}
\min\biggl(1,\frac1N\biggl\lVert 
\frac{1}{2^d}
\bigl(h_0\alpha(0,\sO)+h_1\alpha(0,\sL)
\\&\hspace{6cm}+h_2\alpha(1,\sO)+h_3\alpha(1,\sL)\bigr)
\biggr\rVert^{-1}\biggr)
\\&\leq
\frac 1{H_1}+
\sum_{\substack{h\in \mathbb Z^4\\0<\lVert h\rVert_\infty<H_1}}
\frac 1{\mu(h)}
\min\biggl(1,\frac1N\bigl\lVert 
A_0+A_1+A_2
\bigr\rVert^{-1}\biggr),
\end{aligned}
\end{equation}
where
\begin{equation*}
\begin{aligned}
A_0(\ell_0,\ell_1,\nLO)&\eqdef
6\cdot2^{\tau-d}
\bigl((h_0+h_1+h_2+h_3)\ell_0+(h_2+h_3)\ell_1\bigr)\nLO,
\\A_1(\ell_0,\ell_1,\sO,\sL)&\eqdef6\cdot2^{\tau-d}\multiple(\nOL)
\bigl((h_0\sO+h_1\sL+h_2\sO+h_3\sL)\ell_0+
(h_2\sO+h_3\sL)\ell_1\bigr),
\\A_2(\ell_0,\ell_1)&\eqdef3\cdot 2^{-d}\bigl((h_0+h_1)\ell_0^2+(h_2+h_3)(\ell_0+\ell_1)^2\bigr).
\end{aligned}
\end{equation*}
Next, we make use of the sums over $\nOO\in\JOO$, affecting $\ell_0$, and $\ell_1=r\in\{-R+1,\ldots,R-1\}$.
In the following, we are concerned with arguments concerning the distribution of the digits of $A_0+A_1+A_2$, with indices in $[c,d)$.
For this, we introduce another margin
\begin{equation}\label{eqn_another_margin}
\delta_1\eqdef32\kappa,
\end{equation}
and we study the larger interval $[c-\delta_1,d)$.
By our definition $c=\tau+64\kappa$ in Section~\ref{sec_windows_margins} we clearly have $\delta_1=\frac{c-\tau}2$.
The argument showing smallness of $\Efourteen$ defined in~\eqref{eqn_average_discrepancy}, which is a certain average over the values $D_N(\alpha_0,\alpha_1,\alpha_2,\alpha_3)$, uses a case distinction into two cases, having two subcases each. The treatments of these four cases are similar to each other, and we will only give a detailed proof in the first case (a).

\paragraph{The case $(h_0\neq-h_1\textsf{ or } h_2\neq-h_3)$.}
(a) Let us first assume that $h_0+h_1+h_2+h_3\ne0$.
In this case, $\nu_2(h_1+h_2+h_3+h_4)\leq m_1+2$,
and we note the important restriction that
\begin{equation}\label{eqn_important_restriction}
m_1+3<c-\tau-\delta_1,
\end{equation}
which we will guarantee later.
Smallness of the sum
\[ 
C_0\eqdef
\frac1{\lvert P\rvert R2^\zeta}
\sum_{\substack{\nLO\in P\\\lvert r\rvert<R\\\nOO\in\JOO}}
\min\biggl(1,\frac1N\bigl\lVert A_0+A_1+A_2\bigr\rVert^{-1}\biggr),
%
\] 
where $N=\lvert\JLL\rvert$, is achieved in two stages.
In the following, we will assume the important estimate 
\begin{equation}\label{eqn_zeta_lower_bound}
\zeta\geq d-\tau.
\end{equation}
Our definitions in Section~\ref{sec_choosing} below will guarantee that~\eqref{eqn_zeta_lower_bound} is indeed satisfied.

First, we consider the sum over $\ell_0=\nOO$.
Recall that the odd number $T$
(oddness is guaranteed by $\Efive$ and $\Esix$)
is the difference of each arithmetic progression $P$ from $\partition(\nOL)$.
As $\ell_0$ runs through an interval of length $2^a$, where $a\geq d-\tau$,
the term
\[\gamma_x(\ell_0)=6T2^{\tau-d}(h_0+h_1+h_2+h_3)\ell_0+x
\]
modulo $1$ is distributed in a very regular manner.
More precisely, noting the properties~\eqref{eqn_zeta_lower_bound} that $\zeta\ge d-\tau$
and~\eqref{eqn_another_margin} $c-\delta_1\geq\tau$, which imply $\zeta\geq d-c+\delta_1$,
it has the property that
\[
\#\biggl\{
\ell_0<2^\zeta:
\gamma_x(\ell_0\bigr)\in
\biggl[\frac{k}{2^{d-c+\delta_1}},\frac{k+1}{2^{d-c+\delta_1}}\biggr)
+\mathbb Z
\biggr\}
=
2^{\zeta-d+c-\delta_1}
\]
for all $x\in\mathbb R$.
We translate this into a discrepancy statement, as a preparation for an application of the \emph{leftshift lemma} (Lemma~\ref{lem_leftshift}).
Using decomposition of an interval in the torus $\mathbb R/\mathbb Z$ into intervals of length $2^{c-d-\delta_1}$, and at most two singular intervals,
we obtain
\[D_{2^\zeta}\bigl(\gamma_x\bigr)\ll2^{c-d-\delta_1},\]
where
\[x\eqdef 6T2^{\tau-d}(h_2+h_3)\ell_1.\]
Subsequently, we can vary $\nLO$ along the arithmetic progression $P$, having odd difference $T$.
This amounts to studying $m\mapsto TA_0(m)+B$, and we can apply
Lemma~\ref{lem_leftshift} in order to obtain
\begin{equation*}
\begin{aligned}
\frac1{2^\zeta}
\sum_{\nOO\in\JOO}
D_{\lvert P\rvert}
\bigl(TA_0(\nOO,r,\cdot)+A_1(\nOO)+A_2(\nOO)\bigr)
&=
\frac1{2^\zeta}
\sum_{\nOO\in\JOO}
D_{\lvert P\rvert}
\bigl(TA_0(\nOO,r,\cdot)\bigr)
\\&\ll
2^{(c-d-\delta_1)/2}
+\frac{\nu^2}{\lvert P\rvert}.
\end{aligned}
\end{equation*}
In particular, for each $K>0$, the sequence
$\pi:P\rightarrow\mathbb R, \nLO\mapsto A_0(\nLO)+A_1+A_2$ has a discrepancy bounded by
\[K2^{(c-d-\delta_1)/2}\]
for at least $2^\zeta\bigl(1-\LandauO(1/K)\bigr)$ integers $\nOO\in\JOO$.
Using Koksma--Hlawka again (the total variation is $\ll1$ as before), we immediately obtain
\[
C_0\ll \frac 1K+K2^{(c-d-\delta_1)/2}
+\frac{\nu^2}{\lvert P\rvert},
\]
in particular,
\begin{equation}\label{eqn_CO_estimate}
C_0\ll 2^{(c-d-\delta_1)/4}
+\frac{\nu^2}{\lvert P\rvert}.
\end{equation}
The calculation above obviously yields a bound that is independent of the intercept of the progression $P$, which eliminates the supremum.
Also, the estimate does not involve $\nOL$, $\sO$, and $\sL$.
These summands appearing in~\eqref{eqn_average_discrepancy} are therefore irrelevant.
Note that we have $C_0$ needs to eliminate an additional factor $2^{4(d-c)}$ --- the number of possible configurations of four windows $[c,d)$ of digits.
The choice~\eqref{eqn_another_margin} is sufficient, as $d-c\leq 2\kappa$ by~\eqref{eqn_L_def}, and we obtain
\begin{equation}\label{eqn_4dcCO_estimate}
2^{4(d-c)}C_0\ll
2^{15(d-c)/4-32\kappa/4}
+2^{4(d-c)}\nu^2/\lvert P\rvert
\ll
2^{-\Xi\nu/200}
+2^{2\Xi\nu/25}\nu^2/\lvert P\rvert.
\end{equation}

Note that $\lvert P\rvert\asymp V$, which is assigned a value later on~\eqref{eqn_parameter_choices}. In particular, this error, leading to $\Efourteen$, will be negligible.

\bigskip\noindent
(b)
If $h_0+h_1+h_2+h_3=0$, we necessarily have $h_2+h_3\ne0$, and we use the sum over $\ell_1=r$ instead of $\ell_0$ as before.
Assume that
\begin{equation}\label{eqn_zeta_R_large_enough}
R\geq 2^{d-\tau},\quad \zeta\geq d-\tau\geq 4(d-c).
\end{equation}
An analogous argument as in (a) above finishes the case.
\paragraph{The case $(h_0=-h_1\textsf{ and } h_2=-h_3)$.}
In this case, the terms $A_0$ and $A_2$ are zero.
We can therefore discard the $\sup$ in~\eqref{eqn_average_discrepancy}.
Again, we split the argument into two cases.
In the first stage of each of the cases, we use the sum over $\ell_0=\nOO$
resp. $\ell_1=r$, followed by the second stage, using $\sO$ and $\sL$.
We have
\[
A_1=6\cdot2^{\tau-d}\multiple(\nOL)
\bigl((h_0+h_2)\ell_0+h_2\ell_1\bigr)(\sL-\sO).
\]

\bigskip\noindent
(c) Assume first that $h_0\neq-h_2$.
In this case, we have
\[\nu_2\bigl(6\multiple(\nOL)(h_0+h_1)\bigr)\leq m_1+2.\]
We use the sum over $\ell_0=\nOO$ first, followed by the sum over $\sL$ (for each given $\sO$).
This summation has a length $S=R$, and this case, too, is finished by an argument as before.

\bigskip\noindent
(d) In the last case, we assume that $h_0=-h_2$. As $\lVert h\rVert_\infty>0$,
this implies $h_2\ne0$, in particular, $\nu_2(h_2)\leq m_1$.
The first stage is accomplished by the variable $\ell_1=r$,
followed by the sum over $\sL$, for each fixed $\sO$, as in case (c).

Summarizing, choosing $H_1=\lfloor2^{-7\Xi\nu/100}\rfloor$ it follows that~\eqref{eqn_important_restriction} is satisfied, and from~\eqref{eqn_average_discrepancy} and~\eqref{eqn_4dcCO_estimate}
we obtain
\begin{equation}\label{eqn_Efourteen_estimate}
\Efourteen
\ll
\nu^6
\bigl(2^{-\Xi\nu/200}
+2^{2\Xi\nu/25}\nu^2/\lvert P\rvert
\bigr),
\end{equation}
where the fourfold sum over $(h_0,\ldots,h_3)$ yields the logarithmic term $(\logp H_1)^4\ll\nu^4$, and the first term in Lemma~\ref{lem_leftshift} contributes a factor $\nu^2$.

\[\ast\quad\ast\quad\ast\]


\section{Tying up loose ends}\label{sec_finishing}
In this section, we put together all the pieces in order to prove Proposition~\ref{prp_sufficient}.

Starting from $\nu$, parameters 
$\lambda,u,\rho,\tau,\xi,\etaO,B,H,L,R,S,\kappa$ have to be chosen such that the conditions~\eqref{eqn_S9_requirements},~\eqref{eqn_additional_requirements0},~\eqref{eqn_additional_requirements1},~\eqref{eqn_additional2},~\eqref{eqn_parameters_stack},~\eqref{eqn_tau_large},~\eqref{eqn_eta_condition} are satisfied.
We will show that our choices of variables
causes the error terms $\Ezero$--$\Efourteen$ to be small.
Moreover, an estimate for the Gowers norm in~\eqref{eqn_uniform} is available~\cite{Konieczny2019,Spiegelhofer2020}.

\subsection{Choosing some free parameters}\label{sec_choosing}
We begin with the choice of an auxiliary quantity,
\begin{equation}\label{eqn_Xi_def}
\Xi\eqdef 1/15000. 
\end{equation}
Regarding this definition, we note that we do not strive to obtain optimal constants in our estimates.
The essential content of our main theorem --- uniform distribution of $\mathsf t(n^3)$ in $\mathbb Z/2\mathbb Z$ with an error term $N^{-c}$ for \emph{some} $c>0$ ---
is not changed by the particular choice of variables.
Starting from $\nu$, we set
\begin{equation}\label{eqn_parameter_choices}
\begin{array}{r@{\hspace{1mm}}lr@{\hspace{1mm}}lr@{\hspace{1mm}}lr@{\hspace{1mm}}lr@{\hspace{1mm}}lr@{\hspace{1mm}}lr@{\hspace{1mm}}l}
\tilde\lambda&\eqdef (2+2\Xi)\nu,&
\tilde\rho&\eqdef(1-2\Xi)\nu,&
\tilde u&\eqdef (2-5\Xi)\nu,
\\
\lambda&\eqdef 3\lfloor\tilde\lambda/3\rfloor,&
\rho&\eqdef\lfloor\tilde\rho\rfloor,&
u&\eqdef\lfloor \tilde u\rfloor,\\[2mm]
\tau&\eqdef\lambda/3,&
\zeta&\eqdef
\bigl\lfloor\lambda\bigl(1/6-1/139\bigr)\bigr\rfloor,&
\omega&\eqdef\bigl\lfloor 3\lambda/139\bigr\rfloor.
\end{array}
\end{equation}
Define $V\eqdef 2^\omega$.
We also choose the ``small values'' $B,H,R_1,R,S$:
\begin{equation}\label{eqn_small_parameter_choices}
\begin{array}{r@{\hspace{1mm}}lr@{\hspace{1mm}}lr@{\hspace{1mm}}lr@{\hspace{1mm}}lr@{\hspace{1mm}}lr@{\hspace{1mm}}lr@{\hspace{1mm}}l}
\tilde B&\eqdef 2^{180\Xi\nu},&
\tilde H&\eqdef 2^{8\Xi\nu},&
\tilde R_1&\eqdef 2^{\Xi\nu/40},& 
\tilde S&\eqdef 2^{17\Xi\nu},
\\
B&\eqdef \lfloor \tilde B\rfloor,&
H&\eqdef2^{\lambda-u}\bigl\lfloor2^{u-\lambda}\tilde H\bigr\rfloor,&
R_1&\eqdef\lfloor\tilde R_1\rfloor,&
R\eqdef S&\eqdef\lfloor\tilde S\rfloor.
\end{array}
\end{equation}
Provided that $\nu$ is greater than some absolute constant,
these choices imply
\[\lambda\ge u\ge\nu\ge\rho\ge\tau\ge\zeta\ge0\]
(see~\eqref{eqn_S9_requirements}), and the validity of
~\eqref{eqn_additional_requirements0},
~\eqref{eqn_additional_requirements1},
~\eqref{eqn_additional2},
~\eqref{eqn_parameters_stack},
~\eqref{eqn_tau_large}.
Note that $R$ is much larger than $R_1$. The first variable is used in our second application of van der Corput's inequality, while the second is used for the iterative reduction of digits.
We need the fact that $R$ is larger than $2^{c-\tau}$ in order to ensure ``fourfold independence'', see Section~\ref{sec_fourfold}.

We choose another parameter $\etaO$, which measures the quality~\eqref{eqn_varepsilon_small_enough} of Diophantine approximation by the factor $T$.
When varying the indices of $\varphi_H(K(\cdot))$ along an arithmetic progression with difference $T$, only small fluctuations of the values should appear.
In particular, the quality $\etaO$ of approximation in~\eqref{eqn_varepsilon_small_enough} has to eliminate factors $H^2SB$ arising in~\eqref{eqn_declass},
in order to guarantee small steps along the geometric sum $\varphi_H$,
see the definition~\eqref{eqn_Efive_def} of $\Efive$.
As we noted in Remark~\ref{rem_after_Etwelve}, we set
\begin{equation}\label{eqn_etaO_choice}
\tilde\etaO\eqdef
4\lambda/139,
\quad
\etaO\eqdef\lfloor \tilde\etaO\rfloor,
\end{equation}
so that
\begin{align*}
2^{\etaO}&\gg H^2SBV2^{\Xi\nu}
\\&\asymp 2^{214\Xi\nu}V.
\end{align*}
This is needed for the smallness of $\Efive$~\eqref{eqn_Efive_def}.
Our choice of $\Xi$ easily guarantees validity of this estimate, which already handles the error term $\Efive$.

Our definition of $\zeta$ implies
\[\lambda-2\tau-\zeta\sim
\lambda/6+\lambda/139
\geq 4\etaO+4\sim 16\lambda/139
\]
for large $\nu$,
therefore~\eqref{eqn_eta_condition} is asymptotically satisfied.
For this condition alone to hold, it would be enough to choose any constant larger than $90$, instead of $139$.
But we also need~\eqref{eqn_TV_restriction} as a preparation for the decomposition of $\JLO$ into arithmetic progressions, stating that $TV\leq\lvert\JLO\rvert$.
Note first that
\[\lvert\JLO\rvert=2^{\rho-\tau}\asymp2^{(1-8\Xi)\nu/3}.\]
Concerning the sizes of $T$ and $V$, consult~\eqref{eqn_varepsilon_small_enough},~\eqref{eqn_parameter_choices}, and the considerations in Remark~\ref{rem_after_Etwelve}. Combining these considerations, we see that any constant larger than $138$ is sufficient in order to guarantee $TV\leq\lvert \JLO\rvert$.
We also set
\begin{equation}\label{eqn_etaL_choice}
\tilde\etaL\eqdef
16\Xi\nu,
\quad
\etaL\eqdef\lfloor \tilde\etaL\rfloor.
\end{equation}
This choice yields $\SX\ll\logp H$ (see~\eqref{eqn_SX_estimate}),
and $\Eseven\ll (SBH\lambda)^22^{\zeta-\tau}$ (see~\eqref{eqn_Eseven_estimate}).

\subsection{Windows and margins}\label{sec_windows_margins}
In order to eliminate windows of digits,
we use the sum over $\nLL$.
The factors $M$ in Lemma~\ref{lem_vdC} have to be assumed to be of size up to $2^{5(\kappa+\delta)+7}$, where $\kappa$ is the width of each intervals to be removed, and $\delta$ is the margin width for each of these intervals.
These factors are multiplied by $R$, and we want to have
\begin{equation}\label{eqn_MR_fits}
MR=o(\JLL)
\quad\mbox{and}\quad
R\ll 2^{\delta(1-\varepsilon)}
\end{equation}
in order to obtain nontrivial results.
Note that the original sum over $n$ has length $\lvert J\rvert=2^\nu$, while $\lvert\JLL\rvert\asymp 2^{2\Xi\nu}$ and $\Xi=1/15000$.
The interval $[\tau,u-\rho)$ of digits remains after the linearization procedure (see~\eqref{eqn_S9_linear3}).
Let us define
\begin{align}
a&\eqdef \tau,\quad b\eqdef u-\rho,\label{eqn_showdown_limits}\\
\kappa&\eqdef\lfloor\Xi\nu/100\rfloor,
\label{eqn_cutout_length}\\
  \delta&\eqdef\lfloor3\Xi\nu/100\rfloor.
\end{align}

Clearly, the requirement~\eqref{eqn_MR_fits} is satisfied for these choices.
In particular, note that $2^\delta$ is larger than $R_1$, where $\delta$ is the margin used in digit block elimination:
we need to guarantee that no carries from below $a_\ell-\delta$ propagate into the interval $[a_\ell,b_\ell)$ we want to eliminate, when adding $r_{\ell,j}M_{\ell,j}\alpha_j$ (see~\eqref{eqn_K1_def}).
On the other hand, $R_1$ has to be larger than $2^{2\kappa}$ in order to be useful for the transition to a Gowers norm.
Note that $2\kappa$ is the maximal size of the remaining interval $[c,d)$: this follows from the definitions
\begin{equation}\label{eqn_L_def}
\begin{aligned}
L&\eqdef \min\bigl\{
\ell\ge0: b-\ell\kappa\leq a+66\kappa
\bigr\},\\
c&\eqdef a+64\kappa,\quad d\eqdef b-L\kappa,\\
\delta_2&\eqdef c-a+\delta=64\kappa+\delta.
\end{aligned}
\end{equation}
With definitions~\eqref{eqn_parameter_choices},~\eqref{eqn_small_parameter_choices},~\eqref{eqn_L_def},
we see that~\eqref{eqn_important_restriction},~\eqref{eqn_zeta_lower_bound}, and~\eqref{eqn_zeta_R_large_enough} are satisfied.
The interval $[c,d)$ that remains may be as large as
$2\kappa$,
while it has a size at least
$\kappa$.
The interval $[\tau,c)$ will be cut out later.
It is $64$ times as big as the $L$ regular intervals that we eliminated.
The reason for this discrepancy lies in \emph{odd elimination}, which we have to guarantee, and which requires enough digits below the $L$ regular intervals (see the final part of Section~\ref{sec_odd_elimination}).
\begin{remark}
\begin{enumerate}
\item
Note that $b-a=u-\rho-\tau\sim(1/3-11\Xi/3)\nu$, while the eliminated intervals have size $\sim \Xi\nu/100$.
It follows that the number $L+1$ of ``slices'' will be around half a million for $\nu$ large enough.
This number can be lowered significantly by a more detailed study, but the fact that we need to cut off multiple times will not be altered by these considerations.
Consequently, the structure of our estimate in the main theorem --- a term of the form $N^{1-c}$ --- will remain the same, as we noted above.
Meanwhile, the gain $c$ is \emph{halved} in each of the $500000$ applications of the Cauchy--Schwarz inequality (compare~\eqref{eqn_vdC_iterated}).
\item
In the treatment of the complete Gelfond conjecture there is an essential difficulty left: cutting off digits multiple times is possible when the argument of the sum-of-digits function is linear, but we do not see a way for polynomials of higher degree.
Moreover, trying to repeat the presented argument for $n^4$,
we see that quadratic polynomials remain, not only as arguments of $\digitsum$, but also in the exponentials.
Dirichlet approximation, as used in the linear case for finding $\multiple(\nOL)$ and $T$, is not sufficient here.
\end{enumerate}
\end{remark}
\subsection{Applying a Gowers uniformity norm estimate}\label{sec_gowers_applied}
The following statement was essentially given by Konieczny~\cite{Konieczny2019},
see~\cite[Proposition~3.3]{Spiegelhofer2020} for this version.
\begin{lemma}\label{lem_gowers}
Let $k\geq 2$ be an integer.
There exist real numbers $\eta>0$ and $C$ such that
\[ 
\frac 1{2^{(k+1)\rho}}
\lVert
\digitsum
\rVert^{2^k}_{U^k(\mathbb Z/2^\rho\mathbb Z)}
\leq C 2^{-\rho\eta}
\] 
for all $\rho\geq 0$,
where the expression on the left hand side is defined in~\eqref{eqn_Unorm_def}.
\end{lemma}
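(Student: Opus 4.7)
The plan is to recognize the left-hand side as (up to the standard normalization) the $2^k$-th power of the Gowers $U^k$-uniformity norm of the Thue--Morse sequence on the cyclic group $\mathbb Z/2^\rho\mathbb Z$, and then to invoke directly the main result of Konieczny~\cite{Konieczny2019}.

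First I would translate the expression in~\eqref{eqn_Unorm_def}. Using the identity $\e\bigl(\tfrac12\digitsum(m)\bigr)=(-1)^{\mathsf t(m)}$, valid for every nonnegative integer $m$, each factor in the product over $\varepsilon\in\{0,1\}^k$ becomes $(-1)^{\mathsf t^{[0,\rho)}(n+\varepsilon\cdot r)}$, where $\mathsf t^{[0,\rho)}$ is the $2^\rho$-periodic extension of the restriction of $\mathsf t$ to $\{0,\ldots,2^\rho-1\}$. Consequently, dividing the sum in~\eqref{eqn_Unorm_def} by $2^{(k+1)\rho}$ yields precisely the standard normalized Gowers $U^k$-norm (raised to the power $2^k$) of the $\pm1$-valued function $(-1)^{\mathsf t^{[0,\rho)}(\cdot)}$ on $\mathbb Z/2^\rho\mathbb Z$. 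Since $r$ ranges over all of $\{0,\ldots,2^\rho-1\}^k$ and $n$ over $\{0,\ldots,2^\rho-1\}$, this identification is exact (no boundary terms arise, since the $2^\rho$-periodic truncated digit sum is literally the sequence on $\mathbb Z/2^\rho\mathbb Z$).

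Second I would quote the Gowers bound. Konieczny's theorem~\cite{Konieczny2019} asserts precisely that the Thue--Morse sequence is Gowers uniform of every order: for each $k\ge2$ there exist $\eta>0$ and $C$ such that the normalized $U^k$-norm of $(-1)^{\mathsf t}$ on an interval (or cyclic group) of cardinality $N$ is bounded by $CN^{-\eta/2^k}$. Applied to $N=2^\rho$ and raised to the $2^k$-th power, this yields the desired inequality. A minor technicality is that Konieczny's original statement is phrased for intervals, not cyclic groups; one reconciles the two either by the standard argument that the Gowers norm over $\mathbb Z/N\mathbb Z$ is controlled by the one over an interval of length~$N$ (up to constants depending on $k$), or by appealing directly to the version for cyclic groups stated as Proposition~3.3 in~\cite{Spiegelhofer2020}, whose normalization already matches the one in~\eqref{eqn_Unorm_def}.

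The main obstacle is not the present reduction, which is essentially bookkeeping, but the proof of Konieczny's bound itself. That argument proceeds by induction on $k$, using the self-similarity relations $\mathsf t(2n)=\mathsf t(n)$ and $\mathsf t(2n+1)=1-\mathsf t(n)$ to relate the $U^k$-norm at scale $\rho$ to a weighted sum of correlations at scale $\rho-1$, and extracting a genuine polynomial saving from the combinatorial structure of the $2^k$-fold product; the delicate point is to arrange the induction so that the gain does not deteriorate with $k$, which requires a careful handling of the carry pattern in $n+\varepsilon\cdot r$. An alternative, conceptually cleaner route would be to invoke the more general Gowers-norm estimate for arbitrary automatic sequences due to Byszewski, Konieczny, and Müllner~\cite{ByszewskiKoniecznyMuellner2020}, which specializes to the Thue--Morse case and gives the same conclusion.
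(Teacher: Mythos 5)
Your proposal is correct and matches the paper's treatment: the paper does not prove Lemma~\ref{lem_gowers} from scratch but simply cites Konieczny~\cite{Konieczny2019} and points to~\cite[Proposition~3.3]{Spiegelhofer2020} for the exact cyclic-group normalization, which is precisely the reduction and citation you lay out. Your extra remarks on the induction underlying Konieczny's bound and on the Byszewski--Konieczny--M\"ullner alternative are consistent with, though not required by, the paper's one-line justification.
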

This term appears in
our upper bound for $\SY$ in Proposition~\ref{prp_uniform}, see~\eqref{eqn_uniform},~\eqref{eqn_Unorm_def}.
Bounding the remaining four errors $\Eeight$--$\Eeleven$ appearing in that proposition will be performed in Section~\ref{sec_finishing_1} below.

\subsection{Finishing the proof of Proposition~\ref{prp_sufficient}}\label{sec_finishing_1}
We combine the estimate $\bigl\lVert\Szero(\nu,\xi)\bigr\rvert^4\leq\Snine+\LandauO\bigl(\Ezero+\Eone+\Etwo+\Ethree+\Efour\bigr)$ from 
Corollary~\ref{cor_linearize} with the estimate $\lvert \Snine\rvert\ll \SX+\SY+\Efive+\Esix$ from Proposition~\ref{prp_glycerol}.
The term $\SX$ is treated in Section~\ref{sec_losing_only_a_logarithm}, and yield only a logarithmic factor, provided that $\Geta$ is chosen in a reasonable way (see~\eqref{eqn_GetaO_def},~\eqref{eqn_mathcalGetaL_def}, $\Geta=\GetaO\cap\GetaL$).
The new main term $\SY$ is handled in Section~\ref{sec_uniform}:
after many iterations, say $k$, of van der Corput's inequality, we can estimate the remaining expression by a Gowers norm estimate, yielding some positive exponent $\eta$.
This exponent has to be reduced by a factor $2^k$, coming from that many applications of the Cauchy--Schwarz inequality. The resulting factor $N^{-\eta/2^k}$ easily swallows the logarithm coming from $\SX$, despite the exponent coming from our method is rather small.

It remains to show that our choices of variables
causes each of the error terms to be negligible.


\subsubsection{The term $\Ezero$: trigonometric approximation error}
\label{sec_Ezero_estimate}
We are going to investigate the error $E_0$, which is the first error term that we collected along the way, and which arose from approximation by trigonometric polynomials.
\begin{lemma}\label{lem_Ezero_estimate}
Let 
$E_0(\nu,\lambda,\mu,H)$ be defined by~\eqref{eqn_Ezero_def}.
For some absolute implied constant, we have
\[ 
E_0\bigl(\nu,u,\lambda,H\bigr)
\ll
\frac{2^\kappa}H
+2^{\lambda/3-\nu}
+H^{3/2}2^{(\nu-\lambda)/2},
\] 
for all $\nu,u,\lambda,H$ satisfying
$\nu\ge1$, $u\in\{0,\ldots,2\nu\}$, $\lambda\ge2\nu$, $H\geq 1$, and $2^\kappa\mid H$.
\end{lemma}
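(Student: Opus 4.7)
The plan is to isolate the $h=0$ contribution, which equals exactly the first summand $2^\kappa/H$ (with $\kappa=\lambda-u$), and then bound the remaining cubic exponential sums by Weyl's method. For $h\ne 0$, write $h=2^\kappa k$ with $0<|k|<K\eqdef H/2^\kappa$; the phase simplifies to $kn^3/2^u$ and the task reduces to bounding an average of $|S_k|/N$ (with $N=2^\nu$), where $S_k\eqdef\sum_{n<N}\e(kn^3/2^u)$.

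For each nonzero $k$, I would apply Lemma~\ref{lem_vdC} twice in succession to $S_k$, with differencing parameters $R$ and $S$. The first step replaces the cubic phase by the quadratic phase $k(3rn^2+3r^2n+r^3)/2^u$, and the second step replaces this by the linear phase of slope $6krs/2^u$, whose partial sum in $n$ is bounded by $\min(N,\|6krs/2^u\|^{-1})$. The combined estimate has the form
\[
|S_k|^4\;\ll\;\frac{N^4}{RS}\;+\;\frac{N^3}{RS}\sum_{0<r<R}\sum_{0<s<S}\min\!\bigl(N,\;\|6krs/2^u\|^{-1}\bigr).
\]

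I would then average this fourth-power bound over $|k|<K$, extract a fourth root by H\"older's inequality, and optimize $R,S$ (both constrained to $\le N$). The diagonal term $N^4/(RS)$ produces, after the fourth root and the prefactor $2^\kappa/H$, the claimed $2^{\lambda/3-\nu}$ when $R,S$ are balanced at the natural cubic scale. For the off-diagonal contribution, the count of triples $(k,r,s)$ with $\|6krs/2^u\|$ small is organized by grouping $k$ according to the $2$-adic valuation $\nu_2(k)$ and using the divisor bound on $6rs$ together with a Dirichlet-type estimate for the progression $\{6krs \bmod 2^u\}$; this yields the remaining term $H^{3/2}2^{(\nu-\lambda)/2}$. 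The main obstacle is ensuring the exponents $\lambda/3$ and $(\nu-\lambda)/2$ emerge precisely from this optimization, since the correct balance between the range $K$ of the outer $k$-sum, the $2$-adic denominators $q_k=2^{u-\nu_2(k)}$ appearing in $6krs/2^u$, and the inner length $N$ must be chosen carefully; getting this bookkeeping right across the regimes $u\le\nu$ and $u>\nu$ is the bulk of the work.
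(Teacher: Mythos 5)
Your strategy is genuinely different from the paper's and, as written, has a gap that sits exactly where the mathematical content lies. The paper does not use Weyl differencing (Lemma~\ref{lem_vdC}) for this lemma at all: it applies the one-shot second-derivative exponential-sum estimate (van der Corput's theorem, in the form of Graham--Kolesnik, Theorem~2.2) to dyadic blocks $\sum_{M\le n<2M}\e(hn^3/2^\lambda)$, obtaining the bound $M(|h|M/2^\lambda)^{1/2}+(2^\lambda/M)^{1/2}$, then treats the short initial segment $n<2^{\lambda/3}$ trivially and sums the dyadic scales. The cutoff $2^{\lambda/3}$ is precisely where the second-derivative bound becomes as large as the trivial bound, and that is the source of the $2^{\lambda/3-\nu}$ term. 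Your proposal instead applies Lemma~\ref{lem_vdC} twice, arriving at sums of $\min\bigl(N,\lVert 6krs/2^u\rVert^{-1}\bigr)$, and then needs a Diophantine count of triples $(k,r,s)$ with $6krs$ small modulo $2^u$. This is a plausible plan, but you have explicitly left the decisive step --- verifying that the count, after averaging over $k$, applying H\"older, and taking a fourth root, actually yields $H^{3/2}2^{(\nu-\lambda)/2}$ --- as ``the bulk of the work.'' That is precisely the part that must be carried out for the proposal to be a proof; without it, the argument is an outline rather than a verification.

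Two smaller remarks. First, your isolation of the $h=0$ summand as exactly $2^\kappa/H$ is correct. Second, the diagonal term $N^4/(RS)$ will not ``produce'' $2^{\lambda/3-\nu}$: with $R,S\le N$ it is at best $2^{-\nu/2}$ after the fourth root, which happens to satisfy $2^{-\nu/2}\le 2^{\lambda/3-\nu}$ whenever $\lambda\ge 3\nu/2$, so under the hypothesis $\lambda\ge 2\nu$ it is merely \emph{dominated by} the claimed bound rather than being its origin. In the paper the $2^{\lambda/3}$ contribution arises from the low dyadic scales, which your scheme handles only implicitly --- through $\min(N,\cdot)$ being trivial when $6krs/2^u$ is tiny --- and it is in exactly this regime (note $u$ can be as large as $2\nu$, so the cubic phase has very small slope on most of $[0,2^\nu)$) that the counting you postponed is delicate and where a careless bound would fail to match the claimed exponents.
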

\begin{proof}
The estimate of $E_0$ will involve dyadic intervals.
Let $\lambda\ge0$, $h\in\mathbb Z$, $M\ge1$, and set
\[F(\lambda,h,M)\eqdef\sum_{M\leq n<2M}\e\bigl(hn^32^{-\lambda}\bigr).
\]
We are going to find a nontrivial estimate for $E_0$ provided that $h\ne0$.
Let us apply van der Corput's theorem~\cite[Theorem~2.2]{GrahamKolesnik1991}.
Set $f(x)=hx^3/2^\lambda$, then $f''(m)\asymp hM2^{-\lambda}$ for $m\in[M,2M)$, therefore
\[F(\lambda,h,M)\ll M\sqrt{hM}2^{-\lambda/2}+2^{\lambda/2}/\sqrt{M}\]
with some absolute implied constant.
Choose $\ell\eqdef \lambda/3$. 
Dyadic decomposition of $[2^\ell,2^\nu)$ yields
\[ 
\begin{aligned}
\sum_{n\in J}\e\bigl(hn^32^{-\lambda}\bigr)
&=2^\ell+
\sum_{\ell\leq j<\nu}
F\bigl(\lambda,h,2^j\bigr).
\\&\ll
2^\ell+
\Bigl(2^{3\nu}H/2^\lambda\Bigr)^{1/2}
+\Bigl(2^{\lambda}2^{-\ell}\Bigr)^{1/2}.
\\&=2\cdot2^{\lambda/3}+
\Bigl(2^{3\nu}H/2^\lambda\Bigr)^{1/2}.
\end{aligned}
\] 
Treating the term corresponding $h=0$ separately,
and taking together $h$ and $-h$,
the statement of the lemma follows.
\end{proof}
The three summands in Lemma~\ref{lem_Ezero_estimate}
are bounded by $2^{-\Xi\nu}$ by our choice of variables.
\subsubsection{The term~$\Eone$: minor modification of summation limits}
\label{sec_Eone_estimate}
The error term $\Eone=SBH2^{-(\nu-\tau)}$
arises when the conditions $n+\sO\in \JL$, $n+\sL\in\JL$ coming from the van der Corput inequality are discarded (see~\eqref{eqn_S3_S5}).
We easily see that $\Eone\ll2^{-\Xi\nu}$.
\subsubsection{The term~$\Etwo$: choosing the factors $\multiple(\nOL)$}
\label{sec_Etwo_estimate}
We need to find odd factors $\multiple(\nOL)$
eliminating some digits of $\nO$ directly below $\lambda-\tau-\rho$.
This is needed in order to remove the difference terms coming from summation by parts, see~\eqref{eqn_S6_split}.
We need to cancel the contribution $SH^22^{\nu-\rho}\asymp 2^{35\Xi\nu}$,
using an odd factor $\multiple(\nOL)$ of size $\leq B$.
We therefore aim for a size $\ll 2^{-36\Xi\nu}$ of the $\lVert\cdot\rVert$-expression in~\eqref{eqn_E2_def}.
Considering Lemma~\ref{lem_odd_elimination}, we need to admit $B\gg 2^{5\cdot 36\Xi\nu}$. This explains the choice $B\asymp 2^{180\Xi\nu}$.

Moreover, the window of digits to be eliminated needs a margin below that is about three times the size of the interval.
This leads to the condition
\[4\cdot36\Xi\nu\leq\lambda-\tau-\rho
\asymp (1+10\Xi)\nu/3,\]
which is satisfied since $3\cdot4\cdot 36\Xi<1$.
Lemma~\ref{lem_odd_elimination} is applicable, and we only need to exclude a proportion $\sim 2^{-36\Xi\nu}$ of integers $\nO\in\JO$.
(Also note that the sum over $\nO\in\JO$, of length $2^\tau$, is longer than $2^{\lambda-\tau-\rho}$, and we trivially obtain uniform distribution of the digits of $\nO$ in the interval to be eliminated.)

We note that the factor $T$ introduced in Section~\ref{sec_uncoupling} (see the definition~\eqref{eqn_Efive_def} of $\Efive$) eliminates digits of the same number $\nO$, at a different index.
An analogous argument is applicable for this case, and we will comment on this in Section~\ref{sec_Efivesix_estimate}.
below.
\subsubsection{The term~$\Ethree$: removing the $\zeta$ least significant digits of $\nO$} 
\label{sec_Ethree_estimate}
The error term $\Ethree$ captures the effect of the lowest $\zeta$ digits of $\nO$ on the position at which $\varphi_H$ has to be evaluated, see~\eqref{eqn_K_def} and~\eqref{eqn_Kprime_def}.
Multiplying this by the total variation $\asymp H^2$, we obtain a bound for the introduced error.
Considering our definitions and~\eqref{eqn_E3_def}, we have
\[\Ethree\ll2^{A\nu}+2^{B\nu},\]
where
\begin{align*}
\begin{array}{l@{\hspace{1mm}}l@{\hspace{1mm}}l}
A&=16\Xi+(1/3+\Xi/3-2/139)+(180+17)\Xi-(1+10\Xi)/3&\leq -\Xi,\\
B&=16\Xi+(1/3+\Xi/3-2/139)+(360+34)\Xi-(2+2\Xi)/3&\leq -\Xi.
\end{array}
\end{align*}
For these inequalities to hold, we also need a quite small value for $\Xi$.
This is taken care of by the choice~\eqref{eqn_Xi_def}.

\subsubsection{The term~$\Efour$: applying the Carry Lemma}
\label{sec_Efour_estimate}
The error term $\Efour$ is the reason for which we have to choose $\lambda$ strictly larger than $2\nu$.
It mainly concerns the error coming from the omission of the digits with indices $\geq\lambda$, justified by van der Corput's inequality.
This term requires a closer look, and we discussed it in Section~\ref{sec_prp_linearize_finishing}, which completed the proof of Proposition~\ref{prp_linearize}
We get a gain $\Efour\ll 2^{-c\nu}$, for some $c>0$, also in this case.
\subsubsection{The terms~$\Efive$ and~$\Esix$: choosing the factor $T$}
\label{sec_Efivesix_estimate}
We eliminate digits of $\nOL$, with indices in a different interval (compare to $\Etwo$, which handles digits directly below $\lambda-\tau-\rho$).
This time we are concerned with the digits directly below $\lambda-2\tau-\zeta$.
Error terms $\Efive$ and $\Esix$ come from Proposition~\ref{prp_glycerol}.
The term $\Efive$ accounts for Dirichlet approximation by $T$.
In order to complete the treatment of $\Efive$, it is sufficient to take Remark~\ref{rem_after_Etwelve} into account: allowing $T$ to be of size around $2^{20\lambda/139}$, the error term is guaranteed to be small.
The term $\Esix$ takes care of the exceptional parameters $\sO$, $\sL$, and $\nOL$, where concentration using an odd $T$, leading to~\eqref{eqn_declass}, is not (easily) possible.
In order to bound~\eqref{eqn_Esix_estimate} nontrivially, it is sufficient to consider the sizes of $H$, $S$, and $\etaO$:
noting~\eqref{eqn_small_parameter_choices} and~\eqref{eqn_etaO_choice}, 
there is nothing to be worried about.


\subsubsection{The term~$\Eseven$: ensuring representative sampling of the geometric sum}
\label{sec_Eeeight_estimate}
We need to exclude some more $\nOL\in\JOL$, leaving only $\nOL\in\bigl(\JOL\setminus\GetaO\bigr)\setminus\GetaL$.
For these exceptional $\nOL\in\GetaL$, uniform distribution of sufficient quality is not guaranteed.
By~\eqref{eqn_Eseven_estimate}, the choice~\eqref{eqn_etaL_choice}, and $3\tau=\lambda$, we have
$\Eseven\ll (SBH\lambda)^22^{\zeta-\tau}$.
The ``large'' value $2^{\tau-\zeta}$ easily eliminates the ``small'' values $S$, $B$, and $H$, since $\Xi$ was chosen sufficiently small.
An estimate $\Eseven\ll2^{-\Xi\nu}$ follows.

%

\subsubsection{The term~$\Eeight$: adjusting summation limits} 
\label{sec_Eeight_estimate}
Note that in Proposition~\ref{prp_uniform}, the index set $\{0,\ldots,2^\mu-1\}$ takes the role of $\JLL$, see the definition~\eqref{eqn_Eeight_def}.
We wish to omit the marginal intervals in van der Corput's inequality, that is, the additional conditions under the inner summation sign.
For this, we need to take care that the sum over $\JLL$
is still larger than the van der Corput variable $R_1$, multiplied by our odd interval deletion coefficients $M_{\ell,j}$.
Note that $\lvert \JLL\rvert=2^{\nu-\rho}\asymp2^{2\Xi\nu}$ and $R_1\asymp 2^{\Xi 3\nu/100}$.
We commented on this case just before the definitions of $\kappa$ and $\delta$ (see~\eqref{eqn_cutout_length}, which takes care of the case $1\leq\ell<L$.
For $\ell=L$, we have 
\[R_1M_{L,j}/2^\mu\leq 2^{\Xi\nu/40+3(c-a)-2\Xi\nu}
\ll 2^{(5/2+192)\kappa-2\Xi\nu}\ll 2^{-\Xi\nu/20}\]
(note the size restriction $M_{L,j}<2^{3(c-a)}$~\eqref{eqn_lowest_digits_condition}),
and we see that $\Eeight\ll R_1^{-1}\asymp2^{-\Xi\nu/40}$.
\subsubsection{The term~$\Enine$: avoiding long carry propagation}
\label{sec_Enine_estimate}
We need to handle carry overflow in the margins
\[[a_\ell-\delta,a_\ell)\]
of the $L$ first intervals $I_\ell=[a_\ell,b_\ell)$ to be cut out.
More precisely, the error term $\Enine$ defined in~\eqref{eqn_Enine_def},
\[\Enine=2^{-\delta}+2^{4(L+1)}LD_{2^\mu}\bigl(\alpha_j2^{-a_\ell}\bigr)\]
has to be estimated \emph{on average},
where the slopes $\alpha_j$ are essentially given by\[\nLO\mapsto\frac{6(\nO+\varepsilon r)2^\tau\nLO}{2^{a_\ell}},\]
and $\nLO$ varies in an arithmetic progression $P$~\eqref{eqn_essential_slope}.
This average is treated in an manner analogous to the ``odd elimination property'' established in Section~\ref{sec_odd_elimination}, see Lemma~\ref{lem_MU},
yielding the same bound $2^{-c\Xi\nu}$ as for $\Etwelve$.
\subsubsection{The term~$\Eten$: towards a Gowers norm}
This is an ancillary error term, taking care of the replacement of the index set $\{1,\ldots,R_1-1\}$ by a dyadic interval.
We obtain full sums over the period of our periodic function, leading directly to a Gowers norm.
The property~\eqref{eqn_L_def} combined with $R_1\asymp2^{(5/2)\kappa}$ guarantees that $R_1$ is larger than the number $2^{d-c}$ of digit combinations in the remaining interval, by a factor $2^{\kappa/2}$.
\label{sec_Eten_estimate}
\subsubsection{The term~$\Eeleven$: a general (fourfold) discrepancy}
\label{sec_Eeleven_estimate}
This term is not estimated independently.
We will only be interested in a certain mean value~\eqref{eqn_average_discrepancy}, which is the term $\Efourteen$.
\subsubsection{The term~$\Etwelve$: digit blocks of the four slopes are uniformly distributed}
\label{sec_Etwelve_estimate}
The term~$\Etwelve$ simultaneously takes care of the \emph{odd elimination property}~\eqref{eqn_odd_elimination},
and \emph{carry overflows}~\eqref{eqn_many_APs} on our $L$ intervals to be eliminated.
In order to establish smallness of the error term~\eqref{eqn_Etwelve_def},
we finally need the definition $V=\lfloor 2^{3\lambda/139}\rfloor$ (see~\eqref{eqn_parameter_choices}),
and we easily get a bound $\Etwelve\ll2^{-c\nu}$ for some $c>0$.

\subsubsection{The term~$\Ethirteen$: eliminating the lowest digit block}
\label{sec_Ethirteen_estimate}
The error~$\Ethirteen$ defined in~\eqref{eqn_Ethirteen_def}
clearly has the property~$\Ethirteen\ll2^{-\Xi\nu}$ for large $\nu$.
\subsubsection{The term~$\Efourteen$: the fourfold independence}
\label{sec_Etsixteen_estimate}
When a certain average of $\Eeleven$
in $\alpha_0,\alpha_1,\alpha_2,\alpha_3$ is formed,
we obtain $\Efourteen$~\eqref{eqn_average_discrepancy}.
An estimate for this expression (the ``fourfold independence'' result) was given in Section~\ref{sec_fourfold}.
The second term in~\eqref{eqn_Efourteen_estimate}
will be small by our choice $V\asymp2^{3\lambda/139}$ above.
The total contribution of the sum $\Efourteen$ can therefore be estimated by $2^{-c\Xi\nu}$ for some $c>0$.

\bigskip
\begin{center}
\definecolor {gr1}{rgb}{0.7,0.7,0.7}
\newcommand{\mul}{5}
\begin{tikzpicture}[xscale=0.2,yscale=0.2,anchor=base]
\node (a000) at ({(0+\eps)*\mul},{(0+\eps)*\mul}) {$\cube$};
\node (a001) at (\mul,0) {$\cube$};
\node (a010) at (0,\mul) {$\cube$};
\node (a011) at ({(1-\eps)*\mul},{(1-\eps)*\mul}) {$\cube$};
\node[color=gr1] (a100) at ({(0+\eps+\deptx)*\mul},{(0+\eps+\depty)*\mul}) {$\cube$};
\node[color=gr1] (a101) at ({(1+\deptx)*\mul},\depty*\mul) {$\cube$};
\node[color=gr1] (a110) at ({(0+\deptx)*\mul},{(1+\depty)*\mul}) {$\cube$};
\node[color=gr1] (a111) at ({(1-\eps+\deptx)*\mul},{(1-\eps+\depty)*\mul}) {$\cube$};
\end{tikzpicture}
\end{center}

\section*{Final remarks}
\begin{enumerate}
\item
The attentive reader
\footnote{\url{http://www.ma.rhul.ac.uk/~uvah099/Sat/reader.html}}
will have noticed that in our arguments, the Thue--Morse sequence does not play a special role among the sequences $\digitsum_q\bmod m$ satisfying $\gcd(m,q-1)=1$.
We are confident that, at the cost of adding arguments involving divisors of $q$ and $m$ and appropriate residues classes at numerous places,
our method is sufficiently strong to handle the general case $\digitsum_q\bigl(n^3\bigr)\bmod m$.
We decided to avoid these additional complications and decided in favour of a more concise presentation.
The same line of thought applies to the generalization to arbitrary polynomials $P$ of degree $3$ such that $P(\mathbb N)\subseteq\mathbb N$.
In order to keep the key arguments clear, we only proved the ``base case'' of the ``degree-$3$ case'' of Gelfond's third problem.
\item
The real difficulty that remains is the generalization to polynomials of arbitrary degree --- the complete solution to Gelfond's third problem.
This problem is of a different kind and certainly requires major new ideas.

\end{enumerate}

\section*{Acknowledgements}
We thank Thomas Stoll and J\"org Thuswaldner for valuable discussions on the digits of cubes.
Special thanks to Michael Drmota, who suggested that the obvious decomposition
$n=q^\tau n_1+n_0$
could in fact be a useful approach to the sum of digits of cubes.
This idea started our research on the question answered in the present paper.
%
\bibliographystyle{siam}
\bibliography{gelfond}

\def\cprime{$'$}
\begin{thebibliography}{10}

\bibitem{AlloucheShallit1999}
{\sc J.-P. Allouche and J.~Shallit}, {\em The ubiquitous
  {P}rouhet-{T}hue-{M}orse sequence}, in Sequences and their applications
  ({S}ingapore, 1998), Springer Ser. Discrete Math. Theor. Comput. Sci.,
  Springer, London, 1999, pp.~1--16.

\bibitem{AlloucheShallit2003}
\leavevmode\vrule height 2pt depth -1.6pt width 23pt, {\em Automatic
  sequences}, Cambridge University Press, Cambridge, 2003.
\newblock Theory, applications, generalizations.

\bibitem{Besineau1972}
{\sc J.~B\'esineau}, {\em Ind\'ependance statistique d'ensembles li\'es \`a la
  fonction ``somme des chiffres''}, Acta Arith., 20 (1972), pp.~401--416.

\bibitem{ByszewskiKoniecznyMuellner2020}
{\sc J.~Byszewski, J.~Konieczny, and C.~Müllner}, {\em Gowers norms for
  automatic sequences}, 2020.

\bibitem{DartygeTenenbaum2006}
{\sc C.~Dartyge and G.~Tenenbaum}, {\em Congruences of sums of digits of
  polynomial values}, Bull. Lond. Math. Soc., 38 (2006), pp.~61--69.

\bibitem{DrmotaMauduitRivat2011}
{\sc M.~Drmota, C.~Mauduit, and J.~Rivat}, {\em The sum-of-digits function of
  polynomial sequences}, J. Lond. Math. Soc. (2), 84 (2011), pp.~81--102.

\bibitem{DrmotaMauduitRivat2019}
{\sc M.~Drmota, C.~Mauduit, and J.~Rivat}, {\em Normality along squares}, J.
  Eur. Math. Soc. (JEMS), 21 (2019), pp.~507--548.

\bibitem{DrmotaMuellnerSpiegelhofer2021}
{\sc M.~Drmota, C.~Müllner, and L.~Spiegelhofer}, {\em Primes as sums of
  {F}ibonacci numbers}, 2021.
\newblock 135 pages. Accepted for publication in {M}em. {A}mer. {M}ath. {S}oc.
  (2022).

\bibitem{Gelfond1968}
{\sc A.~O. Gel{\cprime}fond}, {\em Sur les nombres qui ont des propri\'et\'es
  additives et multiplicatives donn\'ees}, Acta Arith., 13 (1967/1968),
  pp.~259--265.

\bibitem{GrahamKolesnik1991}
{\sc S.~W. Graham and G.~Kolesnik}, {\em van der {C}orput's method of
  exponential sums}, vol.~126 of London Mathematical Society Lecture Note
  Series, Cambridge University Press, Cambridge, 1991.

\bibitem{Granville1992}
{\sc A.~Granville}, {\em Zaphod {B}eeblebrox's brain and the fifty-ninth row of
  {P}ascal's triangle}, Amer. Math. Monthly, 99 (1992), pp.~318--331.

\bibitem{Kim1999}
{\sc D.-H. Kim}, {\em On the joint distribution of {$q$}-additive functions in
  residue classes}, J. Number Theory, 74 (1999), pp.~307--336.

\bibitem{Konieczny2019}
{\sc J.~Konieczny}, {\em Gowers norms for the {T}hue-{M}orse and
  {R}udin-{S}hapiro sequences}, Ann. Inst. Fourier (Grenoble), 69 (2019),
  pp.~1897--1913.

\bibitem{Kummer1852}
{\sc E.~E. Kummer}, {\em {\"Uber die Erg\"anzungss\"atze zu den allgemeinen
  Reciprocit\"atsgesetzen}}, J. Reine Angew. Math., 44 (1852), pp.~93--146.

\bibitem{MauduitRivat2009}
{\sc C.~Mauduit and J.~Rivat}, {\em La somme des chiffres des carr\'es}, Acta
  Math., 203 (2009), pp.~107--148.

\bibitem{MauduitRivat2010}
\leavevmode\vrule height 2pt depth -1.6pt width 23pt, {\em Sur un probl\`eme de
  {G}elfond: la somme des chiffres des nombres premiers}, Ann. of Math. (2),
  171 (2010), pp.~1591--1646.

\bibitem{Moshe2007}
{\sc Y.~Moshe}, {\em On the subword complexity of {Thue}-{Morse} polynomial
  extractions}, Theor. Comput. Sci., 389 (2007), pp.~318--329.

\bibitem{MuellnerSpiegelhofer2017}
{\sc C.~M\"ullner and L.~Spiegelhofer}, {\em Normality of the {T}hue--{M}orse
  sequence along {P}iatetski-{S}hapiro sequences, {II}}, Israel J. Math., 220
  (2017), pp.~691--738.

\bibitem{OEIS}
{\sc {N. J. A. Sloane}}, {\em {T}he {O}n-{L}ine {E}ncyclopedia of {I}nteger
  {S}equences}.
\newblock published electronically at {\tt https://oeis.org}.

\bibitem{Singmaster1980}
{\sc D.~Singmaster}, {\em Divisibility of binomial and multinomial coefficients
  by primes and prime powers}, Fibonacci Assoc., Santa Clara, Calif., 1980.

\bibitem{Spiegelhofer2020}
{\sc L.~Spiegelhofer}, {\em The level of distribution of the {T}hue--{M}orse
  sequence}, Compos. Math., 156 (2020), pp.~2560--2587.

\bibitem{SpiegelhoferWallner2021}
{\sc L.~Spiegelhofer and M.~Wallner}, {\em The binary digits of $n+t$}, Ann.
  Sc. Norm. Super. Pisa, Cl. Sci. (5), 24 (2023), pp.~1--31.

\bibitem{Stoll2012}
{\sc T.~Stoll}, {\em The sum of digits of polynomial values in arithmetic
  progressions}, Funct. Approximatio, Comment. Math., 47 (2012), pp.~233--239.

\bibitem{Tao2012}
{\sc T.~Tao}, {\em Higher order {F}ourier analysis}, vol.~142 of Graduate
  Studies in Mathematics, American Mathematical Society, Providence, RI, 2012.

\bibitem{Vaaler1985}
{\sc J.~D. Vaaler}, {\em Some extremal functions in {F}ourier analysis}, Bull.
  Amer. Math. Soc. (N.S.), 12 (1985), pp.~183--216.

\end{thebibliography}

\smallskip
\begin{center}
\begin{tabular}{c}
Department Mathematics and Information Technology,\\
Montanuniversit\"at Leoben,\\
Franz-Josef-Strasse 18, 8700 Leoben, Austria\\
\texttt{lukas.spiegelhofer@unileoben.ac.at}\\
ORCID iD: \texttt{0000-0003-3552-603X}
\end{tabular}
\end{center}

\end{document}